\documentclass[11pt,reqno, a4paper]{amsart}

\usepackage{amssymb}

\usepackage[english]{babel}
\usepackage{mathrsfs}

\usepackage[dvipsnames,svgnames,x11names]{xcolor}
\usepackage[hyphens]{url}
\usepackage[colorlinks=true,linkcolor=Maroon,citecolor=blue,urlcolor=blue,hypertexnames=false,linktocpage]{hyperref}
\usepackage{bookmark}
\usepackage{amsmath,thmtools}
\usepackage{bm}
\usepackage{mathtools}
\mathtoolsset{showonlyrefs} 

\usepackage{fancyhdr}
\usepackage{esint}
\usepackage{enumerate}
\usepackage{enumitem} 

\usepackage[scr]{rsfso}
\usepackage{mathtools}
\usepackage{pictexwd,dcpic}
\usepackage{graphicx}

\newcommand{\Snp}{\mathbb S^{n+1}_+}
\newcommand{\ph}{\phi}
\newcommand{\dd}{\,d}

\def\avint{\mathop{\mathchoice{\,\rlap{-}\!\!\int}
		{\rlap{\raise.15em{\scriptstyle -}}\kern-.2em\int}
		{\rlap{\raise.09em{\scriptscriptstyle -}}\!\int}
		{\rlap{-}\!\int}}\nolimits}

\usepackage[labelfont=bf, font=small]{caption}

\newcounter{mgncount}

\declaretheorem[name=Theorem,numberwithin=section]{thm}

\declaretheorem[name=Lemma,sibling=thm]{lemma}
\declaretheorem[name=Proposition,sibling=thm]{prop}

\declaretheorem[name=Corollary,sibling=thm]{cor}
\declaretheorem[name=Assumption,style=definition,sibling=thm]{assum}

\numberwithin{equation}{section}

\newcommand{\bbR}{\mathbb{R}}

\newcommand{\bbS}{\mathbb{S}}

\newcommand{\ka}{\kappa}

\newcommand{\om}{\omega_n}

\newcommand{\Om}{\Omega}

\newcommand{\Ga}{\Gamma}

\usepackage{dsfont}

\newcommand{\onabla}{\overline{\nabla}} 

\newcommand{\pt}{ \partial_t} 
\newcommand{\dotF}{\dot{F}}

\newcommand{\avg}[1]{\langle #1 \rangle}
\def\restrict#1{\raise-.5ex\hbox{\ensuremath|}_{#1}}

\usepackage{scalerel}[2014/03/10]
\usepackage[usestackEOL]{stackengine}
\def\intavg{\,\ThisStyle{\ensurestackMath{%
			\stackinset{c}{0\LMpt}{c}{0\LMpt}{\SavedStyle-}{\SavedStyle\phantom{\int}}}%
		\setbox0=\hbox{$\SavedStyle\int\,$}\kern-\wd0}\int}

\usepackage{graphicx}

\numberwithin{equation}{section}

\usepackage{xcolor}
\definecolor{color-cites}{HTML}{9a2144}
\definecolor{brickred}{HTML}{e63d12}
\definecolor{royalblue}{HTML}{2054e3}
\definecolor{blavet}{HTML}{3adbcc}
\definecolor{verd}{HTML}{4a6741}

\usepackage{tikz}

\usepackage{cancel}
\usepackage{tcolorbox}

\begin{document}
	
	\title[QP-CF with horo-convexity on the sphere]{The quermassintegral preserving curvature flow \smallskip \\ for horo-convex hypersurfaces  in the sphere}

	\begin{abstract}
		We introduce fully nonlinear curvature flows of horo-convex hypersurfaces in the sphere that preserve an arbitrarily prescribed spherical quermassintegral. The non-local normalization is defined relative to a fixed ambient origin, and we prove that the evolution is governed by a single smooth fixed-origin equation for all time. 
		
		For monotone, homogeneous curvature functions satisfying concavity and inverse concavity, we establish preservation of horo-convexity, uniform curvature pinching, a direct estimate for the non-local coefficient. A Tso-type argument then yields global curvature bounds and long-time existence. We further prove exponential decay of the traceless second fundamental form and exponential $C^\infty$ convergence to the geodesic sphere centered at the fixed origin whose radius is determined by the preserved quermassintegral.
	\end{abstract}

	\author{Sara Albert-Niclòs}
	\address{\flushleft\parbox{\linewidth}{{\bf Sara Albert-Niclòs} \\Universitat de Val\`encia\\ Departament de Matem\`atiques\\ Av. Vicent Andrés Estellés~19\\ 46100 Burjassot\\ Spain\\ {\href{mailto:sara.albert@uv.es}{sara.albert@uv.es}}}}

	\author{Esther Cabezas-Rivas}
	\address{\flushleft\parbox{\linewidth}{{\bf Esther Cabezas-Rivas} \\Universitat de Val\`encia\\ Departament de Matem\`atiques\\ Av. Vicent Andrés Estellés~19\\ 46100 Burjassot\\ Spain\\ {\href{mailto:esther.cabezas-rivas@uv.es}{esther.cabezas-rivas@uv.es}}}}
	
	\author{Shujing Pan}
	\address{\flushleft\parbox{\linewidth}{{\bf Shujing Pan} \\Goethe-Universit\"at\\ Institut f\"ur Mathematik\\ Robert-Mayer-Str.~10\\ 60325 Frankfurt\\ Germany\\ {\href{mailto:pan@math.uni-frankfurt.de}{pan@math.uni-frankfurt.de}}}}

	\date{\today. This work was funded by the German Research Foundation (Deutsche Forschungsgemeinschaft,
		DFG) through the project “Curvature flows with local and nonlocal sources”, SCHE 1879/4-1.
		The first and second authors have been partially supported by project PID2022-136589NB-I00, funded by MCIN/AEI/10.13039/501100011033 and by FEDER, A way of making Europe. The first author has also been supported by the predoctoral grant PREP2022-000011 associated with this project. The first and second authors have also been partially supported by project CIAICO/2023/035, funded by the Conselleria d’Educació, Cultura, Universitats i Ocupació. 
	}

	 \subjclass[2020]{53C21, 53E10}
	 \keywords{volume-preserving mean curvature flow, fully nonlinear speed, spherical geometry, quermassintegrals}
	\maketitle

	\section{Introduction and main results}
	
	Constrained curvature flows provide a geometric mechanism for
	deforming hypersurfaces towards canonical shapes while
	preserving a prescribed global quantity. Their analysis requires a
	delicate balance between the local curvature speed and a non-local
	normalization, and this interaction becomes particularly subtle in
	positively curved ambient spaces.

	The classical model is Huisken's volume-preserving mean-curvature flow  in
	Euclidean space \cite{Huisken_1987_VPMCF}. Starting from a strictly convex hypersurface, the
	flow exists smoothly for all time, preserves the enclosed volume,
	decreases the surface area and converges to a round sphere. In particular, it gives
	a dynamical proof of the Euclidean isoperime\-tric inequality.  Huisken's theorem illustrates how preservation of convexity, global existence and convergence to a canonical shape can combine to yield geometric inequalities.
	
	In the sphere, however, the direct analogue of the volume-preserving mean curvature flow does not in general preserve convexity. As observed by Huisken, nearly totally geodesic portions lying close to the equator may be pushed across it by the non-local forcing term and thereby lose convexity. This obstruction is a genuinely spherical effect; indeed, in hyperbolic space it was shown by the second author and Miquel \cite{CaMi1} that the effects of the ambient curvature can be handled by replacing ordinary convexity with the more natural notion of horo-convexity.
	
	The second author and Scheuer addressed this spherical difficulty by introducing an origin-dependent radial weight in the non-local forcing \cite{CabezasRivasScheuer2024}. This factor vanishes at the equator and slows down the evolution precisely where convexity may be lost. Such a degeneracy nevertheless comes at a price: the same  weight occurs in the denominator defining the global coefficient, which may therefore become unbounded as the hypersurface approaches the equator.
	
	To overcome this, in \cite{CabezasRivasScheuer2024} the global coefficient is controlled by selecting suitable origins on successive time intervals. This produces quermassintegral-preserving evolutions with global existence and convergence to geodesic spheres, but requires finitely many recalibrations of the origin. Consequently, the evolution is smooth only between consecutive recalibrations and merely Lipschitz in time when the origin is changed. As a by-product, a monotonicity identity established for one origin need not persist across the complete evolution, restricting the use of the flow in deriving geometric inequalities.
	
The key idea of the present paper is to combine the radial mechanism of \cite{CabezasRivasScheuer2024} with the spherical horo-convexity introduced by the third author and Scheuer in \cite{PanScheuer2025}, as it is the direct counterpart of the notion of convexity that was successful in the hyperbolic setting, but adapted to the spherical geometry. This combination allows us to control the no-local term relative to one fixed origin and eliminates the need for recalibrations. We further replace the mean curvature by a broad class of fully nonlinear curvature functions, and obtain a single smooth quermassintegral-preserving evolution  on the whole
time interval. Besides giving global existence and exponential convergence, this provides a suitable framework for applications based on global monotonicity formulas and geometric inequalities.

	 Let $n \geq 2$, and let
	\(
	x_0:\mathbb S^n\longrightarrow\mathbb S^{n+1}_+
	\)
	be a smooth embedding whose image
	\(
	M_0:=x_0(\mathbb S^n) \subset\mathbb S^{n+1}_+
	\)
	is a smooth  and closed
	hypersurface contained in the open northern hemisphere $\mathbb S^{n+1}_+$ of the unit
	sphere with the round metric $\bar g$. We fix the north pole as an origin $\mathcal O$ and regard $\mathbb{S}^{n+1}_+$ as a warped product
	\begin{equation} \label{warped}
	\mathbb S^{n+1}_+
	=
	\left(0,\frac{\pi}{2}\right)\times\mathbb S^n,
	\qquad
	\bar g=dr^2+\phi^2(r)\sigma,
	\qquad
	\phi(r)=\sin r,
	\end{equation}
	where $r$ is the radial distance from $\mathcal O$ and $\sigma$ is the
	standard round metric on $\mathbb S^n$. We denote by $\Omega_0$ the domain enclosed by $M_0$.

	We consider a family of embeddings $x = x(\cdot,t)$ evolving by 
	\begin{equation} 
	\partial_t x = \left(\mu(t)\phi' - F\right)\nu,
	\label{eq:flow}
	\end{equation}
	with initial condition $x(\cdot,0)=x_0$. Here $\nu$ is the outward unit normal
	to $M_t:=x(\mathbb S^n,t)$, and $F$ is a symmetric function of the principal
	curvatures. With our sign convention, $-F\nu$ points inwards on strictly
	convex hypersurfaces (i.e. with $\kappa_1 \geq 0$, where $\kappa_1 \leq \cdots \leq \kappa_n$ denote the principal curvatures).
	
	For a fixed $\ell\in\{0,\ldots,n\}$, the global coefficient is defined
	by
	\begin{equation} \label{def-mu}
	\mu(t)
	:=
	\frac{
		\displaystyle\int_{M_t}F\sigma_\ell\,dV_t
	}{
		\displaystyle\int_{M_t}\phi'\sigma_\ell\,dV_t
	},
	\end{equation}
	where $\sigma_\ell$ denotes the $\ell$-th elementary symmetric
	polynomial of
	degree $\ell$ in the principal curvatures, and $d V_t$ represents the volume element of $M_t$.  By the first variation
	formula for the spherical quermassintegrals, this choice gives
	\(
	W_\ell(\Omega_t)=W_\ell(\Omega_0),
	\)
	where $\Omega_t$ is the domain enclosed by $M_t$. Thus \eqref{eq:flow} preserves any prescribed spherical quermassintegral. 
	
	As argued above, the natural geometric class for this problem is given by spherical
	horo-convex (or $h$-convex) hypersurfaces. In
	the present notation, horo-convexity is the tensor inequality
	\begin{equation}
	\phi' h\geq(1-u)g,
	\qquad
	u:=\langle\phi \, \partial_r,\nu\rangle,
	\label{def:horoconvex}
	\end{equation}
	where $h$ and $g$ are the second fundamental form and the induced metric, respectively,  and $u$ stands for the support function with respect to the fixed origin $\mathcal O$.

	We impose the following structural assumptions on the curvature
	function.
	\begin{assum} \label{ass:F}
		Let 
		\[{\Ga_{+} = \{\ka\in \bbR^{n} \, | \, \ka_{i}>0, \text{ for all } i=1, \ldots,n\},}\]
		be the positive cone, and consider a symmetric function 
		$F \in C^\infty(\Gamma_+)$. We assume that 
		\begin{enumerate}[label=(\roman*)]
			\item[(a)]  $F$ is strictly increasing in each argument;
			\item[(b)] \label{assum:F_homog_deg_1} $F$ is homogeneous of degree one;
			\item[(c)] $F$ is normalized so that $F(1,\dots,1)=n$;
			\item[(d)] $F$ is concave;
			\item[(e)] $F$ is inverse concave, i.e. the function $\kappa = (\kappa_i) \mapsto \frac{1}{F(\kappa_1^{-1},\dots,\kappa_n^{-1})}$
			is concave on $\Gamma_+$.
		\end{enumerate}
	\end{assum}

	Now we are in a position to state our main result:
	
	\begin{thm}\label{thm:main}
		Let $n\geq 2$, let $F$ satisfy Assumption~\ref{ass:F}, and $x_0: \mathbb S^n \rightarrow \bbS^{n+1}_+$  be a smooth embedding such that
		\(
		M_0:=x_0(\mathbb S^n)
		\)
		is strictly horo-convex and bounds a domain $\Omega_0$. Then, for every $\ell\in\{0,\ldots,n\}$, 
		\begin{enumerate}
		\item[{\rm (a)}] the problem \eqref{eq:flow}
		with $\mu$  as in \eqref{def-mu} admits a unique smooth
		solution
		\[
		x:\mathbb S^n\times[0,\infty)
		\longrightarrow
		\mathbb S^{n+1}_+,
		\] 
		with $x(\cdot,0) = x_0$.
		\item[{\rm (b)}] for every $t\geq 0$, the evolving hypersurfaces $M_t$ remain strictly horo-convex, and 
		\item[{\rm (c)}] as $t \to \infty$ $M_t$ converges exponentially fast in the $C^\infty$-topology to the
		geodesic sphere $S_{r_\infty}(\mathcal O)$ centered at the fixed origin,
		where the radius $r_\infty\in(0,\pi/2)$ is uniquely determined by
		\(
		W_\ell(B_{r_\infty}(\mathcal O))
		=
		W_\ell(\Omega_0).
		\)
		\end{enumerate}
	\end{thm}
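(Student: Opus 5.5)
The proof follows the standard scheme for constrained flows: short-time existence, a priori estimates that keep the flow in a compact class, and a convergence argument. Short-time existence and uniqueness come from the fact that, for strictly horo-convex (hence strictly convex) $M_0$, the operator $-F$ is uniformly parabolic. Assumption~\ref{ass:F}(a) gives this. The non-local term $\mu(t)\phi'$ is a lower-order perturbation, since $\phi'=\cos r>0$ in $\mathbb S^{n+1}_+$ and the denominator of \eqref{def-mu} is positive as long as $M_t$ stays in the open hemisphere. A fixed-point argument in parabolic Hölder spaces, as in Huisken's volume-preserving flow, handles this. Preservation of $W_\ell$ follows from the first variation $\frac{d}{dt}W_\ell=c_\ell\int_{M_t}\sigma_\ell(\mu\phi'-F)\,dV_t=0$.

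The core a priori estimates, in order, are these.
\begin{enumerate}
\item[(i)] Preservation of horo-convexity. I would compute the evolution of the tensor $S:=\phi' h-(1-u)g$ using the evolution equations for $h$, $u$ and $\phi'$ under \eqref{eq:flow}. I would then apply Hamilton's tensor maximum principle, using concavity and inverse concavity of $F$ to sign the gradient terms via Andrews' criterion. The terms involving $\mu$ should come out nonnegative, because the radial weight $\phi'$ makes $\mu\phi'\nu$ act like a conformal-type motion that is compatible with horo-convexity.
\item[(ii)] Containment. Horo-convex sets in $\mathbb S^{n+1}_+$ relative to $\mathcal O$ lie in a ball of radius bounded away from $\pi/2$. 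Together with the fixed value of $W_\ell$, this yields uniform inner and outer radii and $\phi'\ge c>0$ on $M_t$. Hence the denominator of $\mu$ is bounded below.
\item[(iii)] A direct bound $0<c\le\mu\le C$. This comes from comparing numerator and denominator, using (ii) and the pinching below, with no recalibration of the origin.
\item[(iv)] Uniform pinching $\kappa_n\le C\kappa_1$. For this I would apply the maximum principle to $G=\kappa_n/F$ or to a similar quotient, using inverse concavity.
\item[(v)] A Tso-type bound. I would consider $F/(u'-\varepsilon)$, where $u'$ is a support-type function positive by (ii), and show it is bounded above. By (iv) this bounds all curvatures.
\end{enumerate}
With two-sided curvature bounds, concavity of $F$ gives $C^{2,\alpha}$ estimates through Krylov–Safonov. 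Parabolic Schauder theory then gives all higher derivatives, and the solution extends to $[0,\infty)$, which proves (a) and (b).

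For (c), I would first show that a suitable monotone quantity, for instance a different quermassintegral or $\int u$-type functional that the flow decreases, has time derivative controlled by the traceless part $|\mathring h|^2$. This integrability, combined with the uniform estimates, gives subsequential convergence to umbilic hypersurfaces, i.e. geodesic spheres. To upgrade this to exponential decay of $|\mathring h|^2$, I would apply the maximum principle to $|\mathring h|^2/F^2$ (or to $\kappa_n-\kappa_1$) and exploit strict concavity at umbilic points. The center is then forced to be $\mathcal O$: for a sphere not centered at $\mathcal O$, $\phi'$ is nonconstant and $F$ is constant, so the speed $\mu\phi'-F$ is nonzero. Linearizing at $S_{r_\infty}(\mathcal O)$, or using the decay of $u$-oscillation, yields exponential convergence. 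The radius is fixed by $W_\ell$, and interpolation gives $C^\infty$ convergence.

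The main obstacle is step (i), together with (iii). The computation for $S$ mixes $\nabla u$, $\nabla\phi'$ and $\mu$, and one must verify that the bad spherical terms, those that break ordinary convexity near the equator, are exactly absorbed by the horo-convexity defect. This is where the choice of $u$ and $\phi'$ in \eqref{def:horoconvex} matters. I would first try the case $F=H$ to find the right grouping, then generalize using Andrews' inequality for concave and inverse-concave $F$.
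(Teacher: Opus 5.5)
The genuine gap is in step (ii). Horo-convexity together with a fixed value of $W_\ell$ does \emph{not} give $\phi'\ge c>0$ on $M_t$. Every geodesic sphere contained in the open hemisphere is strictly horo-convex: under the conformal identification it is a hyperbolic geodesic sphere, whose principal curvatures $\coth R$ exceed $1$. So spheres of any fixed radius, hence with any fixed $W_\ell$, can lie arbitrarily close to the equator. What you do get is an outer-radius bound $\rho_+(\Omega_t)\le\pi/2-d_2$, but that concerns some ball, not the distance from $\mathcal O$. The flow does not supply such a bound a priori either. The zeroth-order coefficient $\dot F^{ij}g_{ij}-\mu u$ in the evolution of $\phi'$ has no sign, and the paper only obtains $\phi'\ge c_T$ on finite intervals, which is enough for long-time existence; a uniform bound is available only a posteriori, from the convergence. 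So your lower bound for the denominator $\int_{M_t}\phi'\sigma_\ell\,dV_t$ is unproven. This is exactly the difficulty that forced the recalibrations of the origin in \cite{CabezasRivasScheuer2024}, and your steps (iii) and (v) rest on it.

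The missing idea is to use horo-convexity in integrated rather than pointwise form. Tracing \eqref{def:horoconvex} gives $\phi'H\ge n(1-u)$. Integrate this, and use $\int_M u\,dV=(n+1)\int_\Omega\phi'\,dV$. Then compare with the centred ball $B_r(\mathcal O)$ of the same volume (a rearrangement argument, since $\phi'$ decreases in $r$) and apply the isoperimetric inequality. This yields $\int_M\phi'H\,dV\ge n(1-\sin r)|\partial B_r(\mathcal O)|\ge c_1>0$, where $r$ is controlled by the radius bounds. Pinching and H\"older then pass this to $\sigma_\ell$, while Gauss--Bonnet bounds $\int_M\sigma_\ell\,dV$. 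This gives only $\mu\le C\max_{M_t}F$, not $\mu\le C$: for $\ell=n$ the integrand $F\sigma_n$ has degree $n+1$ in the curvatures, beyond what Gauss--Bonnet controls. That weaker bound suffices, because in the Tso argument the $\mu$-terms are quadratic against a cubic good term.

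Two steps in (c) also fail as written. First, $F$ need not be strictly concave ($F=H$ is allowed). The decay of $|\mathring A|$ should instead come from a time-dependent pinching $h_{ij}\ge\varepsilon(t)Hg_{ij}$ with $\varepsilon(t)\uparrow 1/n$, driven by the term $2F(1-n\varepsilon)g_{ij}$ that comes from the ambient curvature. Second, the fact that a noncentred sphere is not stationary does not by itself exclude it as a subsequential limit. You need a monotone quantity, for example $R(t)=\max_{M_t}r$ with $D^+R\le\mu\phi'(x_t)-F(x_t)$, combined with the asymptotics of $\mu$ and $F$. The exponential rate then follows from a maximum-principle argument for the radial graph over $S_{r_\infty}(\mathcal O)$.
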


	Theorem \ref{thm:main} shows that combining the radial correction with spherical horo-convexity provides the geometric structure needed to control the non-local coefficient relative to a single ambient origin. This mechanism distinguishes the present construction from a mere fully nonlinear extension of the  flow in \cite{CabezasRivasScheuer2024}: it removes all recalibrations and allows the same radial weight and the same evolution equation to be used throughout the entire process.

More specifically, Theorem~\ref{thm:main} extends \cite{CabezasRivasScheuer2024} in three directions. First, the mean curvature is replaced by a broad class of fully nonlinear curvature functions satisfying natural concavity and inverse-concavity conditions. Second, we derive a direct estimate for the non-local coefficient that produces a single smooth fixed-origin evolution on $[0,\infty)$. In addi\-tion, we establish exponential $C^\infty$-convergence to the final centered sphere, whereas the argument in \cite{CabezasRivasScheuer2024} does not provide an exponential rate towards that fixed limiting sphere.

The fixed-origin formulation also provides a framework for applications to geometric inequalities, since origin-dependent functionals admit evolution formulas along the entire flow and may therefore be used to investigate global weighted monotonicity formulas.  To the best of our knowledge, \eqref{eq:flow} is the first fully nonlinear non-local curvature flow in the sphere to combine preservation of an arbitrarily prescribed quermassintegral, a single smooth fixed-origin evolution for all time, and exponential convergence to the corresponding centered geodesic sphere.

	We briefly describe the main ingredients of the proof. The first 	difficulty is the preservation of spherical horo-convexity, which we encode as a tensorial inequality (see Section \ref{sec:S-tensor}). 
	For a fully nonlinear speed, the evolution of the relevant quantity contains terms involving
	the second derivatives of $F$ that must be combined with the correction
	arising from the smallest-eigenvalue formula at a hypothetical
	first null eigenvector. The inverse-concavity
	assumption provides the quadratic inequality needed to control this
	combination and preserve horo-convexity. A second tensor maximum-principle argument yields the
uniform curvature pinching estimate
	$\kappa_n\leq C_0\kappa_1$.
	
	The next issue is the control of the non-local coefficient. The
	pinching estimate, together with the preserved quermassintegral,
	gives uniform inner and outer radius bounds. We then establish
	weighted curvature-integral estimates and deduce
	\(
	\mu(t)
	\leq
	C\max_{M_t}F.
	\)
	This estimate is the central mechanism that allows the origin to
	remain fixed. In
	particular, it makes a
	Tso-type curvature argument  possible: at a spatial maximum of the
	auxiliary function, the positive contributions involving the
	non-local coefficient are at most quadratic, whereas the leading negative
	curvature term is cubic. This yields uniform bounds for the curvature
	and for $\mu$.
	
	These estimates prevent the formation of finite-time singularities and imply long-time existence.
	We then improve the curvature pinching to obtain exponential decay of
	the traceless second fundamental form. Quantitative
	almost-umbilical rigidity shows that the evolving hypersurfaces
	approach the family of geodesic spheres exponentially, while the
	preserved quermassintegral determines the limiting radius. This asymptotic spherical closeness, in turn,  yields uniform regularity of all orders.
	
	It remains to identify the center and obtain the final rate of
	convergence. Since the fixed origin is not assumed to lie inside the
	enclosed domains, the global radial parametrization used in
	\cite[Section~7]{CabezasRivasScheuer2024} is not available. We
	instead adapt the argument at a maximum point of the radial distance,
	which suffices to exclude every noncentered geodesic sphere as a
	subsequential limit. Once smooth convergence to the centered sphere
	has been established, we represent the hypersurfaces as radial graphs
	over this fixed limiting sphere. A maximum-principle argument
	gives exponential decay of the graph functions in $C^0$, and the
	uniform higher-order estimates and interpolation yield exponential
	convergence in $C^\infty$.

	The paper is organized as follows. In Section~\ref{sec:preliminaries}, we introduce  notational conventions, recall the spherical quermassintegrals and
	describe the geometric interpretation of 
	horo-convexity. Section~\ref{sec:evol} contains the identities and evolution
	equations used throughout the paper. In Section~\ref{sec:preservation-hconvexity}, we prove
	preservation of horo-convexity and establish the uniform curvature
	pinching estimate. Section~\ref{sec:geometric-estimates} is devoted to the radius estimates, the
	weighted curvature-integral bounds and the control of the non-local
	coefficient. In Section~\ref{sec:upper-curvature}, we obtain the uniform upper curvature
	estimate by a Tso-type argument. Finally, Section~\ref{sec:long-time-convergence} proves long-time
	existence, exponential almost-umbilicity, quantitative closeness to
	geodesic spheres and exponential $C^\infty$-convergence to the
	geodesic sphere centered at $\mathcal O$.

	\section{Notation, conventions and preliminary results}
	\label{sec:preliminaries}
	
	\subsection{Geometric conventions}

	Let
	\(
	x:M^n\longrightarrow\mathbb S^{n+1}_+
	\)
	be a smooth, closed and embedded hypersurface bounding a domain
	$\Omega$. We denote by $\nu$ its outward unit normal, by
	\(
	g=x^*\bar g
	\)
	the induced metric and by $\nabla$ its Levi-Civita connection. The
	ambient connection and curvature tensor are denoted by
	$\onabla$ and $\overline R$, respectively.
	
	Our convention for the second fundamental form $h$ is fixed by the Gauss formula
	\begin{align}
	\onabla_X Y = \nabla_X Y - h(X,Y)\nu, 
	\label{eq:Gauss_formula}
	\end{align}
	for vector fields $X,Y$ tangent to the hypersurface. Throughout this article, we adopt the sign curvature convention so that for $\mathbb S^{n+1}$ it holds
	\begin{equation} \label{ambient-R}
	\bar R_{\alpha\beta\gamma\delta}
	=
	\bar g_{\alpha\gamma}\bar g_{\beta\delta}
	-
	\bar g_{\alpha\delta}\bar g_{\beta\gamma}.
	\end{equation}

	We recall that in the warped product model \eqref{warped} the radial vector field $\phi\,\partial_r$ is a conformal Killing field satisfying
	\begin{equation}
	\onabla_X(\phi \,\partial_r)=\phi'\,X
	\label{eq:conformalfield}
	\end{equation}
	for every ambient vector field $X$ (see \cite[Lemma 2.1]{GuanLi2015}).

	We write
	\[
	h_i^{j}:=g^{jk}h_{ik},
	\qquad
	(h^2)_i^{j}:=h_i^{k}h_k^{j},
	\]
	and denote the Weingarten operator by
	\(
	A=(h_i^{j}).
	\)
	Its eigenvalues are the principal curvatures, ordered as
	\(
	\kappa_1\leq\cdots\leq\kappa_n.
	\)
	The mean curvature is
	\(
	H:=\operatorname{tr}_gA
	=\sum_{i=1}^n\kappa_i.
	\)
	Finally,  the traceless second fundamental form is
	\(
	\mathring h_{ij}
	:=
	h_{ij}-\frac{H}{n}g_{ij}.
	\)
	
	Hereafter, indices are raised and lowered with the induced metric $g$ and we use the Einstein sum convention that repeated
	indices above and below are summed from $1$ to $n$. Unless otherwise stated, all
	tensor norms and covariant derivatives are computed with respect to
	$g$.

	\subsection{Geometric interpretation of horo-convexity} 
	The novel notion of spherical horo-convexity was introduced in
	\cite{PanScheuer2025}. Its original motivation comes from the
	geometric characterization proved in
	\cite[Proposition~2.1]{PanScheuer2025}. Namely, geodesic spheres
	contained in the northern hemisphere
	$\mathbb S^{n+1}_+$ and tangent to the equator
	$\partial\mathbb S^{n+1}_+$ satisfy equality in \eqref{def:horoconvex}.
	
	These equator-tangent geodesic spheres are regarded as the spherical
	counterparts of hyperbolic horospheres. Accordingly, \eqref{def:horoconvex}
	can be interpreted as a one-sided curvature condition with respect to
	this distinguished family of model hypersurfaces. A hypersurface
	satisfying \eqref{def:horoconvex} is called horo-convex, or
	$h$-convex for short.
	
	The terminology also admits a natural conformal interpretation.
	Indeed, the spherical horo-convexity condition arises naturally by
	transporting the usual hyperbolic notion through the conformal
	identification induced by stereographic projection. In the Euclidean
	unit-ball model, the hyperbolic and spherical metrics are
	\[
	\bar g_{\mathbb H}
	=
	\frac{4}{(1-|x|^2)^2}g_{\mathbb R^{n+1}},
	\qquad
	\bar g_{\mathbb S}
	=
	\frac{4}{(1+|x|^2)^2}g_{\mathbb R^{n+1}},
	\]
	so that
	\[
	\bar g_{\mathbb S}
	=
	(\phi')^2\bar g_{\mathbb H},
	\qquad
	\phi'(r)=\cos r
	=
	\frac{1-|x|^2}{1+|x|^2}.
	\]
	Thus the conformal factor relating the two ambient metrics is exactly
	the radial function $\phi'$ appearing in
	\eqref{def:horoconvex}. Under this conformal change, the transformation law for the
	second fundamental form introduces the normal derivative of the
	conformal factor. Then the hyperbolic horo-convexity inequality
	$h_{\mathbb H}\geq g_{\mathbb H}$ transforms precisely into \eqref{def:horoconvex}.

	This conformal correspondence is reflected by the model
	hypersurfaces: hyperbolic horospheres, represented in the ball model
	by Euclidean spheres tangent to the ideal boundary, correspond under
	stereographic projection to geodesic spheres in the northern
	hemisphere tangent to the equator. Thus the equality models identified
	in \cite[Proposition~2.1]{PanScheuer2025} are the spherical images of
	hyperbolic horospheres, and \eqref{def:horoconvex} may be viewed as
	the conformal spherical analogue of hyperbolic horo-convexity.
	
	\begin{figure}[htbp]
		\centering
		\includegraphics[width=0.55\textwidth, trim=0cm 2cm 0 6cm,
		clip]{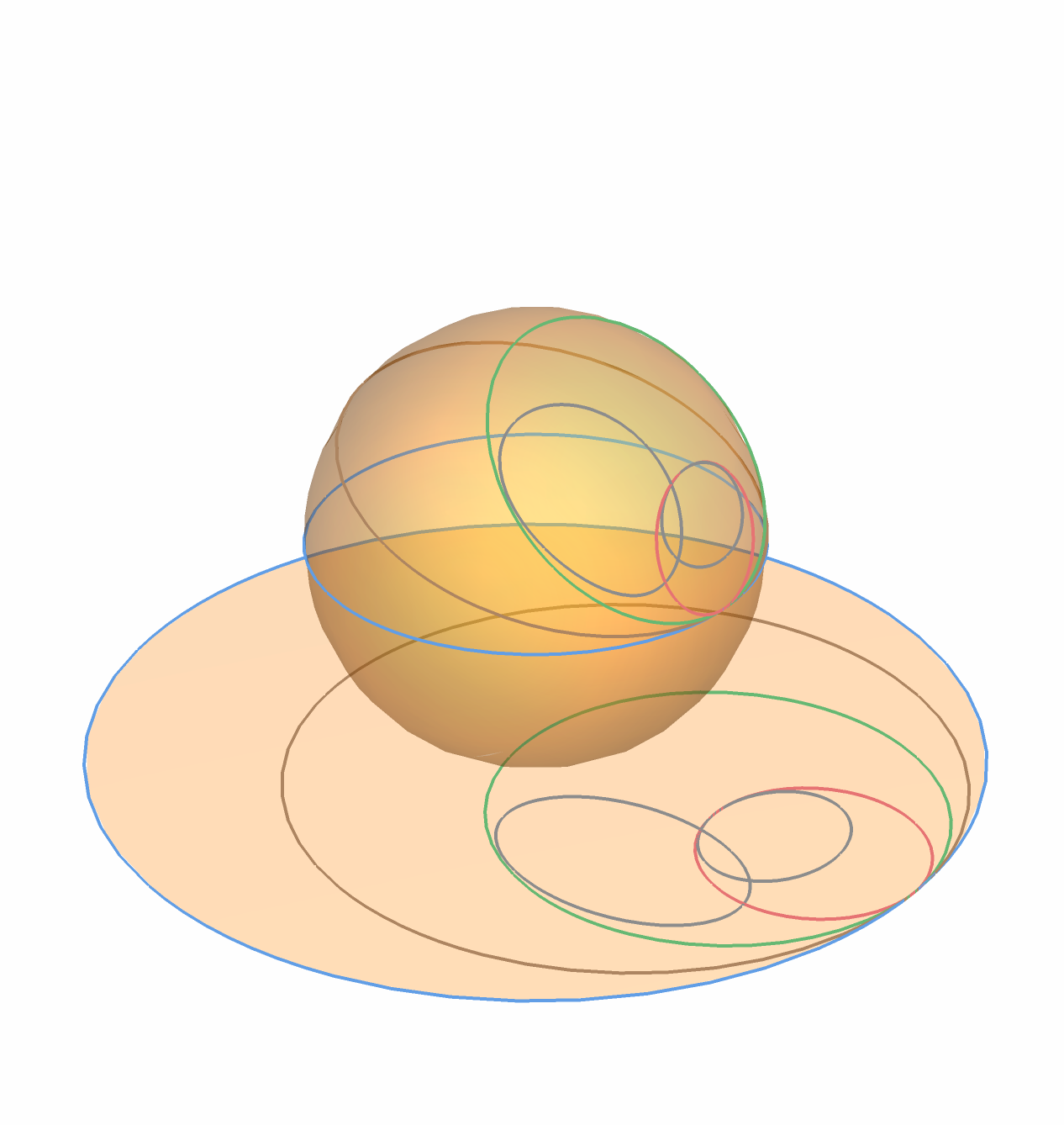}
		\caption{Illustration of the conformal transformation between the definitions of h-convexity in the sphere and the hyperbolic space.}
		\label{fig:nom}
	\end{figure}

	For $k=0,\ldots,n$, let $\sigma_k$ denote the unnormalized
	elementary symmetric polynomial of degree $k$, that is, 
	\[
	\sigma_0:=1,
	\qquad
	\sigma_k(\kappa)
	:=
	\sum_{1\leq i_1<\cdots<i_k\leq n}
	\kappa_{i_1}\cdots\kappa_{i_k}.
	\]
	In particular,
	\(
	\sigma_1=H.
	\) In turn, the normalized elementary symmetric polynomials are 
	\[
	H_k
	:=
	\binom{n}{k}^{-1}\sigma_k.
	\]
	Thus $H_0=1$, and $H_1=\frac{H}{n}$. 
	Throughout the paper, $\sigma_k$ denotes the unnormalized function,
	whereas $H_k$ denotes its normalized counterpart.

	We regard $F$ as a function of the Weingarten operator,
	\[
	F=F(h_i^{j}),
	\]
	or equivalently as a function of $h_{ij}$ and $g_{ij}$. We denote its
	first and second derivatives by
	\[
	\dot F^{ij}
	:=
	\frac{\partial F}{\partial h_{ij}},
	\qquad
	\ddot F^{ij,kl}
	:=
	\frac{\partial^2F}
	{\partial h_{ij}\partial h_{kl}}.
	\]
	At a point at which the Weingarten operator is diagonal, we also write
	\(
	\dot F^i
	:=
	\frac{\partial F}{\partial\kappa_i}.
	\)
	In an orthonormal frame that diagonalizes $h$, it holds
	\(
	\dot F^{ij}=\dot F^i\delta^{ij}.
	\)
	
	The linearized operator associated with $F$ is
	\[
	\Delta_{\dot F}
	:=
	\dot F^{ij}\nabla_i\nabla_j.
	\]
	Strict monotonicity of $F$ implies
	$
	\dot F^i>0
	$
	on $\Gamma_+$, and hence $\Delta_{\dot F}$ is elliptic.
	
	Euler's identity for degree-one homogeneous functions gives
	\begin{equation}\label{eq:Euler-F}
	\dot F^{ij}h_{ij}=F.
	\end{equation}
	Moreover, the first derivatives of $F$ are homogeneous of degree
	zero: $\dot F^i(\lambda\kappa)=\dot F^i(\kappa)$, for any $\lambda>0$.

	We refer to \cite[Section 2.1]{Gerhardt2006} for further details about symmetric curvature functions.

	\subsection{Spherical quermassintegrals}
	
	For a convex domain
	$\Omega\subset\mathbb S^{n+1}$ with smooth boundary, we denote its spherical
	quermassintegrals by $W_0(\Omega),\ldots,W_{n+1}(\Omega)$,
	using the normalization adopted in
	\cite[Section~2]{CabezasRivasScheuer2024}. In particular,
	\[
	W_0(\Omega)=|\Omega|,
	\qquad
	W_1(\Omega)=\frac{|\partial\Omega|}{n+1}.
	\]
	
	For a normal variation $\partial_tx=\mathscr F \nu$,
	the first variation of $W_\ell$ has the form
	\begin{equation}\label{eq:first-variation-Wl}
	\frac{d}{dt}W_\ell(\Omega_t)
	=
	c_{n,\ell}
	\int_{M_t}\mathscr F\,\sigma_\ell\,dV_t,
	\end{equation}
	where $c_{n,\ell}>0$ depends only on the normalization. In
	particular, substitution of \eqref{eq:flow} and
	\eqref{def-mu} gives
	\[
	\frac{d}{dt}W_\ell(\Omega_t)=0.
	\]
	
	We recall the monotonicity of the spherical quermassintegrals with
	respect to inclusion:
	\[
	\Omega_1\subset\Omega_2
	\quad\Longrightarrow\quad
	W_k(\Omega_1)\leq W_k(\Omega_2).
	\]

	\subsection{Radii and geometric distances}
	
	For a convex domain
	$\Omega\subset\mathbb S^{n+1}$, the inner radius is
	\[
	\rho_-(\Omega)
	:=
	\sup\left\{
	\rho>0:
	B_\rho(p)\subset\Omega
	\text{ for some }p\in\Omega
	\right\},
	\]
	and the outer radius is
	\[
	\rho_+(\Omega)
	:=
	\inf\left\{
	\rho>0:
	\Omega\subset B_\rho(p)
	\text{ for some }p\in\mathbb S^{n+1}
	\right\}.
	\]
	A point realizing the inner radius is called an inball center. We
	write
	\(
	S_\rho(q):=\partial B_\rho(q)
	\)
	for the geodesic sphere of radius $\rho$ centered at $q$.
	
	For compact subsets
	$K,L\subset\mathbb S^{n+1}$, their Hausdorff distance with respect to
	the spherical metric is
	\[
	d_{\mathcal H}(K,L)
	:=
	\max\left\{
	\sup_{x\in K}
	\operatorname{dist}_{\mathbb S^{n+1}}(x,L),
	\sup_{y\in L}
	\operatorname{dist}_{\mathbb S^{n+1}}(y,K)
	\right\},
	\]
	where
	$\operatorname{dist}_{\mathbb S^{n+1}}(x,L)
	:=
	\inf_{y\in L}
	\operatorname{dist}_{\mathbb S^{n+1}}(x,y).$

	We shall use the elementary estimate
	$
	\left|
	\operatorname{diam}K-\operatorname{diam}L
	\right|
	\leq
	2d_{\mathcal H}(K,L).
	$
	
	\subsection{Final conventions and short-time existence}
	
	A hypersurface, or a solution of the flow, is called admissible if
	its principal curvature vector belongs to $\Gamma_+$ at every point.
	In the present setting, admissibility is equivalent to strict
	convexity.
	
	Since the initial hypersurface is strictly horo-convex, its principal
	curvatures belong to $\Gamma_+$. By Assumption~\ref{ass:F},
	\[
	\dot F^i>0
	\]
	on $\Gamma_+$, so the local part of \eqref{eq:flow} defines a
	strictly parabolic operator. 
	
	Moreover, the non-local term is well defined near the initial
	embedding. Indeed, since
	$M_0\subset\mathbb S^{n+1}_+$, we have
	$\phi'>0$ on
	$M_0$; in turn,
	strict horo-convexity implies strict convexity, hence
	$\kappa\in\Gamma_+$ and therefore $\sigma_\ell>0$.
	Then
	$
	\int_{M_0}\phi'\sigma_\ell\,dV_0>0.
	$
	By continuity, this denominator remains positive for sufficiently
	small time. Hence $\mu$ defines a smooth non-local functional in a
	neighbourhood of the initial embedding.
	
	The factor $\phi'(r)$ 
	depends only on the position of the hypersurface relative to the
	fixed origin $\mathcal O$. It does not involve the second fundamental form and
	therefore does not contribute to the principal symbol of the
	equation. Likewise, the term $\mu(t)$ depends only on time
	and does not affect parabolicity. 
	
	Therefore, after fixing the tangential parametrization in the
	standard way, \eqref{eq:flow} is a fully nonlinear, strictly
	parabolic equation with a smooth non-local lower-order term. Standard
	short-time existence theory (see \cite{HuiskenPolden1999} or \cite[Section 2.5]{Gerhardt2006}) yields a unique smooth solution on a
	maximal time interval
	$
	[0,T)$, with $T \in(0,\infty]$.

	In the following, we always assume that the flow is defined on its maximal time interval $[0,T)$, unless stated otherwise.
	
	Additionally, $C$ typically denotes a positive constant which may
	change from line to line. It is independent of time and of the point
	on the evolving hypersurface, and may depend on the dimension, the
	initial hypersurface, the fixed curvature function $F$ and the
	preserved quermassintegral. Dependence on $F$ refers only to the fixed structural constants of the
	curvature function, such as bounds for its derivatives on compact
	subsets of $\Gamma_+$ determined by the pinching estimate.
	
	Constants whose precise dependence is relevant will
	be distinguished by subscripts.

	\section{Some useful identities and evolution equations} \label{sec:evol}

	\begin{lemma}[Space derivatives of $\phi'$ and the support function]\label{lemma:deriv-u}
		The support function $u$ and the radial weight $\phi'$ satisfy
		\begin{enumerate}[label=\textup{(\alph*)}]
			\item $\nabla_i u = - h_i^k\nabla_k \phi'$, \smallskip
			\item \label{item:F_Hessian_u_sup_funct} $\Delta_{\dot F} u  = F\phi' -u\dotF^{ij} h^2_{ij} + \phi\,\avg{\partial_r,\nabla F}$, \smallskip
			\item \label{item:norm_grad_cos}$|\nabla\phi'|^2 = \phi^2 |\nabla r|^2=   \phi^2-u^2$.\smallskip
			\item  $\Delta_{\dot F} \phi' = F u- \phi'\, \dotF^{ij} g_{ij}. $
		\end{enumerate}
		
		\begin{proof}
			The first equation can be found in \cite[Lemma 2.6]{GuanLi2015}, where one also finds that
			\[\nabla_i \nabla_j \, u= -g^{k\ell}\nabla_k h_{ij} \nabla_\ell \phi'  + \phi' h_{ij} -u h^2_{ij},\]
			Then (b) follows by contracting the latter with $\dotF^{ij}$, using \eqref{eq:Euler-F}  and realizing that
			\begin{align*}
			-g^{k\ell}\dot{F}^{ij}\nabla_k h_{ij} \nabla_\ell \phi' = -g^{k\ell}\ \nabla_k F \phi'' \nabla_\ell r = \phi\,\avg{\partial_r,\nabla F}.
			\end{align*}
			
			The last identity follows from the relation
			$\nabla \phi'=-\phi\nabla r = -\phi \, \partial_r^\top$. Indeed, we have
			\[
			|\nabla\phi'|^2
			=\phi^2|\nabla r|^2 = \phi^2 \big(1- \avg{\partial_r,\nu}^2\big)
			=\phi^2\left(1-\frac{u^2}{\phi^2}\right)
			=\phi^2-u^2.
			\]
			Finally, (d) follows from \cite[Lemma 2.2]{GuanLi2015}, again after contraction with $\dot F$ and \eqref{eq:Euler-F}.
		\end{proof}
	\end{lemma}

	We next derive the evolution formulas that will be relevant throughout the paper.
	
	\begin{lemma}[Evolution equations on $\mathbb{S}^{n+1}$] \label{evol-eq}
		Under $\pt x=\mathfrak{F} \nu = (\mu(t) \phi' \, - F)\nu$, we have
		
		\begin{enumerate}[label=\textup{(\alph*)}]
			\item $\pt g_{ij}=2\mathfrak{F}  h_{ij} $
			\smallskip
			\item \label{item:evolmetricainv} $\pt g^{ij}=  -2\mathfrak{F} h^{ij}$
			\smallskip
			\item \label{item:evol_u_F} $\partial_t u =\Delta_{\dot F} u  -2F\phi' + u\dotF^{ij} h^2_{ij}+ \mu - \mu u^2$.
			\smallskip
			\item  \label{item:evol_cosr_F} $\pt \phi' \, = \Delta_{\dot F} \phi' \, + \big( \dotF^{ij}g_{ij} - \mu u\big) \phi' \,$
			\medskip
			\item \label{item:evol_hij_F}
			$\displaystyle
			\begin{aligned}[t]
			\partial_t h_{ij} &=
			\Delta_{\dot F} h_{ij}
			+\ddot F^{kl,pq}\nabla_i h_{pq}\nabla_j h_{kl}
			+\left(\mu \phi'-2F\right) h^2_{ij} \\
			&\quad
			+\bigl(
			\dot F^{kl} h^2_{kl}
			-\dot F^{kl}g_{kl}
			-\mu u
			\bigr)h_{ij}
			+2Fg_{ij}.
			\end{aligned}
			$
			\smallskip
			\item
			$\displaystyle
			\begin{aligned}[t]
			\partial_t h_i^j
			={}&  \Delta_{\dot F} h_i^j
			+\ddot F^{kl,pq}\nabla_i h_{pq}\nabla^j h_{kl}
			-\mu \phi' (h^2)_i^j +\bigl(\dot F^{kl} h^2_{kl}-\dot F^{kl}g_{kl}-\mu u\bigr)h_i^j
			+2F\delta_i^j.
			\label{eq:mixed-h-evolution}
			\end{aligned}$ 
		\end{enumerate}

		\begin{proof}
			The first two evolution equations can be deduced from \eqref{eq:flow}
			by a direct computation as it was done in \cite{Huisken84}. We now prove \ref{item:evol_u_F}. Since $u=\langle\phi\,\partial_r,\nu\rangle$, differentiating  yields
			\[
			\partial_t u
			=\langle\onabla_t(\phi\,\partial_r),\nu\rangle
			+\langle\phi\,\partial_r,\onabla_t\nu\rangle.
			\]
			
			Using that $\onabla_t\nu=-\nabla\mathfrak F$ together with
			$\onabla_\nu(\phi\,\partial_r)=\phi'\nu$ (see
			\eqref{eq:conformalfield}), we obtain
			\[
			\begin{aligned}
			\partial_tu
			&=\mathfrak F\phi'
			-\langle\phi\,\partial_r,\nabla\mathfrak F\rangle=(\mu\phi'-F)\phi'
			+\langle\phi\,\partial_r,\nabla F-\mu\nabla\phi'\rangle.
			\end{aligned}
			\]
			Subtracting from this the expression for $\Delta_{\dotF} u$ given in Lemma~\ref{lemma:deriv-u}, we obtain
			\[
			\partial_tu-\Delta_{\dot F}u
			=
			(\mu\phi'-2F)\phi'
			+u\dot F^{ij} h^2_{ij}
			+\mu |\nabla\phi'|^2,
			\]
			and the conclusion follows from Lemma~\ref{lemma:deriv-u} (c).

			We now prove \ref{item:evol_cosr_F}. By the flow equation, we have
			\begin{equation}
			\pt \phi' \, = \phi'' \partial_t r = - \phi \, \mathfrak{F}  \avg{ \nu, \partial_r} = (F -\mu \phi') u, \end{equation}
			hence the conclusion follows from Lemma \ref{lemma:deriv-u} (d). 
			
			We now turn to the proof of \ref{item:evol_hij_F}. We begin with the evolution equation for $h_{ij}$ given by \cite[Theorem~3.2(iv) and Corollary~3.3]{HuiskenPolden1999}.  To explain the
			ambient-curvature notation in that formula, fix a point of the hypersurface
			and choose an  orthonormal frame
			\[
			e_0:=\nu,
			\quad
			e_1,\ldots,e_n\in TM.
			\]
			There holds
			\begin{align*}
			\partial_t h_{ij}
			={}&
			\Delta_{\dot F}h_{ij}
			+
			\ddot F^{kl,pq}\nabla_i h_{kl}\nabla_j h_{pq}
			-
			\mu \nabla_i\nabla_j\phi'       +
			(\mu \phi'-F)
			\left(
			h^2_{ij}-\bar R_{0i0j}
			\right)                               \\
			&+
			\dot F^{kl}
			\left(
			-h_{kl} h^2_{ij}
			+
			h^2_{kj}h_{il}
			-
			h_{kj} h^2_{il}
			+
			h^2_{kl}h_{ij}
			\right)                                                        \\
			&-
			\dot F^{kl}
			\left(
			\bar R_{kli m}h_j^m
			+
			\bar R_{klj m}h_i^m
			+
			\bar R_{mij l}h_k^m
			+
			\bar R_{0i0j}h_{kl}
			-
			\bar R_{0k0l}h_{ij}
			+
			\bar R_{m l j k}h_i^m
			\right).                                                      
			\end{align*}
			We now simplify the cubic terms in \(h\), denoted as $A_{ij}$. Euler's formula \eqref{eq:Euler-F} gives
			\begin{align*}
			A_{ij}
			=
			-F h^2_{ij}
			+
			\dot F^{kl} h^2_{kj}h_{il}
			-
			\dot F^{kl}h_{kj} h^2_{il}
			+
			\dot F^{kl} h^2_{kl}h_{ij} =
			-F h^2_{ij}
			+
			\dot F^{kl} h^2_{kl}h_{ij},
			\end{align*}
			where	the two middle terms cancel because at a fixed point, if we choose an
			orthonormal frame diagonalizing \(h\), both are equal to $\dot F^\ell\kappa_\ell^3\delta_{ij}$ with opposite signs. 
			
			We next simplify the ambient curvature terms $C_{ij}$ from the last line. From \eqref{ambient-R} we get expressions of the form 
			\[	\bar R_{kli m}h_j{}^m
			=
			g_{ki}h_{lj}-g_{li}h_{kj},
			\qquad \text{and} \qquad
			\bar R_{0k0l}=g_{kl}.\]
			Substituting them into the curvature block and cancelling the
			opposite terms gives
			\[
			C_{ij}
			=
			-\dot F^{kl}
			\left\{
			g_{kj}h_{il}
			+
			g_{kl}h_{ij}
			-
			g_{lj}h_{ik}
			\right\} + F g_{ij} =
			-\dot F^{kl}g_{kl}h_{ij}
			+
			F g_{ij},
			\]
			where we have used the symmetry of $\dot F$. Finally, since \(\bar R_{0i0j}=g_{ij}\), we have
			\begin{align*}
			(\mu \phi'-F)
			\left(
			h^2_{ij}-\bar R_{0i0j}
			\right) -\mu \nabla_i\nabla_j\phi'                                                      
			&=
			(\mu \phi'-F) h^2_{ij}
			-
			\mu u h_{ij}
			+
			F g_{ij}.
			\end{align*}
			Collecting all the above computations, the desired formula follows.
			
			From $h^i_j=  g^{ik}h_{kj}$ and the evolution equation in (b), we get
			\[
			\partial_t h_i^j
			=g^{jk}\partial_t h_{ik}-2(\mu\phi'-F)(h^2)_i^j.
			\]
			Consequently, comparing with (e), apart from raising indices the only change is that the total coefficient of $(h^2)_i^j$ is $(\mu\phi'-2F)-2(\mu\phi'-F)=-\mu\phi'$, which leads to (f).
		\end{proof}
	\end{lemma}
	
	\section{Preservation of horo-convexity and curvature pinching}\label{sec:preservation-hconvexity}
	
	\subsection{Evolution of the horo-convexity tensor} \label{sec:S-tensor}
	
	We now introduce the tensor whose nonnegativity is equivalent to
	horo-convexity.  Define
	\begin{equation}\label{eq:def-S}
	S_i^j:=\phi'h_i^j+(u-1)\delta_i^j.
	\end{equation}
	Thus horo-convexity is equivalent to $S_i^j\geq0$. Since $\phi'>0$, the relation between its eigenvalues
	and the principal curvatures can be used without degeneracy.
	
	\begin{lemma}[Evolution of $S_i{}^j$]\label{lem:S-evolution}
		The tensor $S_i{}^j$ satisfies
		\begin{align}
		(\partial_t- \Delta_{\dot F})S_i^j
		={}&\phi'\ddot F^{kl,pq}
		\nabla_i h_{pq}\nabla^j h_{kl}
		-2\dot F^{kl}\nabla_k\phi'\nabla_lh_i^j
		\notag\\
		&-\mu (S^2)_i^j
		+(\dot F^{kl} h^2_{kl}-2\mu)S_i^j
		+\dot F^{kl} h^2_{kl}\delta_i^j.
		\end{align}
	\end{lemma}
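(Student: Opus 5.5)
The plan is a direct computation. Write $S_i^j=\phi'h_i^j+(u-1)\delta_i^j$ and apply the heat-type operator $\partial_t-\Delta_{\dot F}$ to each summand. Then substitute the evolution equations of Lemma~\ref{evol-eq}: part (d) for $\phi'$, part (f) for $h_i^j$ and part (c) for $u$. Since $\delta_i^j$ is parallel and time-independent, only the term $u\,\delta_i^j$ contributes from the second summand.

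For the product $\phi'h_i^j$, the Leibniz rule for the second-order operator $\Delta_{\dot F}=\dot F^{kl}\nabla_k\nabla_l$ gives
\[
(\partial_t-\Delta_{\dot F})(\phi'h_i^j)=h_i^j(\partial_t-\Delta_{\dot F})\phi'+\phi'(\partial_t-\Delta_{\dot F})h_i^j-2\dot F^{kl}\nabla_k\phi'\nabla_lh_i^j.
\]
The last term is the gradient cross term in the statement. In $h_i^j(\partial_t-\Delta_{\dot F})\phi'$, the term $\dot F^{kl}g_{kl}\phi'h_i^j$ cancels against the corresponding term from (f). Collecting the rest, I expect
\[
\phi'\ddot F^{kl,pq}\nabla_ih_{pq}\nabla^jh_{kl}-\mu(\phi'h)^2{}_i^j+(\dot F^{kl}h^2_{kl}-2\mu u)\phi'h_i^j+2F\phi'\delta_i^j .
\]
Adding $(\partial_t-\Delta_{\dot F})u\,\delta_i^j=(-2F\phi'+u\dot F^{kl}h^2_{kl}+\mu-\mu u^2)\delta_i^j$ should cancel the $2F\phi'\delta_i^j$ terms.

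Next I will eliminate $\phi'h$ in favour of $S$ by substituting $\phi'h=S-(u-1)\delta$. This gives
\[
(\phi'h)^2=S^2-2(u-1)S+(u-1)^2\delta .
\]
The coefficient of $S_i^j$ then becomes
\[
\dot F^{kl}h^2_{kl}-2\mu u+2\mu(u-1)=\dot F^{kl}h^2_{kl}-2\mu,
\]
and the quadratic term is $-\mu(S^2)_i^j$, both as claimed.

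The main step to check is the bookkeeping of the $\delta_i^j$ terms. The terms involving $\dot F^{kl}h^2_{kl}$ are $-(u-1)\dot F^{kl}h^2_{kl}+u\dot F^{kl}h^2_{kl}=\dot F^{kl}h^2_{kl}$. The $\mu$-terms are
\[
\mu\bigl[-(u-1)^2+2u(u-1)+1-u^2\bigr].
\]
Factoring out $(1-u)$, this equals $\mu(1-u)\bigl[-(1-u)-2u+1+u\bigr]=0$. This cancellation is exactly where the choice of radial weight $\phi'$ in the flow is used. It leaves precisely $\dot F^{kl}h^2_{kl}\delta_i^j$, which completes the identity.
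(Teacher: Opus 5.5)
Your proposal is correct and is essentially the paper's proof. You apply the product rule, substitute parts (c), (d) and (f) of Lemma~\ref{evol-eq}, and rewrite $\phi' h = S + (1-u)\delta$ to collect the $S^2$, $S$ and $\delta$ coefficients, exactly as the paper does. The vanishing $\mu$-terms in the $\delta$ coefficient, which you factor through $(1-u)$, appear in the paper as $\mu\bigl(1-(v+u)^2\bigr)=0$ with $v=1-u$.
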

	
	\begin{proof}

		Applying the product rule for $ \Delta_{\dot F}$ to \eqref{eq:def-S}, we obtain
		\begin{align}
		(\partial_t- \Delta_{\dot F})S_i^j
		= \phi'(\partial_t- \Delta_{\dot F})h_i^j
		+h_i^j(\partial_t- \Delta_{\dot F})\phi' 
		+(\partial_t- \Delta_{\dot F})u\,\delta_i^j
		-2\dot F^{k\ell}\nabla_k\phi'\nabla_\ell h_i^j.
		\label{eq:S-product-rule}
		\end{align}
		Substituting here  the evolution equations (c), (d) and (f) from Lemma \ref{evol-eq}, after straightforward cancellation of terms of opposite sign and rearranging, yields
		\begin{align*}
		(\partial_t- \Delta_{\dot F})S_i^j
		={}&\phi'\ddot F^{k\ell,pq}
		\nabla_i h_{pq}\nabla^j h_{k\ell}
		-2\dot F^{k\ell}\nabla_k\phi'\nabla_\ell h_i{}^j\\
		&-\mu(\phi')^2(h^2)_i^j
		+\phi'(\dot F^{k\ell} h^2_{k\ell}- 2\mu u)h_i^j
		+(u \dot F^{k\ell} h^2_{k\ell} +\mu-\mu u^2)\delta_i^j.
		\end{align*}
		Setting $v:=1-u$, from \eqref{eq:def-S} we have $\phi'h_i^j=S_i^j+v\delta_i^j$
		and hence
		\[
		(\phi')^2(h^2)_i^j
		=(S^2)_i^j+2vS_i{}^j+v^2\delta_i^j.
		\]
		The coefficient of $S_i^j$ therefore equals
		\[
		-2\mu v+\dot F^{k\ell} h^2_{k\ell}-2\mu u=\dot F^{k\ell} h^2_{k\ell}-2\mu,
		\]
		whereas the coefficient of $\delta_i^j$ is
		\begin{align*}
		-\mu v^2+v(\dot F^{k\ell} h^2_{k\ell}-2\mu u)+u\dot F^{k\ell} h^2_{k\ell}+\mu-\mu u^2
		&=\dot F^{k\ell} h^2_{k\ell}+\mu\bigl(1-(v+u)^2\bigr)=\dot F^{k\ell} h^2_{k\ell}.
		\end{align*}
		This leads to the evolution equation in the statement.
	\end{proof}
	
	\subsection{Preservation of horo-convexity}
	In this section, we prove the main key to our results, namely the preservation of horo-convexity.
	
	First, recall that the initial hypersurface lies in the northern hemisphere $\mathbb{S}^{n+1}_{+}$. We next verify that the evolving hypersurfaces remain in the open
	northern hemisphere. The argument is the fully nonlinear analogue of
	\cite[Corollary~4.4]{CabezasRivasScheuer2024}. We include the details
	needed to justify the application of the strong maximum principle,
	since the zeroth-order coefficient contains the non-local term
	$\mu(t)$.
	
	\begin{prop}[Hemisphere preservation]\label{prop:cos_remains_positive}
		Let $M_t$ be a smooth solution to \eqref{eq:flow} on a maximal time interval $[0,T)$. If $\phi' >0$ on $M_0$, then $\phi' >0$ on $M_t$ for all $t\in[0,T)$.
	\end{prop}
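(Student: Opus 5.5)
We argue by contradiction with the strong maximum principle applied to the evolution equation of Lemma~\ref{evol-eq}\ref{item:evol_cosr_F},
\[
\partial_t\phi'=\Delta_{\dot F}\phi'+\bigl(\dot F^{ij}g_{ij}-\mu u\bigr)\phi'.
\]
This is a linear parabolic equation for $\phi'$, whose zeroth-order coefficient $c(x,t):=\dot F^{ij}g_{ij}-\mu(t)u$ is smooth along the flow. Suppose the claim fails, and let $t_0\in(0,T)$ be the first time at which $\min_{M_{t_0}}\phi'=0$. On $[0,t_0)$ we have $\phi'>0$ everywhere. Moreover, $M_t$ is smooth up to $t_0$, so the principal curvatures stay in $\Gamma_+$ on a neighbourhood of $[0,t_0]$; this holds because the solution is admissible on its maximal interval, and parabolicity is only needed on that interval.

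The first step is to justify that $c$ is bounded on $\mathbb S^n\times[0,t_0]$. Boundedness of $\dot F^{ij}g_{ij}$ and of $u$ (note $|u|\le\phi\le1$) follows from smoothness and compactness. The delicate term is $\mu(t)$, because its denominator $\int_{M_t}\phi'\sigma_\ell\,dV_t$ could tend to $0$ as $t\to t_0$. Two ingredients handle this.
\begin{enumerate}[label=\textup{(\roman*)}]
\item Since $\mu$ is part of the smooth solution on $[0,T)$, which is well defined by the short-time existence discussion, $\mu$ is continuous on $[0,t_0]$ as long as the denominator is positive at $t_0$.
\item At $t_0$ we still have $\phi'\ge0$ with $\phi'\not\equiv0$, since a closed hypersurface cannot lie entirely in the equator. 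We also have $\sigma_\ell>0$ by strict convexity. Hence $\int_{M_{t_0}}\phi'\sigma_\ell\,dV_{t_0}>0$, so $\mu$ is continuous and bounded on $[0,t_0]$.
\end{enumerate}
This is the step I expect to require the most care: we must make sure that ``smooth solution'' really includes a well-defined $\mu$ at $t_0$, and that the non-local coefficient does not degenerate there.

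With $c$ bounded, say $|c|\le K$, we set $w:=e^{Kt}\phi'$. Then $w$ satisfies $\partial_t w-\Delta_{\dot F}w=(c+K)w$ with $c+K\ge0$. Since $w\ge0$ on $[0,t_0]$, we get $\partial_t w-\Delta_{\dot F}w\ge0$, so $w$ is a nonnegative supersolution of a uniformly parabolic linear operator on $\mathbb S^n\times[0,t_0]$. Uniform parabolicity holds because $\dot F^i>0$ on the compact set of curvatures attained. We then apply the strong maximum principle (Harnack form) to $w$. If $w(x_1,t_0)=0$ at some interior-in-time point, then $w\equiv0$ on $\mathbb S^n\times[0,t_0]$, since $\mathbb S^n$ is connected. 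This contradicts $\phi'>0$ on $M_0$.

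Alternatively, one can avoid the strong maximum principle. The weak maximum principle applied to $\partial_t\phi'\ge\Delta_{\dot F}\phi'-K\phi'$ gives $\min_{M_t}\phi'\ge e^{-Kt}\min_{M_0}\phi'>0$ for $t\le t_0$, which contradicts the choice of $t_0$ directly. I would present this quantitative version, noting that the only non-trivial input is the bound on $\mu$ up to $t_0$ established above.
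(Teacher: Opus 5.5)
Your proof is correct and follows the paper's argument essentially step by step. You take the first time $t_0$ at which $\min\phi'=0$, bound $\mu$ on $[0,t_0]$ via positivity of $\int_{M_{t_0}}\phi'\sigma_\ell\,dV_{t_0}$, rescale $\phi'$ by an exponential factor to absorb the bounded coefficient $\dot F^{ij}g_{ij}-\mu u$, and conclude with the strong maximum principle; your weak-maximum-principle variant $\min_{M_t}\phi'\geq e^{-Kt}\min_{M_0}\phi'$ works equally well.

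One small precision: a closed hypersurface \emph{can} lie in the equator (the equator itself). The correct reason $\phi'\not\equiv0$ on $M_{t_0}$ is, as in the paper, that $M_{t_0}$ would then coincide with the totally geodesic equator, contradicting $\kappa\in\Gamma_+$.
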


	\begin{proof}
		Suppose that $\phi'$ vanishes, and let $t_0\in(0,T)$ be the first
		time such that
		\(
		\min_{M_{t_0}}\phi'=0.
		\)
		Then
		\[
		\phi'>0
		\quad\text{for }t<t_0,
		\qquad
		\phi'\geq0
		\quad\text{on }M_{t_0}.
		\]
		
		We first show that $\mu$ is bounded on $[0,t_0]$. Since the solution
		is admissible, $\kappa \in \Gamma_+$ and hence
		\(
		\sigma_\ell>0.
		\)
		Moreover, $\phi'$ cannot vanish identically on $M_{t_0}$. Otherwise,
		$M_{t_0}$ would be contained in the equator and, by connectedness,
		would coincide with it, contradicting that $\kappa_i >0$ for all $i$. Therefore,
		\(
		\int_{M_{t_0}}\phi'\sigma_\ell\,dV_{t_0}>0.
		\)
		By continuity, the denominator in \eqref{def-mu} is
		bounded away from zero on a time interval ending at $t_0$. Since the
		numerator is continuous there, $\mu$ is bounded near $t_0$. It is
		also bounded on the preceding compact time interval, where
		$\phi'>0$. 
		
		The coefficient
		\(
		a(x,t):=\dot F^{ij}g_{ij}-\mu u
		\)
		in Lemma \ref{evol-eq} (d) is therefore bounded on
		$M\times[0,t_0]$. Choose
		\(
		\Lambda>\sup_{M\times[0,t_0]}a
		\)
		and set
		\(
		w:=e^{-\Lambda t}\phi'.
		\)
		Then
		\[
		(\partial_t-\Delta_{\dot F})w
		+
		(\Lambda-a)w
		=
		0,
		\]
		where $\Lambda-a>0$. Since $w\geq0$ on
		$M\times[0,t_0]$ and $w(\cdot,0)>0$, the strong maximum principle
		implies that $w$ cannot vanish at time $t_0$. This contradicts the
		definition of $t_0$.
	\end{proof}

	Next, to complete the proof, we need the following lemma, which was proved by Brendle, Choi, and Daskalopoulos \cite[Lemma 5]{BrendleChoiDaskalopoulos2017} and by Choi, Kim, and Lee  \cite[Lemma 4.1]{ChoiKimLee2024}.
	
	\begin{lemma}\label{lemma:eigenvalues_tensor}
		Let $D$ be the multiplicity of the smallest eigenvalue at a point $\xi_0$ on $M_{t_0}$ for $t_0 > 0$ so that 
		\[
		\sigma_1 = \cdots = \sigma_{D}< \sigma_{D+1} \leq \cdots \leq \sigma_n,
		\]
		where $\sigma_1, \cdots, \sigma_n$ are the  eigenvalues of a tensor $\mathcal{P}$. Suppose $\eta$ is a smooth function defined on 
		\[
		\mathcal{M} = \bigcup_{0 < t \le t_0} M_t \times \{t\}
		\]
		such that $\eta \le \sigma_1$ on $\mathcal{M}$ and $\eta= \sigma_1$ at $ (t_0,\xi_0)$. Then, at the point $(t_0,\xi_0)$ with coordinates satisfying 
		$g_{ij} = \delta_{ij}$ and $\mathcal{P}_{ij} = \sigma_i \delta_{ij}$, we have 
		\[\nabla_i \mathcal{P}_{k\ell} = \partial_i \eta \, \delta_{k\ell}, \quad \text{for} \quad 1 \le k,\ell \le D,\]
		and
		\[\nabla_i \nabla_i \eta \le \nabla_i \nabla_i\mathcal{P}^{1}_{1} - 
		\sum_{\alpha> D} \frac{2 (\nabla_i\mathcal{P}^{1}_{\alpha})^2}{\sigma_{\alpha}- \sigma_1}, 
		\qquad
		\partial_t \eta\ge \partial_t \mathcal{P}^1_{1}.\]
	\end{lemma}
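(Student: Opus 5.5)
We treat $\eta$ as a lower barrier for the smallest eigenvalue and reduce everything to a calculus fact about the symmetric matrix $\mathcal P$ restricted to the $D$-dimensional bottom eigenspace. Throughout we work at $(t_0,\xi_0)$ in normal coordinates with $g_{ij}=\delta_{ij}$ and $\mathcal P_{ij}=\sigma_i\delta_{ij}$.

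\emph{First-order identity.} For each fixed unit vector $v$ in the bottom eigenspace, extend $v$ to a neighbourhood by parallel transport along geodesics through $\xi_0$ (and keep it fixed in time in the given coordinates). The function $\psi_v:=\mathcal P(v,v)/|v|^2-\eta$ is nonnegative near $(t_0,\xi_0)$, because $\mathcal P(v,v)/|v|^2\ge\sigma_1\ge\eta$, and it vanishes at that point. Hence it has a spatial minimum there, so $\nabla_i\psi_v=0$, giving $\nabla_i\mathcal P(v,v)=\partial_i\eta$ for every unit $v$ in $\mathrm{span}\{e_1,\dots,e_D\}$. Polarizing, i.e. using $v=(e_k\pm e_\ell)/\sqrt2$, yields $\nabla_i\mathcal P_{k\ell}=\partial_i\eta\,\delta_{k\ell}$ for $1\le k,\ell\le D$.

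\emph{Second-order inequality.} Here the extension of $e_1$ must be chosen more cleverly; we expect this to be the main obstacle. Along the geodesic $\gamma(s)$ in direction $e_i$, set
\[
v(s)=e_1+s\sum_{\alpha>D}c_\alpha e_\alpha,
\]
parallel transported, and consider $f(s)=\mathcal P(v,v)/|v|^2-\eta\ge0$ with $f(0)=0$ and $f'(0)=0$. Then $f''(0)\ge0$. Expanding,
\[
f''(0)=\nabla_i\nabla_i\mathcal P_{11}+4\sum_{\alpha>D}c_\alpha\nabla_i\mathcal P_{1\alpha}+2\sum_{\alpha>D}c_\alpha^2(\sigma_\alpha-\sigma_1)-\nabla_i\nabla_i\eta .
\]
The normalization by $|v|^2$ contributes $-2\sigma_1\sum c_\alpha^2$, which is where the shift to $\sigma_\alpha-\sigma_1$ comes from. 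Minimizing in $c_\alpha$, with $c_\alpha=-\nabla_i\mathcal P_{1\alpha}/(\sigma_\alpha-\sigma_1)$, gives exactly the stated bound with the term $-\sum_{\alpha>D}2(\nabla_i\mathcal P^1_\alpha)^2/(\sigma_\alpha-\sigma_1)$. Restricting the sum to $\alpha>D$ is essential: there $\sigma_\alpha-\sigma_1>0$. For $\alpha\le D$ we have no positive term, but the first-order identity gives $\nabla_i\mathcal P_{1\alpha}=0$ for $1<\alpha\le D$, so no cross term arises.

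\emph{Time derivative.} Keep $v=e_1$ fixed in coordinates for $t\le t_0$. The function $\mathcal P_{11}/g_{11}-\eta$ is nonnegative on $(0,t_0]$ and vanishes at $t_0$, so its time derivative at $t_0$ is $\le0$. We interpret $\partial_t\mathcal P^1_1$ for the mixed tensor, so that the metric variation is absorbed, and obtain $\partial_t\eta\ge\partial_t\mathcal P^1_1$. The only care needed is to use the mixed tensor consistently with the paper's convention, as in Lemma~\ref{lem:S-evolution}.
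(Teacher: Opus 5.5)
Your proof is correct and follows essentially the same argument as the source the paper cites for this lemma (Brendle--Choi--Daskalopoulos, Lemma 5; the paper itself gives no proof). That argument uses a barrier with test vectors in the bottom eigenspace to get the first-order identity, and the test vector $e_1+s\sum_{\alpha>D}c_\alpha e_\alpha$, parallel along a geodesic and optimized at $c_\alpha=-\nabla_i\mathcal P_{1\alpha}/(\sigma_\alpha-\sigma_1)$, for the second-order bound. The one step you leave implicit, that $\partial_t(\mathcal P_{11}/g_{11})=\partial_t\mathcal P_{11}-\sigma_1\partial_t g_{11}=\partial_t\mathcal P^1_1$ at the point, follows at once from $g_{1k}=\delta_{1k}$ and $\mathcal P_{k1}=\sigma_1\delta_{k1}$ there.
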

	
	\begin{thm}[Preservation of horo-convexity]
		\label{thm:preservation-hconvexity}
		Let $F$ satisfy the hypotheses in Assumption \ref{ass:F}, and let $M_t$ be a smooth solution of \eqref{eq:flow}
		on $[0,T)$. If $M_0$ is horo-convex, then $M_t$ satisfies the same property for every $t\in[0,T)$.
		Equivalently, $S_i^j\geq0$ is preserved under the flow.
	\end{thm}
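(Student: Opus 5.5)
We will apply the tensor maximum principle to the evolution of $S_i^j$ from Lemma~\ref{lem:S-evolution}, in the form of Lemma~\ref{lemma:eigenvalues_tensor}. By Proposition~\ref{prop:cos_remains_positive} we have $\phi'>0$ on $[0,T)$, so $S_i^j\geq0$ is equivalent to horo-convexity. Moreover, $\mu$ is smooth and bounded on every compact subinterval.

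\textbf{Reduction to strict inequality.} It suffices to prove the claim for strictly horo-convex data and then approximate; alternatively, we perturb the tensor to $S_i^j+\varepsilon e^{Kt}\delta_i^j$. Assume $S>0$ on $[0,t_0)$ and that the smallest eigenvalue $\lambda_1$ of $S$ vanishes for the first time at $(\xi_0,t_0)$. Before $t_0$ the hypersurfaces are strictly horo-convex, hence strictly convex. At $t_0$ we have $\phi'\kappa_i\geq 1-u>0$, because $u<1$ when $\phi<1$. So the curvatures stay in $\Gamma_+$ and $F$ is well defined. We take $\eta\equiv0$ (or $\eta=-\varepsilon e^{Kt}$) as the lower barrier in Lemma~\ref{lemma:eigenvalues_tensor}. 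We work in an orthonormal frame diagonalizing both $h$ and $S$; they commute, since $S=\phi'A+(u-1)I$. Let $D$ be the multiplicity of $\lambda_1=0$, so that $\nabla_k S_{11}=0$, and more generally $\nabla_k S_{pq}=0$ for $p,q\leq D$.

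\textbf{Zeroth-order terms.} At the point we have $S_1^1=0$, so $-\mu(S^2)_1^1+(\dot F^{kl}h^2_{kl}-2\mu)S_1^1=0$. What remains is $\dot F^{kl}h^2_{kl}>0$, which is the good reaction term. Its presence is why horo-convexity, rather than convexity, is preserved.

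\textbf{Gradient terms (main obstacle).} By Lemma~\ref{lemma:eigenvalues_tensor}, at $(\xi_0,t_0)$
\[
0\geq \partial_t S_1^1-\dot F^{kk}\nabla_k\nabla_k S_1^1+2\sum_k\dot F^{kk}\sum_{\alpha>D}\frac{(\nabla_kS_{1\alpha})^2}{\lambda_\alpha}.
\]
Combined with the evolution equation, this gives
\[
0\geq \phi'\ddot F^{kl,pq}\nabla_1h_{kl}\nabla_1h_{pq}-2\dot F^{kk}\nabla_k\phi'\nabla_kh_{11}+2\sum_{k,\alpha>D}\dot F^{kk}\frac{(\nabla_kS_{1\alpha})^2}{\lambda_\alpha}+\dot F^{kl}h^2_{kl}.
\]
We then express everything through $\nabla h$. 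The condition $\nabla_kS_{11}=0$ gives $\phi'\nabla_kh_{11}=-\kappa_1\nabla_k\phi'-\nabla_ku$. Using Lemma~\ref{lemma:deriv-u}(a), this becomes $\phi'\nabla_k h_{11}=(\kappa_k-\kappa_1)\nabla_k\phi'$. Hence the term $-2\dot F^{kk}\nabla_k\phi'\nabla_kh_{11}=-2\dot F^{kk}(\kappa_k-\kappa_1)(\nabla_k\phi')^2/\phi'$ is only a nonpositive quadratic in $\nabla\phi'$. For $k=\alpha>D$, Codazzi gives $\nabla_\alpha S_{1\alpha}=\phi'\nabla_1h_{\alpha\alpha}+\kappa_\alpha\nabla_1\phi'\cdot 0+\dots$; more precisely $\nabla_\alpha S_{1\alpha}=\phi'\nabla_\alpha h_{1\alpha}=\phi'\nabla_1 h_{\alpha\alpha}$, since $S_{1\alpha}=0$ off the diagonal. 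The difficulty is to combine the concavity term $\ddot F$ with the positive term $2\dot F^{\alpha\alpha}(\nabla_1h_{\alpha\alpha})^2\phi'^2/\lambda_\alpha$ so as to dominate the cross terms. Here we use inverse concavity in its standard form (Andrews):
\[
\ddot F^{kl,pq}\xi_{kl}\xi_{pq}+2\sum_{k}\frac{\dot F^{k}}{\kappa_k}\xi_{kk}^2\geq 2F^{-1}(\dot F^{kl}\xi_{kl})^2,
\]
together with the off-diagonal part $\frac{\dot F^k-\dot F^l}{\kappa_k-\kappa_l}\geq-\frac{\dot F^k}{\kappa_l}$. Since $\lambda_\alpha=\phi'\kappa_\alpha-(1-u)\leq\phi'\kappa_\alpha$, the weight $1/\lambda_\alpha$ beats $1/(\phi'\kappa_\alpha)$, which gives the room needed.

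We expect the remaining indefinite contributions, those involving $\nabla_1F=\dot F^{kk}\nabla_1h_{kk}$ and $\nabla\phi'$, to be absorbed into the strictly positive $\dot F^{kl}h^2_{kl}$. For this we use $|\nabla\phi'|^2=\phi^2-u^2$ (Lemma~\ref{lemma:deriv-u}(c)) and the vanishing $\phi'\kappa_1=1-u$. If this absorption fails, the first thing we will try is to retain the term $\phi'\ddot F$ together with $2F^{-1}(\nabla_1F)^2$ and complete the square in $\nabla_1\phi'$. The result is that the right-hand side is strictly positive, a contradiction. The $\varepsilon$-perturbation then yields $S\geq0$ in general.
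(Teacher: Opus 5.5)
Your skeleton matches the paper's proof: the first null time of $S$, a frame diagonalizing $h$ and $S$, Lemma~\ref{lemma:eigenvalues_tensor} with $\eta\equiv0$, the vanishing of the $\mu$-terms in the null direction, the identity $\phi'\nabla_kh_{11}=(\kappa_k-\kappa_1)\nabla_k\phi'$, and $\lambda_\alpha=\phi'(\kappa_\alpha-\kappa_1)\le\phi'\kappa_\alpha$.

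The gap is that the step which actually produces the contradiction is only announced (``we expect \dots\ to be absorbed'', with a fallback ``if this absorption fails''). What is needed, and what the paper does, is a termwise square completion. Set $a_k:=2(\nabla_k\phi')^2/\phi'$ and use the null relation $\phi'\kappa_1=1-u$:
\[
\kappa_k^2-a_k(\kappa_k-\kappa_1)=\Bigl(\kappa_k-\frac{a_k}{2}\Bigr)^2+\frac{a_k}{4}\bigl(4\kappa_1-a_k\bigr),
\qquad
a_k\le\frac{2(\phi^2-u^2)}{\phi'}\le\frac{2(1-u)(1+u)}{\phi'}\le\frac{4(1-u)}{\phi'}=4\kappa_1 .
\]
This gives $\dot F^k\kappa_k^2-2\dot F^k\nabla_k\phi'\nabla_kh_{11}\ge\dot F^1\kappa_1^2>0$. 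The $+a_k\kappa_1$ piece is indispensable. Without it the $k$-th term is negative whenever $\kappa_k<a_k$, so $\dot F^{kl}h^2_{kl}$ alone does not absorb the $\nabla\phi'$-terms. This is exactly where horo-convexity, rather than convexity, enters.

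Your fallback is aimed at the wrong terms. After substituting for $\nabla_kh_{11}$, the $\nabla\phi'$-terms are decoupled from $\nabla_1h$, and their coefficient vanishes for $k=1$. Also, $2F^{-1}(\nabla_1F)^2$ is a nonnegative term to be discarded, not an indefinite one.

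In the gradient block, your off-diagonal inequality $\frac{\dot F^k-\dot F^l}{\kappa_k-\kappa_l}\ge-\frac{\dot F^k}{\kappa_l}$ is false under Assumption~\ref{ass:F}. Take $n=2$ and $F=4\kappa_1\kappa_2/(\kappa_1+\kappa_2)$, which is concave, and inverse concave because $1/F(\tau^{-1})$ is linear. The left side is $-4/(\kappa_1+\kappa_2)$ and the right side is $-4\kappa_2/(\kappa_1+\kappa_2)^2$, which is strictly larger. Inverse concavity only yields the symmetric condition $\frac{\dot F^\alpha-\dot F^\beta}{\kappa_\alpha-\kappa_\beta}+\frac{\dot F^\alpha}{\kappa_\beta}+\frac{\dot F^\beta}{\kappa_\alpha}\ge0$.

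That symmetric condition suffices on the transverse block $\alpha,\beta>D$. By Codazzi, the correction term of Lemma~\ref{lemma:eigenvalues_tensor} equals $2\phi'\sum_{\alpha>D}\dot F^k(\nabla_1h_{k\alpha})^2/(\kappa_\alpha-\kappa_1)$. It therefore supplies both weights $\dot F^\beta/(\kappa_\alpha-\kappa_1)>\dot F^\beta/\kappa_\alpha$ and $\dot F^\alpha/(\kappa_\beta-\kappa_1)>\dot F^\alpha/\kappa_\beta$.

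You do not address the mixed entries $\nabla_1h_{a\alpha}$ with $a\le D<\alpha$, but they need no inverse concavity. Since $\kappa_a=\kappa_1$, the $\ddot F$ contribution and the $k=a$ correction combine exactly into $2\dot F^\alpha(\nabla_1h_{a\alpha})^2/(\kappa_\alpha-\kappa_1)\ge0$. With these repairs and the square completion above, your outline becomes the paper's proof.
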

	
	\begin{proof}
		We first assume that $S_i^j>0$ at $t=0$. Suppose, to the contrary,
		that positivity is lost. Then there is a first time $t_0>0$ and a point
		$\xi_0\in M_{t_0}$ at which the smallest eigenvalue of $S_i^j$ is zero.
		Choose a local orthonormal frame at $(\xi_0,t_0)$ that diagonalizes both
		$h_i^j$ and $S_i^j$. Write
		\[
		h_i^j=\kappa_i\delta_i^j,
		\qquad
		S_i^j=\sigma_i\delta_i^j,
		\]
		where
		\[
		\sigma_1=\cdots=\sigma_D=0
		<\sigma_{D+1}\leq\cdots\leq\sigma_n.
		\]
		As $\sigma_i=\phi'\kappa_i+u-1$,
		we can write
		\begin{equation}
		\phi'\kappa_1=1-u,
		\qquad
		\sigma_\alpha-\sigma_1
		=\phi'(\kappa_\alpha-\kappa_1)
		\quad (\alpha>D).
		\label{eq:null-relations}
		\end{equation}
		
		Let $\eta$ be a smooth lower support for the smallest eigenvalue $\sigma_1$ of
		$S_i^j$ at $(\xi_0,t_0)$;  i.e., $\eta = \sigma_1(t_0, \xi_0)$ and $\eta < \sigma_1$ on $\mathcal{M} = \bigcup_{0 < t \le t_0} M_t \times \{t\}$.
		
		Then by Lemma \ref{lemma:eigenvalues_tensor}, at $(t_0, \xi_0)$ we obtain
		\begin{itemize}
			\item[\textcircled{i}] \label{item:condició_i} $\nabla_k S_j^i = 0, \quad \text{for } 1 \le i, j \le D$,
			\item[\textcircled{ii}]  \label{item:condició_ii}  $\partial_t S_1^1 \le 0, \quad  \Delta_{\dot F}S_1^1
			\geq \displaystyle
			2\sum_{\alpha>D}
			\frac{\dot{F}^k (\nabla_kS^1_\alpha)^2}
			{\sigma_\alpha-\sigma_1}$, where $\dot{F}^k = \frac{\partial F}{\partial \kappa_k}$.
		\end{itemize}

		At the chosen diagonal frame, notice that for $\alpha>D$ and for all $k$ it follows that
		\begin{equation} \label{gradS}
		\nabla_kS_\alpha^1 =
		(\nabla_k\phi')h_\alpha^1
		+\phi'\nabla_kh_\alpha^1
		+(\nabla_ku)\delta_\alpha^1
		=\phi'\nabla_kh_\alpha^1.
		\end{equation}
		Using the Codazzi identity,  \eqref{eq:null-relations} and raising indices at the orthonormal
		frame, we may also write
		\begin{align}
		\Delta_{\dot F}S_1^1
		\geq
		2\phi'\sum_{\alpha>D}
		\frac{{\dot F^k} (\nabla_1h_{k\alpha})^2}
		{\kappa_\alpha-\kappa_1}.
		\label{eq:L-S-lower-bound}
		\end{align}
		
		We now evaluate the evolution equation from Lemma \ref{lem:S-evolution} in an orthonormal frame at $(t_0, \xi_0)$. Since
		$S_1{}^1=0$ and $(S^2)_1^1=0$, we obtain
		\begin{align}
		(\partial_t- \Delta_{\dot F})S_1^1
		={}&\phi'\ddot F^{kl,pq}
		\nabla_1h_{pq}\nabla_1h_{kl} + \Theta, \quad \text{with} \quad \Theta :=
		\dot F^{k} \big(\kappa_k^2 -2\nabla_k\phi'\nabla_k h_{11}\big).
		\label{eq:null-S-evolution}
		\end{align}
		Combining this with \textcircled{ii}  above and 
		\eqref{eq:L-S-lower-bound}  gives
		\begin{align} 
		0\geq{}&
		\phi'\left(
		\ddot F^{kl,pq}\nabla_1h_{pq}\nabla_1h_{kl}
		+2\sum_{\alpha>D}
		\frac{{\dot F^k} (\nabla_1h_{k\alpha})^2}
		{\kappa_\alpha-\kappa_1}
		\right)	+ \Theta.
		\label{eq:contact-master}
		\end{align}

		On the other hand, \textcircled{i} gives
		\begin{equation}\label{eq:gradient-S-null-block}
		0
		=
		(\nabla_k\phi')h_i^j
		+\phi'\nabla_kh_i^j
		+(\nabla_ku)\delta_i^j.
		\end{equation}
		for every $k$ and $1\leq i,j\leq D$.
		If $i\neq j$, then, in the chosen diagonal frame, 
		$\phi'\nabla_kh_i^j=0$.
		Since $\phi'>0$, we conclude that
		\[
		\nabla_kh_a^b=0,
		\qquad a\neq b,\quad a,b\leq D.
		\]
		Suppose now that $i=j$, and take $k=1$ in
		\eqref{eq:gradient-S-null-block}. Then it follows that
		\[
		\begin{aligned}
		0
		&=
		\kappa_1\nabla_1\phi'
		+\phi'\nabla_1h_i^i
		+\nabla_1u =
		\phi'\nabla_1h_i^i,
		\end{aligned}
		\]
		where we have applied Lemma \ref{lemma:deriv-u} (a), which guarantees that $\nabla_1 u=-\kappa_1\nabla_1\phi'$.
		Thus $\nabla_1h_i^i=0$.

		We use the differential characterization of inverse concavity together
		with the vanishing relations at the minimum point.

		It follows that the only possibly nonzero components of
		$B_{ij}:=\nabla_1h_{ij}$ are $B_{\alpha\beta}$, with
		$\alpha,\beta>D$, and $B_{1\alpha}=B_{\alpha1}$, with $\alpha>D$.
		
		Let $\widehat B$ denote the transverse part of $B$, defined by
		$\widehat B_{\alpha\beta}:=B_{\alpha\beta}$ for $\alpha,\beta>D$, and
		$\widehat B_{ij}:=0$ otherwise. Applying the differential
		characterization of inverse concavity to $\widehat B$, we obtain
		\begin{equation}\label{eq:inverse-concavity-transverse}
		\Upsilon
		:=
		\ddot F^{kl,pq}\widehat B_{pq}\widehat B_{kl}
		+
		2\sum_{\alpha,\beta>D}
		\frac{\dot F^\beta}{\kappa_\alpha}
		\widehat B_{\beta\alpha}^2
		\geq
		\frac{2}{F}
		\left(
		\sum_{\alpha>D}
		\dot F^\alpha\widehat B_{\alpha\alpha}
		\right)^2
		\geq0.
		\end{equation}
		Since $\kappa_\alpha>\kappa_1>0$ for every $\alpha>D$, we have
		\[
		\frac{1}{\kappa_\alpha-\kappa_1}
		-
		\frac{1}{\kappa_\alpha}
		=
		\frac{\kappa_1}
		{\kappa_\alpha(\kappa_\alpha-\kappa_1)}
		>0.
		\]
		Therefore,
		\begin{align}
		\ddot F^{kl,pq}\widehat B_{pq}\widehat B_{kl}
		+
		2\sum_{\alpha,\beta>D}
		\frac{\dot F^\beta\widehat B_{\beta\alpha}^2}
		{\kappa_\alpha-\kappa_1}
		=
		\Upsilon
		+
		2\sum_{\alpha,\beta>D}
		\dot F^\beta
		\frac{\kappa_1}
		{\kappa_\alpha(\kappa_\alpha-\kappa_1)}
		\widehat B_{\beta\alpha}^2
		\geq0.
		\label{eq:transverse-expression-nonnegative}
		\end{align}
		
		We next recall the spectral second-derivative formula. At a diagonal
		matrix $h_i^j=\kappa_i\delta_i^j$, for every symmetric tensor $B$,
		\begin{equation}\label{eq:spectral-second-derivative-F}
		\ddot F^{kl,pq}B_{pq}B_{kl}
		=
		\ddot F^{ij}B_{ii}B_{jj}
		+
		2\sum_{i<j}
		\frac{\dot F^i-\dot F^j}
		{\kappa_i-\kappa_j}B_{ij}^2, \qquad \text{where} \quad 
		\ddot F^{ij}:=\frac{\partial^2F}{\partial\kappa_i\partial\kappa_j}.
		\end{equation}
		Taking into account all the components of $B$ that vanish, the above formula 
		reduces to
		\begin{equation}\label{eq:Hessian-transverse-mixed}
		\ddot F^{kl,pq}B_{pq}B_{kl}
		=
		\ddot F^{kl,pq}\widehat B_{pq}\widehat B_{kl}
		+
		2\sum_{\alpha>D}
		\frac{\dot F^1-\dot F^\alpha}
		{\kappa_1-\kappa_\alpha}B_{1\alpha}^2.
		\end{equation}
		
		Similarly, since $B_{k\alpha}$ can be nonzero only if $k=1$ or
		$k>D$, we can write
		\begin{align}
		2\sum_{\alpha>D}
		\frac{\dot F^kB_{k\alpha}^2}
		{\kappa_\alpha-\kappa_1}
		={}&
		2\sum_{\alpha>D}
		\frac{\dot F^1B_{1\alpha}^2}
		{\kappa_\alpha-\kappa_1}
		+
		2\sum_{\alpha,\beta>D}
		\frac{\dot F^\beta\widehat B_{\beta\alpha}^2}
		{\kappa_\alpha-\kappa_1}.
		\label{eq:correction-decomposition}
		\end{align}

		Adding \eqref{eq:Hessian-transverse-mixed} and
		\eqref{eq:correction-decomposition}, we obtain
		\begin{align}	
		\ddot F^{kl,pq}B_{pq}B_{kl}
		+
		2\sum_{\alpha>D}
		\frac{\dot F^kB_{k\alpha}^2}
		{\kappa_\alpha-\kappa_1}
		=  
		\ddot F^{kl,pq}\widehat B_{pq}\widehat B_{kl}
		+
		2\sum_{\alpha,\beta>D}
		\frac{\dot F^\beta\widehat B_{\beta\alpha}^2}
		{\kappa_\alpha-\kappa_1}
		+	2\sum_{\alpha>D}
		\frac{\dot F^\alpha}
		{\kappa_\alpha-\kappa_1}B_{1\alpha}^2,
		\label{eq:full-gradient-decomposition}
		\end{align}
		where the first two terms on the right-hand side are nonnegative by
		\eqref{eq:transverse-expression-nonnegative}. The last term is also
		nonnegative, since $\dot F^\alpha>0$ and
		$\kappa_\alpha>\kappa_1$. We conclude that
		\begin{align} 
		&
		\ddot F^{kl,pq}\nabla_1h_{pq}\nabla_1h_{kl}
		+
		2\sum_{\alpha>D}
		\frac{\dot F^k(\nabla_1h_{k\alpha})^2}
		{\kappa_\alpha-\kappa_1}
		\geq0.
		\label{eq:full-gradient-expression-positive}
		\end{align}
		
		It remains to estimate the last two terms. With this aim, we take $i = j = 1$ in \eqref{eq:gradient-S-null-block}, which gives
		\[
		\kappa_1\nabla_k\phi'
		+\phi'\nabla_kh_{11}
		= -\nabla_ku = h_k^l\nabla_l\phi'
		\]
		Consequently, as our frame diagonalizes $h_i^j$, it follows that
		\begin{equation}\label{eq:gradient-h11-contact}
		\phi'\nabla_kh_{11}
		=	(\kappa_k-\kappa_1)\nabla_k\phi'.
		\end{equation}
		Therefore,
		\begin{align}
		\Theta
		=
		\dot F^k
		\left(
		\kappa_k^2
		-
		\frac{2}{\phi'}
		(\kappa_k-\kappa_1)w_k^2\right), \quad \text{where } \quad w_k:=|\nabla_k\phi'|.
		\label{eq:remaining-terms}
		\end{align}
		
		Setting $v:=1-u=\phi'\kappa_1$, for every $k$ we have
		\begin{align}
		&
		\kappa_k^2
		-
		\frac{2}{\phi'}
		(\kappa_k-\kappa_1)w_k^2
		=
		\kappa_k^2
		-
		\frac{2w_k^2}{\phi'}\kappa_k
		+
		\frac{2vw_k^2}{(\phi')^2}
		=
		\left(
		\kappa_k-\frac{w_k^2}{\phi'}
		\right)^2
		+
		\frac{w_k^2}{(\phi')^2}
		\left(2v-w_k^2\right).
		\label{eq:complete-square-final}
		\end{align}
		
		We now use Lemma \ref{lemma:deriv-u} (c) to deduce
		\[
		w_k^2\leq|\nabla\phi'|^2\leq 	1-u^2
		=
		(1-u)(1+u) \leq
		2 v.
		\]
		Both terms on the right-hand side of
		\eqref{eq:complete-square-final} are therefore nonnegative. Accordingly, we reach
		\[\Theta \geq \dot F^1\kappa_1^2.\]

		At the point $(t_0, \xi_0)$, since the hypersurface lies in the open northern hemisphere,
		$u\leq\phi<1$ as well as $\phi'>0$,
		and therefore
		\[
		\kappa_1=\frac{1-u}{\phi'}>0,
		\]
		Moreover, the strict monotonicity of $F$ implies $\dot F^1>0$, and hence $\Theta > 0$. Plugging this and \eqref{eq:full-gradient-expression-positive} into \eqref{eq:contact-master} leads to a contradiction. It follows that, if $S_i^j$ is positive definite initially, its
		smallest eigenvalue cannot vanish at a positive time. Therefore, $S_i^j>0$
		is preserved along the flow.
		
		Finally, suppose only that $S\geq0$ at $t=0$. One then uses the
		standard perturbation
		\[
		\widetilde S_i^j =
		S_i^j+\varepsilon e^{\lambda t}\delta_i{}^j,
		\]
		where $\lambda$ is chosen sufficiently large on each compact time
		interval $[0,T']\subset[0,T)$. The strict argument applies to the
		perturbed tensor. Letting $\varepsilon\downarrow0$ gives
		\[
		S_i^j\geq0
		\qquad\text{on }M_t,
		\quad 0\leq t<T,
		\]
		and hence horo-convexity is preserved.
	\end{proof}

	\subsection{Curvature pinching}
	
	The preservation result gives a pointwise lower bound for every
	principal curvature in terms of the support function, but this bound
	still depends on the position of the hypersurface. For the geometric
	and curvature estimates in the following sections, we need the
	stronger, scale-invariant comparison between the largest and smallest
	principal curvatures established below.
	
	\begin{lemma}\label{lem:curvature-pinching}
		There is a constant $C_0>0$, depending only on $M_0$, such that
		\[
		\kappa_n\leq C_0\kappa_1
		\]
		for all $t \in [0, T)$.
	\end{lemma}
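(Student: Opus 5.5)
We plan to prove the pinching by a second tensor maximum principle, in the same spirit as Theorem~\ref{thm:preservation-hconvexity}. The natural candidate is the tensor
\[
T_i^j := h_i^j - \varepsilon F\,\delta_i^j,
\]
with $\varepsilon>0$ small, chosen so that $T>0$ at $t=0$; this is possible since $M_0$ is strictly horo-convex, hence strictly convex and compact. Preservation of $T\geq0$ gives $\kappa_1\geq \varepsilon F$. Since $F$ is increasing and homogeneous of degree one with $F(1,\dots,1)=n$, we have $F\geq F(\kappa_n e_n)$, which in turn is bounded below by $c_F\kappa_n$. Concavity together with the normalization gives $F\geq \min$-type lower bounds, and in particular $F(\kappa)\ge \dot F^n(\cdot)\kappa_n$ on compact pieces of the cone. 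In this way we obtain $\kappa_n\leq C_0\kappa_1$. Alternatively, we could work with the scale-invariant quotient $\kappa_1/F$, or with $h_i^j/F$; I would choose whichever gives the cleanest reaction terms.

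The first step is to compute the evolution of $F$ from Lemma~\ref{evol-eq}(f). We have
\[
(\partial_t-\Delta_{\dot F})F=\dot F^{ij}(\partial_t-\Delta_{\dot F})h_i^j - \ddot F\nabla h\nabla h,
\]
and substituting (f) gives
\[
-\mu\phi'\dot F^{kl}h^2_{kl}+(\dot F^{kl}h^2_{kl}-\dot F^{kl}g_{kl}-\mu u)F+2F\dot F^{kl}g_{kl}
\]
plus the gradient terms. We then form $(\partial_t-\Delta_{\dot F})T_i^j$. At a first null point with null eigenvector $e_1$, we use Lemma~\ref{lemma:eigenvalues_tensor} as before. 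The gradient terms to handle are $\ddot F^{kl,pq}\nabla_1h_{pq}\nabla_1h_{kl}$, $-\varepsilon\ddot F\nabla h\nabla h$ and the correction term $2\sum_{\alpha>D}\dot F^k(\nabla_k T_{1\alpha})^2/(\kappa_\alpha-\kappa_1)$. These are treated by the Andrews argument: concavity makes $-\varepsilon\ddot F\nabla h\nabla h\ge0$, and inverse concavity, used exactly as in \eqref{eq:inverse-concavity-transverse}--\eqref{eq:full-gradient-expression-positive}, controls the first term together with the correction.

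The zero-order terms are then evaluated at $\kappa_1=\varepsilon F$:
\[
-\mu\phi'\kappa_1^2+(\dot F h^2-\dot F g-\mu u)\kappa_1+2F
-\varepsilon\Bigl[-\mu\phi'\dot F h^2+(\dot F h^2-\dot F g-\mu u)F+2F\dot F g\Bigr].
\]
The terms $(\dot Fh^2-\dot Fg-\mu u)\kappa_1$ cancel with $\varepsilon F(\dots)$. This leaves
\[
\mu\phi'(\varepsilon\dot F^kh^2_{kk}-\kappa_1^2)+2F-2\varepsilon F\dot F g.
\]
The $\mu$-term is nonnegative, because $\kappa_1^2=\varepsilon F\kappa_1\le\varepsilon\dot F^k\kappa_k\kappa_1\le\varepsilon\dot F^k\kappa_k^2$. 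Here we use $\mu\ge0$, which holds since $F,\sigma_\ell,\phi'>0$ by Proposition~\ref{prop:cos_remains_positive}. The remaining term $2F(1-\varepsilon\dot F g)$ is nonnegative provided $\varepsilon\sum\dot F^i\le 1$. Concavity and homogeneity give $\sum_i\dot F^i\ge F(1,\dots,1)=n$, but we need an upper bound. I would therefore try to show $\dot Fg\le C$ on the region $\kappa_1\ge\varepsilon F$, which is a compact set of directions in $\Gamma_+$, so that a uniform bound holds there. The bound is applied at the contact point itself, where $\kappa_1=\varepsilon F$, so the argument is not circular. Choosing $\varepsilon$ small depending on this bound closes the argument, with perturbation $\varepsilon' e^{\lambda t}$ as at the end of Theorem~\ref{thm:preservation-hconvexity}.

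The main obstacle I expect is the gradient step. In the Euclidean pinching arguments for concave and inverse-concave $F$ (Andrews), the terms with $\nabla_1 h_{11}$ and the null-block relations of Lemma~\ref{lemma:eigenvalues_tensor} must be combined carefully. Here the null block carries $\nabla_k T_{11}=\nabla_kh_{11}-\varepsilon\nabla_kF=0$ rather than the simpler relations of the horo-convexity proof. If the direct tensor approach fails, I would instead apply the same machinery to $h_i^j/F$, following Andrews' proof that $\kappa_1/F$ satisfies a maximum principle under concavity and inverse concavity. The ambient terms are exactly those computed above, and they have a favourable sign.
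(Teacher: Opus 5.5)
\textbf{The gap is the last step.} Preserving $h_i^j\ge\varepsilon F\delta_i^j$ gives $\kappa_1\ge\varepsilon F$, but for the class in Assumption~\ref{ass:F} this does not imply $\kappa_n\le C_0\kappa_1$.
\begin{itemize}
\item Your bound $F\ge F(\kappa_ne_n)\ge c_F\kappa_n$ evaluates $F$ on $\partial\Gamma_+$. There $F$ is not defined, and admissible speeds typically extend by zero (e.g.\ $F=n\sigma_n^{1/n}$).
\item The bound $F\ge\dot F^n\kappa_n$ ``on compact pieces of the cone'' assumes that $\kappa/\kappa_1$ stays in a compact set. That is exactly the pinching you are trying to prove.
\item A concrete counterexample is the normalized harmonic mean $F=n^2\bigl(\sum_i\kappa_i^{-1}\bigr)^{-1}$. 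It satisfies (a)--(e), since its inverse dual is linear. Yet $F\le n^2\kappa_1$ on all of $\Gamma_+$, and $\kappa_1/F\ge(n-1)/n^2$ at $\kappa=(1,\dots,1,s)$ for every $s>0$.
\item So whenever $\varepsilon\le(n-1)/n^2$, the preserved inequality gives no bound on $\kappa_n/\kappa_1$. For $n\ge3$ the initial data force this as soon as $M_0$ has a point with $\kappa_1\ll\kappa_2=\dots=\kappa_n$.
\end{itemize}

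\textbf{What is missing} is a second, concavity-based estimate $\kappa_n\le CF$. You can get it by preserving $CF\delta_i^j-h_i^j\ge0$ with $C>\max_{M_0}\kappa_n/F$. At a null vector $e_n$ the reaction terms reduce to $\mu\phi'\bigl(\kappa_n^2-C\dot F^k\kappa_k^2\bigr)+2F\bigl(C\sum_i\dot F^i-1\bigr)$. This is nonnegative by $\kappa_n=CF$, Euler's identity, $\sum_i\dot F^i\ge n$ (from concavity) and $C\ge1/n$. The gradient term $-\ddot F^{kl,pq}\nabla_nh_{pq}\nabla_nh_{kl}$ is $\ge0$ by concavity. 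Then $\kappa_n\le CF\le C\varepsilon^{-1}\kappa_1$. The paper avoids the issue by using $h_{ij}-\varepsilon Hg_{ij}$, for which $\kappa_1\ge\varepsilon H\ge\varepsilon\kappa_n$ is immediate. It handles the gradient terms by quoting Andrews' Theorem~4.1, which uses concavity and inverse concavity together.

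\textbf{Two further points in your $T$-step need correcting, though both can be repaired.}

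First, bounding $\dot F^{kl}g_{kl}$ on $\{\kappa_1\ge\varepsilon F\}$ and then shrinking $\varepsilon$ is circular. That set is not a compact set of directions in general (see the harmonic mean), and any bound on it depends on $\varepsilon$. The sign is in fact automatic: at the contact point, $\varepsilon F\sum_i\dot F^i=\kappa_1\sum_i\dot F^i\le\sum_i\dot F^i\kappa_i=F$.

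Second, there is no term $-\varepsilon\ddot F\nabla h\nabla h$. The gradient terms in $(\partial_t-\Delta_{\dot F})F$ cancel exactly, so concavity plays no role here. You must show that $\ddot F^{kl,pq}\nabla_1h_{pq}\nabla_1h_{kl}$ plus the correction term is nonnegative using inverse concavity alone. Now the null block carries $\nabla_1h_{ab}=\beta\delta_{ab}$ with $\beta:=\varepsilon\nabla_1F$, instead of $0$. This is not literally the argument of Theorem~\ref{thm:preservation-hconvexity}, but it works:
\begin{itemize}
\item The mixed entries $\nabla_1h_{a\alpha}$ ($a\le D<\alpha$) combine to $2\dot F^\alpha(\nabla_1h_{a\alpha})^2/(\kappa_\alpha-\kappa_1)\ge0$, so split them off.
\item Apply the inverse-concavity inequality to the remaining block-diagonal matrix, with the null block included.
\item Conclude using $\frac{2}{F}(\nabla_1F)^2=\frac{2F}{\kappa_1^2}\beta^2\ge\frac{2}{\kappa_1}\sum_{a\le D}\dot F^a\beta^2$, which holds because $F\ge\kappa_1\sum_{a\le D}\dot F^a$.
\end{itemize}
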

	
	\begin{proof}
		As horo-convexity implies strict convexity, we can choose $0<\varepsilon\leq1/n$ such that
		$P_{ij}:=h_{ij}-\varepsilon Hg_{ij}$ is positive definite on $M_0$.
		The aim is to prove that $P_{ij}\geq0$ is preserved.
		
		Taking the trace of the evolution equation for $h_i^j$ from Lemma \ref{evol-eq} (f), we obtain
		\[
		\partial_tH
		=
		\Delta_{\dot F}H
		+
		\ddot F^{kl,pq}\nabla_mh_{pq}\nabla^mh_{kl}
		-
		\mu(t)\phi'|A|^2
		+
		\bigl(\dot F^{kl}h^2_{kl}-\dot F^{kl}g_{kl}-\mu(t)u\bigr)H
		+
		2nF.
		\]
		Combining this identity with
		the evolution equations for $h_{ij}$ and $g_{ij}$ gives
		\begin{align}
		(\partial_t-\Delta_{\dot F})P_{ij}
		={}&
		\ddot F^{kl,pq}\nabla_ih_{pq}\nabla_jh_{kl}
		-
		\varepsilon g_{ij}
		\ddot F^{kl,pq}\nabla_mh_{pq}\nabla^mh_{kl}
		\notag\\
		&+
		(\mu(t)\phi'-2F) P^2_{ij}
		+
		\bigl(\dot{F}^{kl} h^2_{kl}-\dot F^{kl}g_{kl}-\mu(t)u-2\varepsilon HF\bigr)P_{ij}
		\notag\\
		&+
		\varepsilon\mu(t)\phi'
		\bigl(|A|^2-\varepsilon H^2\bigr)g_{ij}
		+
		2F(1-n\varepsilon)g_{ij}.
		\label{eq:P-evolution}
		\end{align}
		
		Suppose that $P_{ij}\geq0$ and that $P_{ij}$ has a null eigenvector
		at some point. Choose a principal orthonormal frame such that this
		vector is $e_1$. Then $P_{11}=0$ and
		$\kappa_1=\varepsilon H$. Accordingly, the terms containing $P_{ij}$ or
		$P^2_{ij}$ vanish in the null direction.
		
		Moreover,
		\[
		|A|^2-\varepsilon H^2
		\geq
		\left(\frac1n-\varepsilon\right)H^2
		\geq0.
		\]
		Since $\mu(t)\geq0$, $\phi'>0$, $F>0$ and
		$1-n\varepsilon\geq0$, all the zeroth-order terms in the null
		direction are nonnegative.
		
		It remains to consider the gradient terms. Since $F$ is concave and
		inverse concave, the pinching result for curvature functions
		\cite[Theorem~4.1]{AndrewsPinching} gives
		\begin{align}
		&
		\ddot F^{kl,pq}\nabla_1h_{pq}\nabla_1h_{kl}
		-
		\varepsilon
		\ddot F^{kl,pq}\nabla_mh_{pq}\nabla^mh_{kl}
		+
		2\sup_\Lambda
		\dot F^{kl}
		\left(
		2\Lambda_k^p\nabla_lP_{1p}
		-
		\Lambda_k^p\Lambda_l^qP_{pq}
		\right)
		\geq0.
		\label{eq:P-gradient-condition}
		\end{align}
		Thus the null-eigenvector condition of the tensor maximum principle
		is satisfied, and $P_{ij}\geq0$ is preserved.
		
		Consequently, $\kappa_1\geq\varepsilon H\geq\varepsilon\kappa_n$. The claim follows with $C_0=\varepsilon^{-1}$.
	\end{proof}
	
	We have proved that the flow preserves both horo-convexity and a
	uniform curvature pinching estimate. Horo-convexity ensures that the
	evolving hypersurfaces remain strictly convex in the open northern
	hemisphere, while the pinching estimate gives a uniform comparison
	between all principal curvatures. In the next section, we combine
	these properties with the preservation of the quermassintegral to
	obtain uniform inner and outer radius estimates and, subsequently,
	bounds for the global term.
	
	\section{Geometric consequences of horo-convexity and pinching} \label{sec:geometric-estimates}
	
	We now derive the geometric estimates needed to control the global
	term and to obtain the upper curvature bound. Recall that
	horo-convexity is preserved under the flow (cf. Theorem \ref{thm:preservation-hconvexity}) and that we have proved the curvature
	pinching estimate from Lemma \ref{lem:curvature-pinching}.

	Since the solution remains in the open northern hemisphere,
	horo-convexity implies strict convexity. Indeed,
	$\phi'\kappa_i\geq1-u$, while $\phi'>0$ and $u\leq\phi<1$. Hence
	$\kappa_i>0$ for every $i$.
	
	\subsection{Inner and outer radius estimates}
	
	We first recall the geometric consequences of the pinching estimate.
	These results are independent of the particular evolution equation
	and can be applied to each hypersurface $M_t$.
	
	\begin{lemma}\label{lem:radius-estimates}
		There are positive constants $d_1$ and $d_2$, depending only on $n$,
		the pinching constant $C_0$ and the preserved quermassintegral
		$W_\ell(\Omega_0)$, such that
		\[
		d_1\leq\rho_-(\Omega_t)\leq\rho_+(\Omega_t)
		\leq\frac{\pi}{2}-d_2
		\]
		for every $t\in[0,T)$.
	\end{lemma}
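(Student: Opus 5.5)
The plan is to combine three ingredients: the pinching estimate of Lemma~\ref{lem:curvature-pinching}, the preservation of $W_\ell(\Omega_t)=W_\ell(\Omega_0)$, and the monotonicity of quermassintegrals under inclusion. The key geometric input is a pinching-to-radii comparison for convex hypersurfaces in the sphere: if $\kappa_n\leq C_0\kappa_1$ on a closed strictly convex hypersurface in $\mathbb S^{n+1}$, then $\rho_+(\Omega)\leq C(n,C_0)\,\rho_-(\Omega)$. The Euclidean version goes back to Andrews. In space forms I would invoke the corresponding statement used in \cite{CabezasRivasScheuer2024}, or prove it directly, provided an outer radius bound strictly below $\pi/2$ is available.

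\textbf{Outer radius away from $\pi/2$.} We already know that each $M_t$ is strictly convex and lies in the open hemisphere. This alone does not bound $\rho_+$ away from $\pi/2$ uniformly, so I would use horo-convexity. Let $p$ be a point of $M_t$ of maximal distance $r_{\max}$ from $\mathcal O$. At $p$, the outward normal is $\partial_r$, so $u=\phi(r_{\max})$ and $\phi'\kappa_i\geq 1-\sin r_{\max}$. This is weak near the equator, so instead I argue by comparison. Horo-convexity means that at each point $M_t$ curves at least as much as the equator-tangent sphere touching it there (Proposition~2.1 of \cite{PanScheuer2025}). Hence $\Omega_t$ lies inside the equator-tangent ball touching at $p$, so every convex body of this type has $\rho_+\leq\pi/4$. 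This gives $\rho_+\leq \pi/4$, and we may take $d_2=\pi/4$.

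If a direct argument is needed, I would write $M_t$ as a radial graph near $p$ and compare it with that ball via the maximum principle, using the conformal picture to the Poincaré ball: there $h$-convex sets in the hyperbolic sense are contained in their supporting horoballs. Since the stereographic conformal map preserves containment, $\Omega_t$ lies in a ball of spherical radius $\pi/4$ tangent to the equator. This step I expect to be the main obstacle. One must verify that the local inequality \eqref{def:horoconvex} globalizes to the inclusion; I would transport the known hyperbolic fact through the conformal identification.

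\textbf{Radius bounds.} With $\rho_+\leq\pi/4$ in hand, the spherical pinching lemma applies in a region of bounded geometry and gives $\rho_+(\Omega_t)\leq C_1\rho_-(\Omega_t)$. Now $W_\ell(\Omega_0)=W_\ell(\Omega_t)\leq W_\ell(B_{\rho_+(\Omega_t)}(p))$ by monotonicity under inclusion. The function $\rho\mapsto W_\ell(B_\rho)$ is continuous and strictly increasing with value $0$ at $\rho=0$, so $\rho_+(\Omega_t)\geq c(n,W_\ell(\Omega_0))>0$. Consequently $\rho_-(\Omega_t)\geq c/C_1=:d_1$. Combining the two parts gives $d_1\leq\rho_-\leq\rho_+\leq\pi/2-d_2$, with $d_2=\pi/4$ or any uniform constant produced above.
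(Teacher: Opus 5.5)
\textbf{There is a genuine gap in your outer-radius step.} Horo-convexity does not give $\rho_+\le\pi/4$. In fact it gives no uniform bound below $\pi/2$ at all.

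Consider the centered spheres $S_r(\mathcal O)$ with $r\in(\pi/4,\pi/2)$. They are strictly horo-convex, since $\phi'\kappa_i-(1-u)=\cos r\cot r-1+\sin r=\frac{1-\sin r}{\sin r}>0$. They are also stationary solutions of \eqref{eq:flow}: here $\mu=n/\sin r$, so $\mu\phi'-F=0$. Hence $\rho_+(\Omega_t)=r>\pi/4$ for all $t$. Letting $r\to\pi/2$ shows that $d_2$ cannot be universal; it must degenerate with $W_\ell(\Omega_0)$, exactly as the statement's dependence indicates.

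The mistake is in the size of the supporting equator-tangent ball at the farthest point $p$. Its center lies on the meridian through $p$, on the far side of $\mathcal O$. Tangency to the equator then forces its radius to be $R=\frac{\pi}{4}+\frac{r_{\max}}{2}$. Indeed, $\cos r_{\max}\cot R=1-\sin r_{\max}$, which is equality in \eqref{def:horoconvex} at $p$. The local-to-global containment is the harmless part, via the hyperbolic transport. Even granting it, you only get $\rho_+\le\frac{\pi}{4}+\frac{r_{\max}}{2}$. Nothing available at this stage bounds $r_{\max}$ away from $\pi/2$ uniformly in time; the paper itself only obtains $\phi'\ge c_T$ in the proof of Proposition~\ref{prop:long-time-existence}.

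\textbf{The lower bound is also unsupported as written.} You invoke the pinching-to-radii comparison only under the hypothesis $\rho_+\le\pi/4$. Your argument $W_\ell(\Omega_0)\le W_\ell(B_{\rho_+})\Rightarrow\rho_+\ge c$, hence $\rho_-\ge c/C_1$, is fine once you have a comparison $\rho_+\le C(n,C_0)\rho_-$ valid in the whole open hemisphere.

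\textbf{How the paper proceeds.} The paper does not use horo-convexity here at all. Both bounds come from pinching and the preserved $W_\ell$, by citing \cite[Proposition~3.1 and Corollary~3.2]{CabezasRivasScheuer2024}. The upper bound has to be driven by the quermassintegral. The inball gives $W_\ell(B_{\rho_-})\le W_\ell(\Omega_0)=W_\ell(B_{r_\infty})$, so $\rho_-\le r_\infty<\pi/2$. Pinching must then keep $\rho_+$ away from $\pi/2$.

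One clean way to see this last step is duality. The polar body $\Omega_t^\circ$ is smooth, strictly convex and lies in an open hemisphere. Its principal curvatures are $\kappa_i^{-1}$, so it has the same pinching constant. Its radii are $\rho_\pm(\Omega_t^\circ)=\frac{\pi}{2}-\rho_\mp(\Omega_t)$. Applying the comparison to $\Omega_t^\circ$ yields $\frac{\pi}{2}-\rho_+(\Omega_t)\ge C^{-1}\bigl(\frac{\pi}{2}-r_\infty\bigr)=:d_2$.
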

	
	\begin{proof}
		Since the pinching estimate has already been proved and
		$W_\ell(\Omega_t)=W_\ell(\Omega_0)$ is preserved, we may apply
		\cite[Proposition~3.1 and Corollary~3.2]
		{CabezasRivasScheuer2024} to each $M_t$. These results give
		$\rho_+(\Omega_t)\leq C\rho_-(\Omega_t)$, where $C$ depends only on
		$n$ and $C_0$, together with a positive lower bound for the inner
		radius and an upper bound for the outer radius strictly below
		$\pi/2$. The resulting constants are independent of $t$.
	\end{proof}
	
	\subsection{Bounds for the global term}
	
	We next use the radius estimates to control the denominator in the
	definition \eqref{def-mu} of the global term. 
	\begin{lemma}\label{lem:weighted-curvature-integrals}
		Let
		$M \subset \Snp$ be a closed horo-convex hypersurface enclosing a bounded domain
		$\Om$.	For every $j=0,\ldots,n$, there are constants $c_j,C_j>0$, depending
		only on $n$, $C_0$ and $W_\ell(\Omega)$, such that
		\[
		c_j\leq\int_{M}\phi'\sigma_j\,dV
		\leq\int_{M}\sigma_j\,dV\leq C_j.
		\]
	\end{lemma}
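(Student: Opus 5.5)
The plan has three steps: turn the preserved pinching and quermassintegral into radius bounds, bound the unweighted integrals above, and bound the weighted integrals below.

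\emph{Geometric input.} I apply the argument of Lemma~\ref{lem:radius-estimates} to the single hypersurface $M$, which is pinched with constant $C_0$. This gives $d_1\le\rho_-(\Omega)\le\rho_+(\Omega)\le \pi/2-d_2$, with $d_1,d_2$ depending only on $n$, $C_0$ and $W_\ell(\Omega)$. Since $M$ lies in $\Snp$, every point of $M$ has $r\le \pi/2-d_2'$ for some $d_2'>0$ with the same dependence. I will not try to prove this last fact via the outer ball $B_{\rho_+}(p)$, because $p$ need not be near $\mathcal O$. Instead I use horo-convexity directly: equator-tangent spheres are the equality case, so a horo-convex hypersurface of inner radius at least $d_1$ cannot approach the equator. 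I expect this to be the main obstacle. Failing that, I would reuse the bound on the distance to the equator that \cite{CabezasRivasScheuer2024} uses to control their denominator. In either case I obtain $\phi'\ge c>0$ on $M$, hence $\int_M\phi'\sigma_j\,dV\ge c\int_M\sigma_j\,dV$. The middle inequality $\int_M\phi'\sigma_j\le\int_M\sigma_j$ is trivial, since $0<\phi'\le1$ and $\sigma_j>0$.

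\emph{Upper bound.} I use the spherical quermassintegral structure: $\int_M\sigma_j\,dV$ is a linear combination with fixed coefficients of $W_{j+1}(\Omega)$ and $W_{j-1}(\Omega)$ (and lower ones). By monotonicity under inclusion, $\Omega\subset B_{\rho_+}(p)$, so each $W_k(\Omega)\le W_k(B_{\pi/2})$. With the correct sign bookkeeping of that recursion this gives $\int_M\sigma_j\le C_j$. As an alternative that avoids sign issues, I would use a Minkowski-type formula centered at an inball center $p$. Writing $\tilde u$ and $\tilde\phi'$ for the support function and radial weight relative to $p$, the relation $\int \tilde\phi'\sigma_{j-1}\sim\int \tilde u\,\sigma_j$ together with $\tilde u\ge \sin d_1$ (valid by convexity, since $B_{d_1}(p)\subset\Omega$) yields $\int\sigma_j\le C\int\sigma_{j-1}$. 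Inducting from $\int\sigma_0=|M|\le|\partial B_{\pi/2}|$ then gives the bound; here monotonicity of area holds because $\Omega$ is convex.

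\emph{Lower bound.} I use the pinching $\kappa_n\le C_0\kappa_1$, which gives $\sigma_j\ge c\,\kappa_n^j$. Inducting from $j=0$ with $|M|\ge|\partial B_{d_1}|$ (by convexity) gives $\int_M\sigma_j\ge c_j$. The induction step is the Minkowski formula again, now used in reverse with $\tilde u\le\sin\rho_+$ (diameter bound), so that $\int\tilde\phi'\sigma_{j-1}\le C\int\sigma_j$. Combined with $\tilde\phi'\ge\cos(2\rho_+)$, which must be bounded below by the radius bound, this gives $\int\sigma_j\ge c\int\sigma_{j-1}$. Alternatively, and more simply, I use $\int_M\sigma_n\,dV=$ the measure of the Gauss-type image, which is bounded below for a convex body. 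Multiplying by the lower bound $\phi'\ge c$ from the first step then gives $c_j\le\int_M\phi'\sigma_j\,dV$.
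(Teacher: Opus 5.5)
\textbf{Gap: the pointwise lower bound on $\phi'$ is false.} Your upper bounds are fine. In the spherical recursion, $\int_M H_k$ equals $(n+1)W_{k+1}$ minus a positive multiple of $W_{k-1}$, so the lower term has a favourable sign. The Minkowski induction from an inball center, using $\tilde u\ge\sin d_1$, also works. The paper instead uses $H_j\le C_0^jH_n^{j/n}$, H\"older and Gauss--Bonnet. The lower bound, however, rests on your claim that $\phi'\ge c(n,C_0,W_\ell(\Omega))>0$ pointwise on $M$, and this is false. View $\mathbb S^{n+1}\subset\mathbb R^{n+2}$, so that $\phi'=\langle x,\mathcal O\rangle$ and $u=-\langle \nu,\mathcal O\rangle$. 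For a geodesic sphere $S_\rho(q)$ with $s:=\operatorname{dist}(q,\mathcal O)$, one has $\nu=(\cos\rho\,x-q)/\sin\rho$, and a direct computation gives $\phi'\cot\rho+u\equiv\cos s/\sin\rho$. Hence $S_\rho(q)$ is strictly horo-convex exactly when $s+\rho<\pi/2$. Now take $s=\pi/2-\rho-\varepsilon$. These spheres are umbilic ($C_0=1$), strictly horo-convex, and have fixed inner radius and fixed $W_\ell$, yet $\min_M\phi'=\sin\varepsilon\to0$. That equator-tangent spheres are the equality case means precisely that they are limits of strictly horo-convex spheres. So horo-convexity does not keep $M$ away from the equator. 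You also cannot fall back on \cite{CabezasRivasScheuer2024}: their control of the distance to the equator is relative to re-chosen origins, which is exactly the recalibration this paper avoids. Without the pointwise bound, the step $\int_M\phi'\sigma_j\ge c\int_M\sigma_j$ fails, and with it your whole lower bound.

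\textbf{Missing idea: use horo-convexity in integrated form.} This controls the weight on average rather than pointwise. Tracing \eqref{def:horoconvex} and integrating gives $\int_M\phi'H\,dV\ge n|M|-n\int_Mu\,dV$. The divergence theorem for the conformal field $\phi\,\partial_r$ gives $\int_Mu\,dV=(n+1)\int_\Omega\phi'\,dV$. Now compare with the centered ball $B_r(\mathcal O)$ satisfying $|B_r(\mathcal O)|=|\Omega|$: use rearrangement for $\int_\Omega\phi'$ (since $\cos$ is decreasing) and the spherical isoperimetric inequality for $|M|$. This yields $\int_M\phi'H\,dV\ge n(1-\sin r)|\partial B_r(\mathcal O)|$, and the radius bounds keep $r$ in a compact subinterval of $(0,\pi/2)$. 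The other indices then follow by H\"older. From $H_1\le C_0H_j^{1/j}$ you get $\int_M\phi'H_1\le C_0(\int_M\phi'H_j)^{1/j}(\int_M\phi')^{(j-1)/j}$. Combined with $\int_M\phi'\le|M|\le\omega_n$, this handles $j\ge2$. Taking $j=n$ and using $\int_M\phi'H_n\le\int_MH_n\le C(n)$, it also handles $j=0$. The unweighted lower bounds then follow immediately from $\phi'\le1$. So you do not need your Minkowski induction for lower bounds, which also has problems of its own. First, $\cos(2\rho_+)$ is not positive once $\rho_+>\pi/4$, and the radius bound does not exclude this. Second, the Gauss-image bound for $j=n$ cannot be pushed down to $j<n$ by H\"older without pointwise curvature bounds.
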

	
	\begin{proof}
		We first consider $j=1$. Since $\sigma_1=H$, taking the trace of the
		horo-convexity inequality \eqref{def:horoconvex} and integrating gives
		\begin{equation}\label{eq:weighted-H-first}
		\int_{M}\phi'H\,dV
		\geq n|M|-n\int_{M}u\,dV.
		\end{equation}
		By the divergence theorem, we have
		\begin{equation}\label{eq:divergence-conformal-field}
		\int_{M}u\,dV
		=
		\int_{M}\langle \phi \partial_r,\nu\rangle\,dV
		=
		(n+1)\int_{\Omega}\phi'\,dV.
		\end{equation}
		Combining \eqref{eq:weighted-H-first} and
		\eqref{eq:divergence-conformal-field}, we obtain
		\begin{equation}\label{eq:weighted-H-domain}
		\int_{M}\phi'H\,dV
		\geq
		n|M|-n(n+1)\int_{\Omega}\phi'\,dV.
		\end{equation}
		
		Let $B_r(\mathcal O)$ be the geodesic ball centered at the north pole $\mathcal O$ with
		$|B_r(\mathcal O)|=|\Om|$. Since $\ph'(r)=\cos r$ is decreasing in $r$, a rearrangement
		argument gives
		\[
		\int_{\Omega} \phi' \, dV
		=
		\int_{\Omega \cap B_r(\mathcal{O})} \phi' \, dV
		+
		\int_{\Omega \setminus B_r(\mathcal{O})} \phi' \, dV
		\le
		\int_{\Omega \cap B_r(\mathcal{O})} \phi' \, dV
		+
		\int_{B_r(\mathcal{O}) \setminus \Omega} \phi' \, dV
		=
		\int_{B_r(\mathcal{O})} \phi' \, dV.
		\]
		Moreover, by the spherical isoperimetric inequality,
		\[
		|M|\ge |\partial B_r(\mathcal O)|.
		\]
		Consequently, as for the coordinate sphere $\partial B_r(\mathcal O)$ one has $u=\ph(r)$, we can write
		\begin{align*}
		\int_M \ph'H \dd V
		&\ge n|\partial B_r(\mathcal O)|-n(n+1)\int_{B_r(O)}\ph'\dd V  \\
		&=n \int_{\partial B_r(\mathcal O)}(1-u)\dd V
		= n(1-\ph(r))|\partial B_r(\mathcal O)| .
		\end{align*}

		The radius estimates from Lemma \ref{lem:radius-estimates} imply that $|\Omega|$ is bounded from above
		and away from zero. Thus the radius $r$ remains in a compact
		subinterval of $(0,\pi/2)$. Accordingly,
		\[
		(1-\ph(r))|\partial B_r(O)|
		=(1-\sin r)\om\sin^n r
		\]
		is bounded from below by a positive constant depending only on
		$n,C_0$ and $W_\ell(\Om)$. Hereafter $\om$ denotes the area of the unit $n$-sphere. Hence
		\[
		\int_M \ph'H \dd V\ge c_1>0 .
		\]
		
		We now pass from $H_1 = H/n$ to $H_j$, $1\le j\le n$. By the pinching condition, we have
		\[
		H_1=\frac1n\sum_{i=1}^n\kappa_i
		\le \kappa_n
		\le C_0\kappa_1 .
		\]
		On the other hand, every $j$-fold product of principal curvatures is at least
		$\kappa_1^j$, and hence, because $H_j$ is the average of all such products, $H_j\ge \kappa_1^j$.
		Thus
		\[
		H_1\le C_0 H_j^{1/j} .
		\]
		Multiplying by $\ph'$ and applying H\"older's inequality, we obtain
		\begin{align*}
		\int_M \ph'H_1\dd V
		&\le C_0\int_M \ph' H_j^{1/j}\dd V = C_0\int_M (\ph'H_j)^{1/j}(\ph')^{(j-1)/j}\dd V \\
		&\le C_0
		\left(\int_M \ph'H_j\dd V\right)^{1/j}
		\left(\int_M \ph'\dd V\right)^{(j-1)/j} .
		\end{align*}
		
		Since $\Omega\subset\mathbb S^{n+1}_+$, the monotonicity of
		$W_1$ with respect to inclusion gives
		\[
		\frac{|M|}{n+1}
		=
		W_1(\Omega)
		\leq
		W_1(\mathbb S^{n+1}_+)
		=
		\frac{\omega_n}{n+1},
		\]
		where $\omega_n=|\mathbb S^n|$. Hence
		\begin{equation} \label{area-bound}
		|M|\leq\omega_n,
		\end{equation}

		Using  
		$ \int_M \ph'\dd V\le |M|\le \omega_n$, we get
		\[
		c_1\le C_0\om^{(j-1)/j}
		\left(\int_M \ph'H_j\dd V\right)^{1/j} .
		\]
		Therefore
		\[
		\int_M \ph'H_j\dd V
		\ge \frac{c_1^j}{C_0^j\om^{j-1}}
		=:c_j>0 .
		\]
		This proves the desired lower bound for every $j=1,\ldots,n$.

		It remains to deal with the case $j=0$. Applying the preceding H\"older estimate with
		$j=n$ gives
		\[
		c_1\le C_0
		\left(\int_M \ph'H_n\dd V\right)^{1/n}
		\left(\int_M \ph'\dd V\right)^{(n-1)/n} .
		\]
		On the other hand, since $0\le \ph'\le 1$ and by the Gauss--Bonnet formula on the sphere (see e.g. \cite{Solanes}), we reach
		\[
		\int_M \ph'H_n\dd V\le \int_M K\dd V\le C(n),
		\]
		from which	we conclude that
		\[
		\int_M \ph'\dd V
		\ge \left(\frac{c_1}{C_0 C(n)^{1/n}}\right)^{n/(n-1)}
		=:c_0>0.\]
		
		It remains to establish the upper bounds. For $j=0$, the assertion
		follows from $\sigma_0=1$ and the area estimate \eqref{area-bound}. For $1\leq j\leq n$, the pinching estimate gives
		$
		H_j
		\leq
		\kappa_n^j
		\leq
		C_0^j\kappa_1^j.
		$
		Moreover,
		$
		H_n
		=
		\kappa_1\cdots\kappa_n
		\geq
		\kappa_1^n,
		$
		and therefore
		$
		H_j\leq C_0^jH_n^{j/n}.
		$
		Since $H_n=K$ is the Gauss--Kronecker curvature, the
		Gauss--Bonnet formula cited above and
		H\"older's inequality, together with the area bound
		\eqref{area-bound}, yield
		\[
		\begin{aligned}
		\int_M H_j\,dV
		\leq
		C_0^j
		\int_M H_n^{j/n}\,dV
		\leq
		C_0^j
		\left(
		\int_M H_n\,dV
		\right)^{j/n}
		|M|^{1-j/n}
		&\leq
		C_j.
		\end{aligned}
		\]
		Since
		$
		\sigma_j=\binom{n}{j}H_j
		$
		and $0<\phi'\leq1$, we conclude that
		\[
		\int_M\phi'\sigma_j\,dV
		\leq
		\int_M\sigma_j\,dV
		\leq
		C_j
		\]
		for every $j=0,\ldots,n$. This completes the proof.
	\end{proof}
	
	The preceding estimate gives the required control of the non-local
	term.
	
	\begin{cor}\label{cor:global-term-by-max-F}
		There is a constant $C>0$, depending only on $n$, $C_0$ and
		$W_\ell(\Omega_0)$, such that
		\[
		\mu(t)\leq C\max_{M_t}F
		\]
		for every $t\in[0,T)$.
	\end{cor}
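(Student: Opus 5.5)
The estimate is obtained by bounding the numerator and the denominator of \eqref{def-mu} separately, using Lemma~\ref{lem:weighted-curvature-integrals} for both. Fix $t\in[0,T)$ and apply that lemma to $M=M_t$. This is legitimate for two reasons. First, $M_t$ is horo-convex by Theorem~\ref{thm:preservation-hconvexity}. Second, $W_\ell(\Omega_t)=W_\ell(\Omega_0)$, so the constants $c_\ell, C_\ell$ depend only on $n$, $C_0$ and $W_\ell(\Omega_0)$, and not on $t$.

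For the numerator, recall that along the flow $\kappa\in\Gamma_+$. Hence $\sigma_\ell>0$ and $F>0$ pointwise, and therefore
\[
\int_{M_t}F\sigma_\ell\,dV_t\leq \Bigl(\max_{M_t}F\Bigr)\int_{M_t}\sigma_\ell\,dV_t\leq C_\ell\max_{M_t}F,
\]
where the second inequality is the upper bound of Lemma~\ref{lem:weighted-curvature-integrals} with $j=\ell$.

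For the denominator, the lower bound of the same lemma gives
\[
\int_{M_t}\phi'\sigma_\ell\,dV_t\geq c_\ell>0.
\]
This uses $\phi'>0$, which holds by Proposition~\ref{prop:cos_remains_positive}.

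Dividing the two bounds yields
\[
\mu(t)\leq \frac{C_\ell}{c_\ell}\max_{M_t}F .
\]
We take $C:=\max_{0\le j\le n} C_j/c_j$, which makes the constant uniform in $\ell$ as well. Since $\mu\ge 0$, this is the claimed one-sided bound.

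There is essentially no obstacle here, because all the work was done in Lemma~\ref{lem:weighted-curvature-integrals}. The only point requiring care is that the lemma's constants are time-independent. That rests on the radius bounds of Lemma~\ref{lem:radius-estimates}, which in turn use the pinching constant $C_0$ of Lemma~\ref{lem:curvature-pinching} and the preserved quermassintegral. I would therefore state explicitly that both of these inputs are uniform on $[0,T)$.
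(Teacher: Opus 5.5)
Your proposal is correct and is essentially the paper's proof: bound $\int_{M_t}F\sigma_\ell\,dV_t$ by $\max_{M_t}F\int_{M_t}\sigma_\ell\,dV_t$, then apply the upper and lower bounds of Lemma~\ref{lem:weighted-curvature-integrals}, whose constants are time-independent because $W_\ell(\Omega_t)=W_\ell(\Omega_0)$ and $C_0$ is uniform. Your remark that this uniformity ultimately rests on Lemma~\ref{lem:radius-estimates} and Lemma~\ref{lem:curvature-pinching} is exactly the point the paper relies on as well.
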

	
	\begin{proof}
		By the definition of $\mu(t)$,
		\[
		\mu(t)
		\leq
		\max_{M_t}F
		\frac{\int_{M_t}\sigma_\ell\,dV_t}
		{\int_{M_t}\phi'\sigma_\ell\,dV_t}.
		\]
		The numerator is uniformly bounded and the denominator is uniformly
		bounded away from zero by
		Lemma~\ref{lem:weighted-curvature-integrals}, because the constants there depend on $\ell, n, C_0$ and $W_\ell(\Omega_t) = W_\ell(\Omega_0)$.
	\end{proof}
	
	The radius estimates have thus been used to control the denominator
	of the global term. We now use the same lower bound for the inner
	radius for a second purpose: to choose a point with respect to which
	the support function remains uniformly positive on a time interval of
	fixed length.
	
	\subsection{A lower bound for the support function}
	
	The lower bound for the inner radius allows us to choose, at every
	initial time, a point with respect to which the support function stays
	uniformly positive on a time interval of fixed length.
	
	\begin{lemma}\label{lem:interior-ball}
		Let $t_0\in[0,T)$ and let $p$ be the center of an inball of
		$\Omega_{t_0}$. There is a constant $\tau_0>0$, independent of
		$t_0$, such that
		\[
		B_{d_1/4}(p)\subset\Omega_t
		\]
		for every $t\in[t_0,\min\{t_0+\tau_0,T\})$.
	\end{lemma}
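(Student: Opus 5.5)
The plan is a barrier argument: geodesic spheres centred at the inball centre $p$ serve as comparison hypersurfaces, and the key step is a uniform bound on the normal speed while the hypersurface still encloses a small ball around $p$. At time $t_0$ we have $B_{d_1}(p)\subset\Omega_{t_0}$ by Lemma~\ref{lem:radius-estimates}. The inward part of the speed is $F-\mu\phi'$, and $\mu\geq0$, $\phi'>0$ throughout. So the term $\mu\phi'\nu$ only pushes outward, and the local term $-F\nu$ is the only thing that can move $M_t$ inward toward $p$. We therefore compare with the pure $F$-flow of the sphere $S_{d_1/2}(p)$, which is inside $\Omega_{t_0}$.

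The first step is to solve, by an ODE, the contracting $F$-flow of geodesic spheres centred at $p$. For the sphere of radius $\rho$ in $\mathbb S^{n+1}$, all principal curvatures equal $\cot\rho$. Since $F$ is homogeneous of degree one with $F(1,\dots,1)=n$, we get $F=n\cot\rho$. The radius then evolves by $\rho'(t)=-n\cot\rho$ with $\rho(t_0)=d_1/2$. This solution stays above $d_1/4$ for $t-t_0\le\tau_0$, where $\tau_0>0$ depends only on $n$ and $d_1$, for instance $\tau_0:=\int_{d_1/4}^{d_1/2}\frac{\tan s}{n}\,ds$.

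The second step is to check that the true flow is a supersolution relative to these spheres. The comparison would be carried out with the function $r_p$, the distance to $p$, or more cleanly with a first-touching argument. Suppose $M_t$ first touches the shrinking sphere $S_{\rho(t)}(p)$ from outside at some time $t_1$ and point $\xi$. At the touching point $M_{t_1}$ lies outside the sphere and is tangent to it, with the same outward normal. Hence the principal curvatures of $M_{t_1}$ are at most $\cot\rho$ in the matrix sense, and monotonicity of $F$ (Assumption~\ref{ass:F}(a)) gives $F\le n\cot\rho$. The inward speed of $M_t$ is therefore $F-\mu\phi'\le F\le n\cot\rho$, the inward speed of the sphere. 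A strict version is obtained by perturbing, starting from radius $d_1/2-\varepsilon$, or by the standard comparison principle for $F$-flows. This contradicts the first touching, so $B_{\rho(t)}(p)\subset\Omega_t$.

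The main point to verify is that $F$ is only defined on $\Gamma_+$, so the comparison must be done within the admissible class. The evolving surfaces are strictly convex by Theorem~\ref{thm:preservation-hconvexity}, and spheres of radius below $\pi/2$ are convex, so this is fine. A second check is that $\phi'>0$ on $M_t$, which holds by Proposition~\ref{prop:cos_remains_positive}; it is needed for the sign of $\mu\phi'$. With these in place, $B_{d_1/4}(p)\subset B_{\rho(t)}(p)\subset\Omega_t$ for all $t\in[t_0,\min\{t_0+\tau_0,T\})$, and $\tau_0$ is independent of $t_0$.
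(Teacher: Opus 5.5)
Your proposal is correct and follows essentially the same route as the paper: shrinking geodesic spheres about $p$ starting at radius $d_1/2$, with radius governed by $\rho'=-n\cot\rho$ (by homogeneity and normalization of $F$), and a first-contact comparison using monotonicity of $F$ together with $\mu\geq0$ and $\phi'>0$. One small technical remark: starting the barrier at a smaller radius does not by itself make the speed inequality at first contact strict. Instead, let the barrier shrink slightly faster ($\rho'=-n\cot\rho-\varepsilon$, then $\varepsilon\downarrow0$), or apply ODE comparison directly to $\min_{M_t}\hat r_p$.
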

	
	\begin{proof}
		By Lemma~\ref{lem:radius-estimates},
		$B_{d_1}(p)\subset\Omega_{t_0}$. The result follows from the
		interior-ball comparison argument in
		\cite[Proposition~5.2]{CabezasRivasScheuer2024}.
		
		We only verify the point in that argument which depends on the
		curvature function. A geodesic sphere of radius $R$ has all principal
		curvatures equal to $\cot R$. By the homogeneity and normalization of
		$F$,
		\[
		F(\cot R,\ldots,\cot R)
		=
		\cot R\,F(1,\ldots,1)
		=
		n\cot R.
		\]
		Thus the shrinking geodesic spheres considered in
		\cite[Proposition~5.2]{CabezasRivasScheuer2024} have the correct local
		curvature speed for the present flow. To see that they remain inner
		barriers, suppose that one of these spheres touches $M_t$ for the
		first time from the inside. At the contact point, the second
		fundamental form of the inner sphere is greater than or equal to that
		of $M_t$. Since $F$ is monotone increasing, the corresponding
		curvature functions satisfy
		\[
		F_{\mathrm{sphere}}\geq F_{M_t}.
		\]
		In turn, the outward normal speeds at the contact point are
		\(
		-F_{\mathrm{sphere}}\)
		and 
		\(\mu(t)\phi'-F_{M_t},
		\)
		respectively. Since $\mu(t)\geq0$ and $\phi'>0$, we have
		\[
		-F_{\mathrm{sphere}}
		\leq
		-F_{M_t}
		\leq
		\mu(t)\phi'-F_{M_t}.
		\]
		Thus the inner sphere cannot cross $M_t$ at a first contact point.
		This is precisely the comparison argument used in
		\cite[Proposition~5.2]{CabezasRivasScheuer2024}, and therefore the rest of
		that proof applies unchanged.
		
		The radius of the comparison sphere satisfies an ordinary
		differential equation whose initial value is $d_1/2$. Hence one can
		choose $\tau_0>0$, depending only on $n$ and $d_1$, such that the
		radius remains at least $d_1/4$ on
		$[t_0,t_0+\tau_0]$. In particular, $\tau_0$ is independent of the
		initial time $t_0$.
	\end{proof}
	
	The preceding result gives a uniform lower bound for the distance from
	$p$ to $M_t$ on an interval of fixed length. The corresponding
	support-function estimate is now an immediate consequence of the
	geometric result in \cite[Lemma~2.1]{CabezasRivasScheuer2024}.
	
	Let $\hat r_p$ denote the distance from $p$ and define
	\[
	\hat u_p
	:=
	\left\langle
	\phi(\hat r_p)\partial_{\hat r_p},\nu
	\right\rangle.
	\]
	\vspace*{-0.4cm}
	\begin{lemma}\label{lem:support-lower-bound}
		There is a constant $d_3>0$, independent of $t_0$, such that the
		support function with respect to the point $p$ chosen in
		Lemma~\ref{lem:interior-ball} satisfies
		\[
		\hat u_p-d_3\geq d_3
		\]
		on $M_t$ for every
		$t\in[t_0,\min\{t_0+\tau_0,T\})$.
	\end{lemma}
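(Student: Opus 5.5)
The plan is to combine the interior ball from Lemma~\ref{lem:interior-ball} with the geometric estimate \cite[Lemma~2.1]{CabezasRivasScheuer2024}. That estimate says, roughly, that if a convex hypersurface in the sphere encloses a geodesic ball $B_\delta(p)$, then its support function with respect to $p$ is bounded below by a positive quantity depending only on $\delta$ (in the sphere, $\hat u_p\geq\sin\delta$). Fix $t_0$ and let $p$ be an inball center of $\Omega_{t_0}$. By Lemma~\ref{lem:interior-ball}, $B_{d_1/4}(p)\subset\Omega_t$ for all $t\in[t_0,\min\{t_0+\tau_0,T\})$. Here $d_1$ and $\tau_0$ are independent of $t_0$.

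Next I would verify the convexity hypothesis at every such time. Horo-convexity is preserved (Theorem~\ref{thm:preservation-hconvexity}), and $M_t$ stays in the open hemisphere (Proposition~\ref{prop:cos_remains_positive}). Hence each $M_t$ is strictly convex, as observed at the start of Section~\ref{sec:geometric-estimates}. In addition, Lemma~\ref{lem:radius-estimates} gives $\rho_+(\Omega_t)\leq\pi/2-d_2$. Therefore $\Omega_t$ lies in an open hemisphere, and every point of $M_t$ lies at distance less than $\pi$ (indeed less than $\pi-2d_2$) from $p$. So $\hat r_p$ and $\hat u_p$ are well defined and smooth on $M_t$.

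The core geometric step is the support-function bound. At $x\in M_t$, convexity gives that $\Omega_t$ lies on one side of the totally geodesic hypersurface $\Sigma$ tangent to $M_t$ at $x$. Since $B_{d_1/4}(p)\subset\Omega_t$, we get $\operatorname{dist}(p,\Sigma)\geq d_1/4$. Spherical trigonometry in the right triangle formed by $p$, $x$ and the foot of the perpendicular from $p$ to $\Sigma$ gives
\[
\hat u_p=\sin\hat r_p\,\langle\partial_{\hat r_p},\nu\rangle=\sin\operatorname{dist}(p,\Sigma).
\]
The sign is positive because $\nu$ points away from the side of $\Sigma$ containing $p$. Hence $\hat u_p\geq\sin(d_1/4)$, and we may take $d_3:=\tfrac12\sin(d_1/4)$, which yields $\hat u_p-d_3\geq d_3$.

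The only delicate point is the identity relating $\hat u_p$ to the distance to the tangent great sphere, together with its sign. This is exactly the content of \cite[Lemma~2.1]{CabezasRivasScheuer2024}, which I would quote. If needed, it can be checked directly in the plane spanned by $p$ and $x$ in the ambient $\mathbb R^{n+2}$: there $\phi(\hat r_p)\partial_{\hat r_p}$ is the tangential projection of $-p$ at $x$, up to sign, so $\hat u_p=-\langle p,\nu\rangle$. This equals the sine of the spherical distance from $p$ to the great sphere $\nu^\perp$.
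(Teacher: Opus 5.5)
Your proposal is correct and follows the paper's proof: the interior ball $B_{d_1/4}(p)\subset\Omega_t$ from Lemma~\ref{lem:interior-ball}, combined with \cite[Lemma~2.1]{CabezasRivasScheuer2024}, gives $\hat u_p\geq\sin(d_1/4)$, and you set $d_3=\tfrac12\sin(d_1/4)$, exactly as the paper does. Your additional check that $\hat u_p=-\langle p,\nu\rangle=\sin\operatorname{dist}(p,\Sigma)$, with $\Sigma$ the supporting great sphere at $x$, is correct and makes the cited bound self-contained; it also implies the paper's form $\hat u_p\geq\phi(\operatorname{dist}(p,M_t))$, since $\operatorname{dist}(p,\Sigma)\geq\operatorname{dist}(p,M_t)$.
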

	
	\begin{proof}
		By \cite[Lemma~2.1]{CabezasRivasScheuer2024}, we have the lower bound
		\[
		\hat u_p
		\geq
		\phi\bigl(\operatorname{dist}(p,M_t)\bigr) \geq\phi\left(\frac{d_1}{4}\right),
		\]
		because Lemma~\ref{lem:interior-ball} gives
		$\operatorname{dist}(p,M_t)\geq d_1/4$. Hence the conclusion follows with
		$d_3:=\frac12\phi(d_1/4)$.
	\end{proof}
	
	We have thus obtained constants $d_3>0$ and $\tau_0>0$, independent of
	$t_0$, such that for every $t_0\in[0,T)$ one can choose a point
	$p\in\Omega_{t_0}$ for which
	\[
	\hat u_p-d_3\geq d_3
	\]
	on a time interval of length $\tau_0$. We now combine this estimate
	with the bound for the global term obtained in the previous section.
	
	\section{Upper estimates for the curvature} \label{sec:upper-curvature}
	
	A simplification of the curvature argument in
	\cite{CabezasRivasScheuer2024} is available in the present setting.
	Indeed, Corollary~\ref{cor:global-term-by-max-F} gives an upper bound for $\mu(t)$ in terms of $\max_{M_t}F$. Therefore, at a maximum of the Tso auxiliary function \cite{Tso}, all positive
	terms containing $\mu(t)$ are at most quadratic, whereas the leading
	negative curvature term is cubic.
	
	\begin{prop}\label{prop:upper-curvature-estimate}
		There is a constant $C>0$, depending only on $M_0$, $n$ and the
		preserved quermassintegral, such that
		\[
		F+H+|A|+\mu(t)\leq C
		\]
		on $M_t$ for every $t\in[0,T)$.
	\end{prop}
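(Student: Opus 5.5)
We run a Tso-type maximum principle argument on time intervals of fixed length $\tau_0$, as prepared in Section~\ref{sec:geometric-estimates}. Fix $t_0\in[0,T)$ and let $p$ be the inball center of $\Omega_{t_0}$ given by Lemma~\ref{lem:interior-ball}. By Lemma~\ref{lem:support-lower-bound}, $\hat u_p-d_3\geq d_3$ on $M_t$ for $t\in[t_0,\min\{t_0+\tau_0,T\})$. On this interval we consider
\[
W:=\frac{F}{\hat u_p-d_3}.
\]
The pinching estimate of Lemma~\ref{lem:curvature-pinching} gives $F\simeq H\simeq|A|\simeq\kappa_n\simeq\kappa_1$ with constants depending only on $n$, $C_0$ and $F$. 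Hence it suffices to bound $W$; the bound on $\mu$ then follows from Corollary~\ref{cor:global-term-by-max-F}.

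The first step is to compute the evolution of $F$ and $\hat u_p$. Contracting Lemma~\ref{evol-eq}(e) with $\dot F^{ij}$ gives
\[
\partial_tF=\Delta_{\dot F}F+\dot F^{ij}h^2_{ij}(\mu\phi'-F)-\dot F^{ij}g_{ij}(\mu\phi'-F)-\mu\dot F^{ij}\nabla_i\nabla_j\phi'.
\]
The last term can be rewritten using Lemma~\ref{lemma:deriv-u}(d). For $\hat u_p$ we repeat the computation of Lemma~\ref{evol-eq}(c) with the origin moved to $p$. The local part gives
\[
(\partial_t-\Delta_{\dot F})\hat u_p=-2F\hat\phi'_p+\hat u_p\dot F^{ij}h^2_{ij}+\text{terms containing }\mu.
\]
The $\mu$-terms are now $\mu\phi'\hat\phi'_p-\mu\langle\hat\phi_p\partial_{\hat r_p},\nabla\phi'\rangle$, which are bounded by $C\mu$ because $\phi,\phi'$ and their ambient derivatives are bounded. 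Since $\mu$ is independent of the point, all non-local contributions in both equations are of the form $\mu\cdot(\text{bounded})\cdot(1+|A|)$.

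The second step is the maximum point computation. At a spatial maximum of $W$ with $W$ large we have $\nabla F=W\nabla\hat u_p$, which cancels the gradient cross terms as in Tso's argument. We then obtain
\[
(\partial_t-\Delta_{\dot F})W\leq\frac{1}{\hat u_p-d_3}\Bigl(-F\dot F^{ij}h^2_{ij}+C\mu(1+|A|)+C F\dot F^{ij}g_{ij}\Bigr)-\frac{F}{(\hat u_p-d_3)^2}\Bigl(\hat u_p\dot F^{ij}h^2_{ij}-2F\hat\phi'_p-C\mu\Bigr).
\]
The key cancellation is the one between the term $\dot F^{ij}h^2_{ij}F$ coming from $\partial_t F$ and the term $-W\hat u_p\dot F^{ij}h^2_{ij}$. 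Combined, they leave
\[
-\frac{d_3\,F\,\dot F^{ij}h^2_{ij}}{(\hat u_p-d_3)^2}\leq-cW^3,
\]
using pinching and $\dot F^{ij}h^2_{ij}\geq c\kappa_1^2$ (concavity and normalization give $\dot F^{ij}g_{ij}\geq n$, and pinching bounds $\dot F^i$ on the compact set of pinched directions). All remaining positive terms are at most quadratic in $W$: the $F^2$ terms are clear, and the $\mu F$ terms are quadratic because Corollary~\ref{cor:global-term-by-max-F} gives $\mu\leq C\max_{M_t}F\leq C\max W$. We therefore get
\[
\frac{d}{dt}\max W\leq -c(\max W)^3+C(\max W)^2+C.
\]

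The third step is the ODE comparison. This inequality yields $\max W\leq C(1+(t-t_0)^{-1/2})$ on $(t_0,t_0+\tau_0)$, independent of $t_0$. Applying this with $t_0$ replaced by $t-\tau_0/2$, and using short-time bounds on $[0,\tau_0]$ from $M_0$ via the same comparison started at $t_0=0$ with $W(0)$ bounded, gives a uniform bound on $F$ for all $t$. Pinching then bounds $H$ and $|A|$, and Corollary~\ref{cor:global-term-by-max-F} bounds $\mu$.

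The main obstacle is the bookkeeping of the $\mu$-terms in the evolution of $\hat u_p$ and $F$, where the origin $\mathcal O$ of the weight $\phi'$ differs from the point $p$. We must check that these terms are genuinely only linear in $|A|$ times $\mu$, and in particular that no $\mu|A|^2$ term survives. The term $-\mu\dot F^{ij}\nabla_i\nabla_j\phi'$ equals $-\mu(Fu-\phi'\dot F^{ij}g_{ij})$ modulo the $\mu\phi' h^2$ pieces. The $\mu\phi'\dot F^{ij}h^2_{ij}$ term in $\partial_tF$ is quadratic in curvature times $\mu$, which would be cubic. We plan to absorb it by keeping it explicit and checking that it cancels against the matching $\mu\phi'\hat u_p\dot F^{ij}h^2_{ij}$-type term in the $\hat u_p$ equation, exactly as $F\dot F^{ij}h^2_{ij}$ did. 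If the cancellation leaves a multiple of $d_3$, we would instead use $\mu\leq C\max F$ together with the room in the cubic term, possibly after replacing $W$ by $F/(\hat u_p-d_3)$ with a smaller $d_3$.
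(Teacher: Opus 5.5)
\textbf{There is a genuine but localized gap, at exactly the term you flag as the main obstacle; your remedies for it fail, and the correct fix is a sign.} Otherwise your architecture matches the paper's: the Tso function $F/(\hat u_p-d_3)$ on intervals of length $\tau_0$, Corollary~\ref{cor:global-term-by-max-F} to keep the non-local terms quadratic, the cubic-versus-quadratic inequality, and the restart at $t_0=t-\tau_0/2$.

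\textbf{The sign error.} Your formula is $\dot F^{ij}\partial_t h_{ij}$, not $\partial_t F$. Since $F$ is a function of the Weingarten map $h_i^j=g^{jk}h_{ik}$, you must add the contribution of $\partial_t g^{ij}=-2(\mu\phi'-F)h^{ij}$ from Lemma~\ref{evol-eq}(b), namely $-2(\mu\phi'-F)\dot F^{ij}h^2_{ij}$. Equivalently, contract Lemma~\ref{evol-eq}(f) instead of (e). This flips the sign of the whole $\dot F^{ij}h^2_{ij}$ coefficient:
\[
\partial_tF=\Delta_{\dot F}F+(F-\mu\phi')\dot F^{ij}h^2_{ij}+F\dot F^{ij}g_{ij}-\mu uF .
\]
Check: for shrinking spheres with $\mu=0$ this gives $\partial_t(n\cot r)=n^2\cot r(1+\cot^2 r)$, which is correct, whereas your formula gives $n^2\cot r(1-\cot^2 r)$. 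Your ``key cancellation'' leaving $-d_3F\dot F^{ij}h^2_{ij}/(\hat u_p-d_3)^2$ already needs $+F\dot F^{ij}h^2_{ij}$. So your displayed maximum-point inequality, which carries $-F\dot F^{ij}h^2_{ij}$, is internally inconsistent. Once the sign is fixed, the non-local piece becomes $-\mu\phi'\dot F^{ij}h^2_{ij}$.

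\textbf{Why your remedies fail.} Neither would handle a positive $\mu\phi'\dot F^{ij}h^2_{ij}$.
\begin{enumerate}
\item Cancellation: the $\hat u_p$ equation has no $\mu\,\dot F^{ij}h^2_{ij}$ term. Your own computation shows its non-local part is $\mu\phi'\hat\phi'_p+\mu\hat\phi_p\phi\langle\nabla\hat r_p,\nabla r\rangle=O(\mu)$, so there is nothing to cancel against.
\item Absorption: using $\mu\leq C\max_{M_t}F\leq C\max W$ would leave a positive cubic term $C\phi' W\dot F^{ij}h^2_{ij}/(\hat u_p-d_3)$. It competes with the good term $-d_3W\dot F^{ij}h^2_{ij}/(\hat u_p-d_3)$. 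You would need $d_3$ to dominate the constant of Corollary~\ref{cor:global-term-by-max-F}, and nothing guarantees this. Shrinking $d_3$ makes it worse.
\end{enumerate}

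\textbf{The fix.} With the correct sign, $-\frac{\mu\phi'}{\hat u_p-d_3}\dot F^{ij}h^2_{ij}\leq0$, because $\mu\geq0$ and $\phi'>0$ (Proposition~\ref{prop:cos_remains_positive}). You simply discard it, as the paper does. The remaining non-local terms are
\[
-\mu uW
\qquad\text{and}\qquad
-\frac{\mu W}{\hat u_p-d_3}\bigl(\phi'\hat\phi'_p+\hat\phi_p\phi\langle\nabla\hat r_p,\nabla r\rangle\bigr).
\]
Both are $O(\mu W)=O(W^2)$, and the rest of your argument then goes through. Your earlier claim that every non-local contribution is $\mu\cdot O(1+|A|)$ is false as stated. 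The correct statement is that the one contribution of higher order in $|A|$ has a favourable sign.
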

	
	\begin{proof}
		Fix $t_0\in[0,T)$ and choose $p\in\Omega_{t_0}$ as in
		Lemma~\ref{lem:support-lower-bound}. On the interval
		\[
		I_{t_0}
		:=
		[t_0,\min\{t_0+\tau_0,T\}),
		\]
		we suppress the dependence on $p$ and write
		$\hat u=\hat u_p$, $\hat\phi=\phi(\hat r_p)$ and
		$\hat\phi'=\phi'(\hat r_p)$. Consider the Tso-type auxiliary function
		\[
		\Psi=\frac{F}{\hat u-d_3}
		\]
		The starting point is the evolution equation
		\begin{equation}\label{Phi-evolution-exact}
		\begin{aligned}
		\big(\partial_t - \Delta_{\dot F}\big)\Psi
		={}&\frac{2}{\hat u-d_3}{\dot F}^{ij}\nabla_i\Psi\nabla_j\hat u 
		+ \Psi\left(\dot F^{ij}g_{ij}-\frac{d_3}{\hat u-d_3}\dot F^{ij} h^2_{ij}\right)
		+2\Psi^2 \hat\phi' \\
		& -\mu(t)\Psi u 
		-\frac{\mu(t)}{\hat u-d_3}
		\left(
		\phi' \dot F^{ij}h^2_{ij}
		+\Psi\big(
		\hat \phi'\phi'
		+\hat \phi \, \phi\langle\nabla\hat r,\nabla r\rangle
		\big)
		\right).
		\end{aligned}
		\end{equation}

		Hereafter set $\mathscr G(t):=\max_{M_t}\Psi$. Since $F=(\hat u-d_3)\Psi$ and $\hat u\leq1$, we have
		$\max_{M_t}F\leq\mathscr G$. Corollary~\ref{cor:global-term-by-max-F}
		therefore gives
		\begin{equation}\label{eq:mu-by-mathscr-G}
		\mu(t)\leq C\mathscr G.
		\end{equation}
		
		We next record a consequence of the curvature pinching estimate.
		Since
		\[
		\kappa_1\leq\kappa_i\leq C_0\kappa_1,
		\]
		the normalized curvature 
		\(
		\widetilde\kappa:=\frac{\kappa}{\kappa_1}
		\)
		belongs to the compact set
		\[
		K_{C_0}
		:=
		\left\{
		\lambda\in\Gamma_+:
		\lambda_1=1,\quad
		1\leq\lambda_i\leq C_0
		\right\}.
		\]
		Since $F$ is homogeneous of degree one, each derivative $\dot F^i$
		is homogeneous of degree zero. Hence
		$\dot F^i(\kappa)=\dot F^i(\widetilde\kappa)$. By the smoothness and
		strict monotonicity of $F$, and by the compactness of $K_{C_0}$, there
		are constants $0<c\leq C$ such that
		\(
		c\leq\dot F^i\leq C
		\)
		for every $i$. Using an orthonormal frame,
		$\dot F^{ij}=\dot F^i\delta^{ij}$, and therefore
		\begin{equation}\label{eq:trace-Fdot-bound}
		\dot F^{ij}g_{ij}
		=
		\sum_{i=1}^n\dot F^i
		\leq C.
		\end{equation}
		Moreover, Euler's identity,  the Cauchy--Schwarz inequality  and \eqref{eq:trace-Fdot-bound} give
		\[
		F^2
		=
		\left(F^i\kappa_i\right)^2
		\leq\sum_{i=1}^n\dot F^i\sum_{i=1}^n\dot F^i\kappa_i^2\leq
		C
		\dot F^i\kappa_i^2.
		\]
		Consequently,
		\begin{equation}\label{eq:weighted-curvature-lower}
		\dot F^{ij}h^2_{ij}
		=
		\sum_i\dot F^i\kappa_i^2
		\geq cF^2 \qquad \text{for some } \quad c>0.
		\end{equation}
		
		Let $(t,\xi)$ be a point where $\Psi$ attains its spatial maximum at
		time $t\in I_{t_0}$. If $\mathscr G$ is not differentiable at $t$, we
		use its upper Dini derivative. The evolution equation of $\Psi$ gives
		\begin{align}
		D^+\mathscr G
		\leq{}&
		\mathscr G\dot F^{ij}g_{ij}
		-
		\frac{d_3 \mathscr G}{\hat u-d_3}
		\dot F^{ij} h^2_{ij}
		+
		2\mathscr G^2\hat\phi'
		-
		\mu \mathscr G u
		-
		\frac{\mu\mathscr G}
		{\hat u-d_3}
		\left(\hat\phi'\phi'
		+
		\hat\phi\phi
		\langle\nabla\hat r,\nabla r\rangle
		\right),
		\label{eq:mathscr-G-maximum}
		\end{align}
		where we have discarded the term
		\(
		-\frac{\mu(t)\phi'}
		{\hat u-d_3}
		\dot F^{ij}h^2_{ij}
		\), which is nonpositive
		because $\mu(t)\geq0$, $\phi'>0$ and
		$\dot F^{ij}h^2_{ij}>0$.
		
		All the remaining ambient quantities are uniformly bounded:
		$|\phi|,|\phi'|,|\hat\phi|,|\hat\phi'|\leq1$ and
		$|\langle\nabla\hat r,\nabla r\rangle|\leq1$. Using
		$\hat u-d_3\geq d_3$ and \eqref{eq:mu-by-mathscr-G}, we obtain
		\[
		\left|
		\mu \mathscr G u
		+
		\frac{\mu	\mathscr G}
		{\hat u-d_3}
		\left(
		\hat\phi'\phi'
		+
		\hat\phi\phi
		\langle\nabla\hat r,\nabla r\rangle
		\right)
		\right|
		\leq C\mathscr G^2.
		\]
		Together with \eqref{eq:trace-Fdot-bound}, this shows that all the
		remaining positive terms in \eqref{eq:mathscr-G-maximum} are bounded
		by $C(\mathscr G+\mathscr G^2)$.

		On the other hand, the leading negative term is cubic. Indeed, by
		\eqref{eq:weighted-curvature-lower} and
		$F=(\hat u-d_3)\mathscr G$, we have
		\[
		-
		\frac{d_3}{\hat u-d_3}
		\mathscr G\dot F^{ij} h^2_{ij}
		\leq
		-cd_3(\hat u-d_3)\mathscr G^3
		\leq
		-cd_3^2\mathscr G^3,
		\]
		where we have also applied Lemma \ref{lem:support-lower-bound}. In short, we conclude that there are positive constants $C_1$ and $C_2$,
		independent of $t_0$ and of the point $p$, such that
		\begin{equation}\label{eq:mathscr-G-cubic}
		D^+\mathscr G
		\leq
		C_1(\mathscr G+\mathscr G^2)-C_2\mathscr G^3.
		\end{equation}
		For $\mathscr G\geq1$, we have
		$\mathscr G+\mathscr G^2\leq2\mathscr G^2$. 
		Then, whenever $\mathscr G\geq 	\Upsilon :=\max\left\{1,\frac{4C_1}{C_2}\right\}$, it holds
		\[
		C_1(\mathscr G+\mathscr G^2)
		\leq
		2C_1\mathscr G^2
		\leq
		\frac{C_2}{2}\mathscr G^3.
		\]
		Thus \eqref{eq:mathscr-G-cubic} gives
		\begin{equation}\label{eq:mathscr-G-decay}
		D^+\mathscr G
		\leq
		-\frac{C_2}{2}\mathscr G^3 \qquad \text{whenever } \quad \mathscr G\geq \Upsilon.
		\end{equation}
		Integrating the differential inequality, we obtain
		\begin{equation}\label{eq:mathscr-G-smoothing}
		\mathscr G(t)
		\leq
		\max\left\{
		\Upsilon,\,
		\frac{1}{\sqrt{C_2(t-t_0)}}
		\right\},
		\qquad t>t_0,
		\end{equation}
		unless the estimate obtained from the initial value
		$\mathscr G(t_0)$ is smaller. In particular, if
		$t_0+\tau_0/2<T$, then
		\[
		\mathscr G\left(t_0+\frac{\tau_0}{2}\right)
		\leq
		\max\left\{
		\Upsilon,\,
		\sqrt{\frac{2}{C_2\tau_0}}
		\right\}.
		\]
		The right-hand side is independent of $t_0$, of the choice of $p$
		and of the initial value $\mathscr G(t_0)$.
		
		We now obtain a bound on the whole maximal time interval. On
		$[0,\min\{\tau_0/2,T\})$, we apply the preceding maximum-principle
		argument with $t_0=0$. Since $M_0$ is smooth and
		$\hat u-d_3\geq d_3$, the initial value $\mathscr G(0)$ is finite.
		Hence $\mathscr G$ is bounded on this initial interval.
		
		Now let $t\in[\tau_0/2,T)$. Set
		\(
		t_0:=t-\frac{\tau_0}{2}
		\)
		and choose $p$ to be the center of an inball of $\Omega_{t_0}$.
		Lemma~\ref{lem:support-lower-bound} ensures that
		$\hat u_p-d_3\geq d_3$ on the interval containing $[t_0,t]$.
		Applying \eqref{eq:mathscr-G-smoothing} at time $t$ gives
		\(
		\mathscr G(t)
		\leq
		\max\left\{
		\Upsilon,\,
		\sqrt{\frac{2}{C_2\tau_0}}
		\right\}.
		\)
		Therefore, after enlarging the constant to include the initial time
		interval,
		\(
		\mathscr G(t)\leq C
		\)
		for every $t\in[0,T)$.
		
		Since $F=(\hat u-d_3)\Psi$ and $\hat u\leq1$, we conclude that
		\(
		F\leq C.
		\)
		It remains to translate this estimate into bounds for all principal
		curvatures. By monotonicity, homogeneity and normalization,
		\[
		F(\kappa_1,\ldots,\kappa_n)
		\geq
		F(\kappa_1,\ldots,\kappa_1)
		=
		n\kappa_1.
		\]
		Using the curvature pinching estimate, we obtain
		\[
		H\leq n\kappa_n\leq nC_0\kappa_1\leq C_0F.
		\]
		Hence $H\leq C$. Since all principal curvatures are positive,
		$|A|\leq H \leq C$. Finally, Corollary~\ref{cor:global-term-by-max-F} gives
		\(
		\mu(t)\leq C\max_{M_t}F\leq C
		\)
		which completes the proof.
	\end{proof}
	
	We have thus obtained uniform bounds for the curvature and the global
	term on the whole maximal existence interval. The essential point is
	that the lower support-function estimate is available on time
	intervals of uniform length, while the cubic differential inequality
	for $\mathscr G$ produces a regularizing bound independent of the
	value of the auxiliary function at the beginning of each interval.
	
	\section{Long-time existence and exponential convergence}
	\label{sec:long-time-convergence}

	\label{sec:long-time-convergence}
	
	We now complete the proof of the main theorem. The upper curvature
	estimate prevents the formation of finite-time singularities. We then
	improve the curvature pinching estimate and obtain exponential decay
	of the traceless second fundamental form. This implies that the
	evolving hypersurfaces approach the family of geodesic spheres at an
	exponential rate. Finally, we use the exponential closeness to geodesic spheres and the preserved quermassintegral to identify the limiting radius. We then adapt the asymptotic argument in \cite[Section~7]{CabezasRivasScheuer2024} to identify the center of the limiting sphere. Once smooth convergence to the centered sphere has been established, a maximum-principle argument for the radial graph gives the exponential rate.
	
	Since all the geometric and curvature estimates obtained above hold
	globally, the changes of origin used in
	\cite{CabezasRivasScheuer2024} to recover these estimates on successive
	time intervals are not needed here.

	\subsection{Long-time existence}
	
	We first show that the estimates obtained in the previous sections
	prevent the formation of finite-time singularities.
	
	\begin{prop}[Long-time existence]
		\label{prop:long-time-existence}
		Let $M_t$ be the smooth solution of \eqref{eq:flow} with strictly 
		horo-convex initial hypersurface $M_0$. Then the solution exists smoothly for
		all time. 
	\end{prop}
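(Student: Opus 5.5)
We argue by contradiction. Suppose the maximal existence time $T$ is finite. The aim is to show that every quantity controlling parabolicity, and the solution in every $C^k$ norm, is bounded uniformly on $[0,T)$. Then the solution extends smoothly up to $T$, and short-time existence restarts the flow from $M_T$, which contradicts maximality.

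\textbf{Step 1: zeroth-order and curvature bounds.} We collect the bounds already available:
\begin{itemize}
\item Proposition~\ref{prop:cos_remains_positive} and Lemma~\ref{lem:radius-estimates} keep $M_t$ inside the compact region $\{r\le \pi/2-d_2\}$, so $\phi'\ge \cos(\pi/2-d_2)>0$ uniformly.
\item Proposition~\ref{prop:upper-curvature-estimate} gives $F+|A|+\mu\le C$.
\item Lemma~\ref{lem:curvature-pinching}, combined with Theorem~\ref{thm:preservation-hconvexity}, gives a uniform positive lower bound for the curvatures. Indeed, $\kappa_1\ge (1-u)/\phi'\ge 1-\sin(\pi/2-d_2)>0$, because $u\le\phi$.
\end{itemize}
Hence the principal curvatures stay in a compact subset $K\subset\Gamma_+$, independent of $t$. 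On $K$, the constants in $c\le \dot F^i\le C$ and the bounds on all higher derivatives of $F$ are uniform. The equation is therefore uniformly parabolic on $[0,T)$.

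\textbf{Step 2: higher regularity.} We write $M_t$ locally as graphs. On a short interval around each time we use radial graphs over $S^n$ centered at the inball center $p$ from Lemma~\ref{lem:interior-ball}. Lemma~\ref{lem:support-lower-bound} gives $\hat u_p\ge 2d_3$, which yields a uniform gradient bound for the graph function. The flow becomes a scalar fully nonlinear parabolic equation $\partial_t\varrho = G(D^2\varrho,D\varrho,\varrho,t)$, where the term $\mu(t)\phi'$ is a bounded lower-order term that is merely continuous in time. The curvature bounds give $C^2$ bounds on $\varrho$. The operator is concave in $D^2\varrho$ by the concavity of $F$ (Assumption~\ref{ass:F}(d)). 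The Krylov--Safonov and Evans--Krylov theory (as in \cite[Section 2.5–2.6]{Gerhardt2006}) then gives $C^{2,\alpha}$ bounds in space and $C^{1,\alpha/2}$ bounds in time.

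\textbf{Step 3: the time regularity of $\mu$.} This is the main obstacle: differentiating the equation to bootstrap requires Hölder regularity of $\mu$ in $t$. We get it from the $C^{2,\alpha}$ bounds of Step 2. These make $F$, $\sigma_\ell$, $dV_t$ and $\phi'$ Hölder in time. Lemma~\ref{lem:weighted-curvature-integrals} bounds the denominator $\int\phi'\sigma_\ell$ away from zero. Together these show that $\mu$ is Hölder continuous. Parabolic Schauder estimates then improve regularity one step. Iterating this, with $\mu$ inheriting one more order of time regularity at each stage, yields uniform $C^k$ bounds for all $k$.

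\textbf{Conclusion.} By Arzelà--Ascoli, $x(\cdot,t)$ converges smoothly to a smooth embedding $x(\cdot,T)$ as $t\to T$. This limit is strictly horo-convex by the strict preservation argument and lies in $\mathbb S^{n+1}_+$. Short-time existence then extends the flow beyond $T$, contradicting maximality. Hence $T=\infty$.
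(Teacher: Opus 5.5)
Your architecture matches the paper's: contradiction with $T<\infty$, radial graphs about the inball center $p$ with the gradient bound from $\hat u_p\ge 2d_3$, concavity-based $C^{2,\alpha}$ estimates, a Schauder bootstrap, and a restart from $M_T$. However, Step~1 contains a false inference, and everything downstream depends on it.

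Lemma~\ref{lem:radius-estimates} bounds the \emph{outer radius}, i.e.\ $\Omega_t\subset B_{\pi/2-d_2}(q_t)$ for \emph{some} center $q_t$. It does not confine $M_t$ to $\{r\le\pi/2-d_2\}$, where $r$ is the distance from the fixed origin $\mathcal O$. For example, take a geodesic sphere of radius $\rho\in[d_1,\pi/2-d_2]$ whose center lies at distance $\pi/2-\rho-\epsilon$ from $\mathcal O$. It is strictly horo-convex, perfectly pinched and satisfies every radius bound, yet $\min\phi'\approx\sin\epsilon$. Proposition~\ref{prop:cos_remains_positive} gives $\phi'>0$ only at each time, not uniformly on $[0,T)$.

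So you have no lower bound for $\phi'$, and hence none for $\kappa_1$. Horo-convexity only gives $\kappa_1\ge(1-\phi)/\phi'=\cos r/(1+\sin r)$, which degenerates at the equator, and pinching controls only $\kappa/\kappa_1$. This is not cosmetic. Since $\ddot F$ is homogeneous of degree $-1$, without $\kappa_1\ge\delta>0$ the coefficients $\dot F(A)$ of the differentiated equation need not be H\"older continuous. That leaves the Schauder iteration of Step~3 unsupported, and likewise the admissibility of $M_T$ and $M_T\subset\Snp$ in your conclusion.

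The repair, which is what the paper does, is to accept a $T$-dependent bound; this suffices because $T<\infty$ is the contradiction hypothesis.
By Lemma~\ref{evol-eq}\,(d), $(\partial_t-\Delta_{\dot F})\phi'=(\dot F^{ij}g_{ij}-\mu u)\phi'$. The coefficient is at least $-\mu\ge -C$, using $\dot F^{ij}g_{ij}>0$, $|u|\le 1$ and Proposition~\ref{prop:upper-curvature-estimate}. The minimum principle then gives $\phi'\ge e^{-CT}\min_{M_0}\phi'=:c_T>0$ on $[0,T)$.
Next, $u\le\phi\le\sqrt{1-c_T^2}$ and $\phi'\le 1$ give $\kappa_1\ge (1-u)/\phi'\ge 1-\sqrt{1-c_T^2}$. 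The curvature vectors therefore stay in a compact set $K_T\Subset\Gamma_+$, and $\phi'\ge c_T$ passes to the limit at $T$. The rest of your argument then goes through with $T$-dependent constants. That includes your explicit H\"older-in-time bootstrap for $\mu$, which is a point the paper leaves implicit.
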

	
	\begin{proof}
		Let $[0,T)$ be the maximal interval of existence. We argue by
		contradiction and suppose that $T<\infty$. The estimates obtained in Proposition 
		\ref{prop:upper-curvature-estimate} give
		\(
		|A|+\mu(t)\leq C
		\)
		on $[0,T)$. In particular, all the principal curvatures remain
		uniformly bounded from above.
		
		By the curvature pinching estimate from Lemma \ref{lem:curvature-pinching},
		the normalized curvature vectors
		$\kappa/\kappa_1$ remain in a fixed compact subset of $\Gamma_+$. Since $F$ is strictly monotone and each derivative
		$\dot F^i$ is homogeneous of degree zero, there are constants
		$0<\lambda\leq\Lambda$ such that
		\[
		\lambda|\xi|^2
		\leq
		\dot F^{ij}\xi_i\xi_j
		\leq
		\Lambda|\xi|^2
		\]
		for every covector $\xi$ and every $t\in[0,T)$. Thus the linearized
		operator remains uniformly elliptic.
		
		We next sketch how to represent the flow by a uniformly parabolic
		scalar equation. Fix $t_0\in[0,T)$. By
		Lemma~\ref{lem:interior-ball}, there is a point
		$p\in\Omega_{t_0}$ such that
		\(
		B_{d_1/4}(p)\subset\Omega_t
		\)
		for
		\(
		t\in 
		I_{t_0}:=	[t_0,\min\{t_0+\tau_0,T\}).
		\)
		Since $M_t$ is strictly convex and contains $p$ in its interior, it is
		star-shaped with respect to $p$. 
		
		We may therefore write
		\[
		M_t
		=
		\left\{
		\bigl(\hat r(\theta,t),\theta\bigr):
		\theta\in\mathbb S^n
		\right\},
		\]
		where $\hat r$ denotes the spherical distance from $p$. Set
		\(
		\hat v
		:=
		\sqrt{
			1+
			\frac{|D\hat r|_{\sigma}^2}
			{\phi^2(\hat r)}
		}.
		\)
		Here $D$ and $\sigma$ denote the covariant derivative and the round
		metric on $\mathbb S^n$, respectively. The outward unit normal of the
		radial graph is
		\[
		\nu
		=
		\frac{1}{\hat v}
		\left(
		\partial_{\hat r}
		-
		\frac{\hat \sigma^{ij} \partial_j r}{\phi^2(\hat r)}\partial_i
		\right), \quad \text{and hence } \quad 
		\langle\nu,\partial_{\hat r}\rangle
		=
		\frac{1}{\hat v}.
		\]

		After composing the embeddings with a suitable family of
		diffeomorphisms of $\mathbb S^n$, the tangential component of the
		velocity may be removed. The normal component of
		$\partial_t x=\partial_t\hat r\,\partial_{\hat r}$ is then
		$\partial_t\hat r/\hat v$. Consequently, the radial function satisfies
		\begin{equation}\label{eq:radial-flow}
		\partial_t\hat r
		=
		\hat v\bigl(\mu(t)\phi'(r)-F\bigr).
		\end{equation}
		Notice that $\hat r$ is the distance from the auxiliary point $p$,
		whereas $r$ is the distance from the fixed origin $\mathcal O$ used in the
		definition of the flow. Thus $\phi'(r)$ in \eqref{eq:radial-flow} is a smooth function of
		$(\theta,\hat r)$ through the corresponding point of the sphere. Its
		contribution is multiplied by the graph factor $\hat v$, which
		depends on $D\hat r$, but the forcing term does not involve
		$D^2\hat r$ and hence does not affect the principal part.

		The induced metric of the radial graph is
		\(
		g_{ij}
		=
		\hat r_i\hat r_j+\phi^2(\hat r)\sigma_{ij},
		\)
		while its second fundamental form is
		\[
		h_{ij}
		=
		\frac{1}{\hat v}
		\left(
		-D_iD_j\hat r
		+
		\phi(\hat r)\phi'(\hat r)\sigma_{ij}
		+
		2\frac{\phi'(\hat r)}{\phi(\hat r)}
		\hat r_i\hat r_j
		\right),
		\]
		with the sign convention used throughout the paper. Therefore, the
		principal part of \eqref{eq:radial-flow} is obtained by composing
		$F$ with the Hessian $D^2\hat r$.
		
		The inclusion $B_{d_1/4}(p)\subset\Omega_t$, together with the outer
		radius estimate, gives uniform upper and lower bounds for $\hat r$.
		The support-function estimate
		\[
		\hat u_p
		=
		\left\langle
		\phi(\hat r)\partial_{\hat r},\nu
		\right\rangle
		=
		\frac{\phi(\hat r)}{\hat v}
		\geq 2d_3
		\]
		also gives a uniform bound for $\hat v$ and hence for
		$|D\hat r|_\sigma$. Thus the radial graphs remain uniformly
		star-shaped on every interval $I_{t_0}$.
		
		Since the curvature ratios stay in a compact subset of
		$\Gamma_+$, the ellipticity constants of $F$ are uniformly bounded
		from above and away from zero. Together with the uniform bounds for
		$\hat r$, $D\hat r$, $A$ and $\mu(t)$, this shows that
		\eqref{eq:radial-flow} is a uniformly parabolic fully nonlinear
		equation on every interval $I_{t_0}$, with constants independent of
		$t_0$.
		
		For fixed $(\theta,\hat r,D\hat r)$, the Weingarten operator depends
		affinely on $D^2\hat r$, with principal part
		$-\hat v^{-1}D^2\hat r$. Since $F$ is concave in the Weingarten
		operator, the second-order operator $-\hat vF$ appearing on the
		right-hand side of \eqref{eq:radial-flow} is convex in the Hessian.
		The non-local forcing term is of lower order and does not affect
		either this structural property or uniform parabolicity. Hence, the
		interior estimates of \cite{TianWang} now give a parabolic
		$C^{2,\alpha}$-estimate for $\hat r$ on every smaller
		time interval. In particular,
		its constant  is independent of $t_0$ and $T$. 
		
		To continue with the classical Schauder estimates on the fixed finite
		interval $[0,T)$, we next show that the curvature
		vectors stay in a
		compact subset of $\Gamma_+$, the compact set obtained here is allowed
		to depend on $T$. 
		
		The evolution equation for $\phi'$, together with the pinching
		estimate and the uniform bound for $\mu$, implies by the parabolic
		minimum principle that $\phi'\geq c_T>0$
		on the finite interval $[0,T)$. Indeed, the coefficient in the evolution equation for $\phi'$ is
		bounded from below on $[0,T)$, and hence the claimed estimate follows
		by applying the minimum principle to $e^{Ct}\phi'$.
		Using $u\leq\phi$,
		$\phi^2+(\phi')^2=1$, and horo-convexity, we obtain
		\[
		\kappa_i
		\geq
		\frac{1-u}{\phi'}
		\geq
		1-\sqrt{1-c_T^2}
		=:\delta_T>0.
		\]
		Together with the upper curvature estimate, this places the curvature
		vectors
		in a compact set $K_T\Subset\Gamma_+$. Consequently, the derivatives of \(F\) of every order are uniformly bounded on $K_T$; that is, for every
		$q\geq1$,
		\[
		\sup_{\kappa\in K_T}|\dot{F}^{(q)}(\kappa)|<\infty,
		\]
		where $\dot{F}^{(q)}$ is the $q$-th derivative of $F$ at $\kappa$.
		These bounds generally depend on $T$, because $c_T$ and $\delta_T$ do. In short, since $\hat r$ is uniformly bounded in $C^{2,\alpha}$ and $\dot F^{(q)}$ is bounded on $K_T$ for every $q$, the coefficients of the linearized equation are uniformly $C^\alpha$ on each graphical interval.  This is precisely the Hölder
		regularity required for the parabolic Schauder estimates. Iterating
		them yields, for every $m\geq2$,
		\[
		\|\hat r\|_{C^{m,\alpha}}
		\leq C_{m,T};
		\]
		see~\cite[Chapter~4]{Lieberman}. The constants are independent of the
		reference time $t_0$ but may depend on the fixed finite time $T$ through
		$K_T$. Applying these estimates on overlapping graphical intervals gives
		\[
		\sup_{0\leq t<T}\sup_{M_t}|\nabla^mA|
		\leq C_{m,T}
		\qquad\text{for every }m\geq0.
		\]

		In particular, the embeddings $x(t,\cdot)$ converge smoothly, after
		a time-dependent repara\-metrization, to a smooth limiting embedding as
		$t\nearrow T$.  Thus the smooth limiting embedding at $t=T$ is strictly convex and
		the equation remains uniformly parabolic. The denominator defining
		the global term remains positive by
		Lemma~\ref{lem:weighted-curvature-integrals}. The short-time
		existence theorem for fully nonlinear curvature flows can therefore
		be applied with this limiting hypersurface $M_T$ as initial data, extending
		the solution beyond $T$; see
		\cite[Sections~2.5 and~2.6]{Gerhardt2006}. This contradicts the
		maximality of $T$. Therefore,
		\(
		T=\infty.
		\)
		\end{proof}
	
	\subsection{Exponential decay of the pinching deficit}
	
	We next improve the pinching estimate. 
	
	\begin{lemma}\label{lem:improved-pinching}
		There are constants $C>0$ and $\delta>0$ such that the traceless second fundamental form satisfies
		\[
		|\mathring A|\leq Ce^{-\delta t} \qquad \text{for all} \quad t\geq0.
		\]
	\end{lemma}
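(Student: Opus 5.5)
We aim to show that a scale-invariant pinching quantity decays exponentially, using the tensor maximum principle as in Lemma~\ref{lem:curvature-pinching}, but now with a time-dependent pinching parameter. Concretely, we consider
\[
P^{\varepsilon(t)}_{ij}:=h_{ij}-\varepsilon(t)Hg_{ij},\qquad \varepsilon(t):=\frac1n-\beta e^{-\delta t},
\]
with $\beta>0$ small enough that $P^{\varepsilon(0)}\geq0$ on $M_0$. The constant $\delta>0$ will be chosen below. Once $P^{\varepsilon(t)}\ge0$ is known for all $t$, we get $\kappa_1\geq(\frac1n-\beta e^{-\delta t})H$. Combined with $\sum_i(\kappa_i-H/n)=0$ and the uniform bound $H\le C$ of Proposition~\ref{prop:upper-curvature-estimate}, this gives $\kappa_n-\kappa_1\le Cn\beta e^{-\delta t}H\leq Ce^{-\delta t}$. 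Hence $|\mathring A|\le Ce^{-\delta t}$.

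To prove that $P^{\varepsilon(t)}\geq0$ is preserved, we reuse the evolution equation \eqref{eq:P-evolution}, which now carries the extra term $-\varepsilon'(t)Hg_{ij}=-\beta\delta e^{-\delta t}Hg_{ij}$. At a null eigenvector $e_1$ the terms involving $P$ and $P^2$ vanish. The gradient terms are handled exactly as before, via \cite[Theorem~4.1]{AndrewsPinching}, since $F$ is concave and inverse concave. This leaves the zeroth-order terms
\[
\varepsilon\mu\phi'\bigl(|A|^2-\varepsilon H^2\bigr)+2F(1-n\varepsilon)-\beta\delta e^{-\delta t}H .
\]
The first term is nonnegative because $\mu\ge0$. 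The second equals $2Fn\beta e^{-\delta t}$. Since $F\geq n\kappa_1$ and $\kappa_1\ge H/(nC_0)$ by Lemma~\ref{lem:curvature-pinching}, we have $F\ge H/C_0$. The sum is therefore at least $\beta e^{-\delta t}H\bigl(2n/C_0-\delta\bigr)\geq0$ provided $\delta\le 2n/C_0$. The strict version and the usual perturbation argument then give preservation.

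\textbf{Main obstacle.} The hardest step is making sure Andrews' gradient condition still applies to $P=h-\varepsilon Hg$ when $\varepsilon$ is close to $1/n$. In the proof of Lemma~\ref{lem:curvature-pinching} the cited theorem was used for a fixed $\varepsilon\le1/n$, and its hypotheses (concavity and inverse concavity of $F$, together with a pinching cone that is preserved under the reaction terms) hold uniformly for every $\varepsilon\in(0,1/n]$. So we first check that the constants in that condition do not degenerate as $\varepsilon\to1/n$. If they do, a fallback is to study the scalar quantity $G=|A|^2/H^2-1/n$, or $G=|\mathring A|^2/F^2$, and derive
\[
(\partial_t-\Delta_{\dot F})G\le 2\dot F^{ij}\nabla_i\log F\,\nabla_jG-cG .
\]
Here the reaction term $2F\,\mathrm{tr}(\cdot)$ together with the uniform lower bound $F\ge c\kappa_1>0$ would supply $-cG$. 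Two inputs would make this work: a uniform positive lower bound on $\kappa_1$ as $t\to\infty$ (from horo-convexity, $\kappa_1\ge(1-u)/\phi'\geq 1-\phi\geq c$, using the outer radius bound of Lemma~\ref{lem:radius-estimates} to keep $r\le\pi/2-d_2$), and the uniform curvature bounds. The $\mu$-terms, which carry the factor $\phi'$, would be shown to have a favourable sign by the same computation that produced $\varepsilon\mu\phi'(|A|^2-\varepsilon H^2)\geq0$.
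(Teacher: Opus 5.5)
Your argument is the paper's proof: the same tensor $h_{ij}-\varepsilon(t)Hg_{ij}$ with $\varepsilon(t)=\frac1n-\beta e^{-\delta t}$ (the paper takes $\beta=\frac1n-\varepsilon_0$, $\varepsilon_0=\min_{M_0}\kappa_1/H$, and $\delta\le n^2\varepsilon_0$), the same appeal to Andrews' gradient condition, nonnegativity of $2F(1-n\varepsilon)-\varepsilon'H$ via $F\ge n\kappa_1$ and the earlier pinching, and the bound $H\le C$ at the end. Two small corrections: first, $\beta$ must be large rather than small, since $P^{\varepsilon(0)}\ge0$ means $\kappa_1\ge(\frac1n-\beta)H$ on $M_0$, i.e.\ $\beta\ge\frac1n-\varepsilon_0$; second, the fallback is unnecessary, because Andrews' condition is only checked at one time with $\varepsilon(t)<\frac1n$ fixed, and that is fortunate since the fallback's claim that the outer-radius bound gives $r\le\frac\pi2-d_2$ is false (the optimal outer ball need not be centered at $\mathcal O$).
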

	
	\begin{proof}
		Let
		\(
		\varepsilon_0
		:=
		\min_{M_0}\frac{\kappa_1}{H}>0
		\)
		and define
		\(
		\varepsilon(t)
		:=
		\frac1n-
		\left(\frac1n-\varepsilon_0\right)e^{-\delta t}.
		\)
		Consider the tensor
		\[
		P_{ij}:=h_{ij}-\varepsilon(t)Hg_{ij}.
		\]
		At $t=0$, we have $P_{ij}\geq0$.
		
		Its evolution equation is the same as in the proof of
		Lemma~\ref{lem:curvature-pinching}, with $\varepsilon$ replaced by
		$\varepsilon(t)$ and with the additional term
		$-\varepsilon'(t)Hg_{ij}$.  At a null eigenvector of $P_{ij}$, the
		terms containing $P_{ij}$ or $P^2_{ij}$ vanish. The gradient terms
		satisfy the null-eigenvector condition by
		\cite[Theorems~3.2 and~4.1]{AndrewsPinching}.
		
		It remains to estimate the 	zeroth-order terms. First, as
		$0<\varepsilon_0\leq\frac1{n}$ and thus
		$0<\varepsilon(t)\leq\frac1n$, we get
		
		\[
		|A|^2-\varepsilon(t)H^2
		\geq
		\left(\frac1n-\varepsilon(t)\right)H^2
		\geq0.
		\]
		Since $\mu(t)\geq0$ and $\phi'>0$, it follows that
		the coefficient of $g_{ij}$  is bounded from below by
		\[
		2F(1-n\varepsilon(t))-\varepsilon'(t)H = 	\bigg(\frac1n-\varepsilon_0\bigg)e^{-\delta t}
		(2nF-\delta H).
		\]
		
		The pinching estimate already proved gives
		$\kappa_1\geq\varepsilon_0H$. By monotonicity, homogeneity and
		normalization of $F$,
		\[
		F\geq F(\kappa_1,\ldots,\kappa_1)=n\kappa_1 \geq n \varepsilon_0 H.
		\]
		Choosing $0<\delta\leq n^2\varepsilon_0$, we reach
		\[
		2nF-\delta H
		\geq
		(2n^2\varepsilon_0-\delta)H
		\geq0.
		\]
		Consequently, the complete coefficient of $g_{ij}$ in
		the evolution of $P_{ij}$ is nonnegative.

		Therefore,  the tensor maximum principle gives
		\(h_{ij}\geq\varepsilon(t)Hg_{ij}\).
		Consequently,
		\[
		\frac{\kappa_1}{H}
		\geq
		\frac1n-
		\left(\frac1n-\varepsilon_0\right)e^{-\delta t}. \quad \text{Hence} \quad \frac{H}{n}-\kappa_1
		\leq
		\left(\frac1n-\varepsilon_0\right)e^{-\delta t}H.
		\]
		Since
		\(
		\left|\kappa_i-\frac Hn\right|
		\leq\kappa_n-\kappa_1 \leq \sum_{i = 1}^n (\kappa_i -\kappa_1)
		= H-n\kappa_1,
		\)
		it follows that
		\[
		\begin{aligned}
		|\mathring A|^2
		&=
		\sum_{i=1}^n
		\left(\kappa_i-\frac{H}{n}\right)^2\leq
		n^3
		\left(\frac1n-\varepsilon_0\right)^2
		e^{-2\delta t}H^2,
		\end{aligned}
		\]
		and the conclusion follows using the upper curvature estimate $H\leq C$ from Proposition \ref{prop:upper-curvature-estimate}.
	\end{proof}

	The exponential decay of the traceless second fundamental form gives
	a quantitative estimate for the distance of $M_t$ from the family of
	geodesic spheres. 
	\begin{cor}[Quantitative spherical closeness]
		\label{cor:quantitative-spherical-closeness}
		There are geodesic spheres
		$S_{\rho(t)}(q(t))\subset\mathbb S^{n+1}$ and constants
		$C,\delta>0$ such that
		\begin{align}\label{Hd-es}
		d_{\mathcal H}
		\bigl(M_t,S_{\rho(t)}(q(t))\bigr)
		\leq
		Ce^{-\delta t}
		\end{align}
		for all sufficiently large $t$. Moreover, after increasing the
		initial time if necessary, $M_t$ is a normal graph over
		$S_{\rho(t)}(q(t))$, and the corresponding graph functions converge
		to zero exponentially in $C^2$.
	\end{cor}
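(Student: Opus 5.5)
The plan is to use a quantitative almost-umbilical rigidity theorem for closed hypersurfaces in the sphere. The inputs are Lemma~\ref{lem:improved-pinching}, which gives $|\mathring A|\leq Ce^{-\delta t}$, and the uniform bounds $|A|\leq C$ from Proposition~\ref{prop:upper-curvature-estimate}. Lemma~\ref{lem:radius-estimates} gives $d_1\leq\rho_\pm(\Omega_t)\leq\pi/2-d_2$, so the hypersurfaces stay uniformly noncollapsed, with area bounded above and below and uniform lower and upper curvature bounds. Under these conditions a De~Lellis--M\"uller/Perez type estimate, or more elementarily the Codazzi identity, shows that the hypersurface is close to a geodesic sphere, with the distance controlled by $\|\mathring A\|_{C^0}$. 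Since here we already have the pointwise $C^0$ decay of $\mathring A$, I would argue directly rather than through $L^2$ rigidity.

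\textbf{Step 1: control $\nabla H$.} The contracted Codazzi identity in a space form gives $\nabla_j\mathring h_i^j=\frac{n-1}{n}\nabla_iH$, so $\nabla H$ is controlled by $\nabla\mathring A$. To get pointwise decay of $\nabla\mathring A$, I would interpolate between $|\mathring A|\leq Ce^{-\delta t}$ and a uniform bound on $|\nabla^2 A|$ (or on any higher derivative). Such bounds hold uniformly in time by the argument of Proposition~\ref{prop:long-time-existence}. There, however, $c_T$ depended on $T$; the lower bound on $\kappa_1$ is in fact uniform, by the pinching estimate together with $F\geq c$, since the inner radius is bounded below. Hence the Schauder bounds are global. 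Interpolation then gives $|\nabla\mathring A|+|\nabla H|\leq Ce^{-\delta' t}$.

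\textbf{Step 2: identify the sphere.} Let $\bar H(t)$ be the average of $H$ over $M_t$. Then $|H-\bar H|\leq C\,\mathrm{diam}(M_t)\sup|\nabla H|\leq Ce^{-\delta' t}$, so $|h_{ij}-\frac{\bar H}{n}g_{ij}|\leq Ce^{-\delta' t}$. Set $\rho(t):=\operatorname{arccot}(\bar H/n)$, which lies in a compact subinterval of $(0,\pi/2)$ by the curvature and radius bounds. Consider the map $q:=\cos\rho\,x+\sin\rho\,\nu$ into $\mathbb R^{n+2}$, with the sign chosen according to the orientation so that $q$ is the center of the osculating sphere. Differentiating along $M_t$ gives $dq=(\cos\rho\, g-\sin\rho\, h)$ applied to tangent vectors, which is $O(e^{-\delta' t})$. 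Hence $q$ is nearly constant on the connected $M_t$. Choosing $q(t)$ at any base point, every $x\in M_t$ satisfies $\operatorname{dist}(x,q(t))=\rho(t)+O(e^{-\delta' t})$. This proves that $M_t$ lies in an exponentially thin neighborhood of $S_{\rho(t)}(q(t))$.

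\textbf{Step 3: Hausdorff and graph bounds.} The reverse Hausdorff inclusion follows because the nearest-point projection of $M_t$ onto the sphere has degree one: $M_t$ bounds a convex domain containing $q(t)$, as the inner radius is bounded below. This yields \eqref{Hd-es}. For large $t$, the normal along $M_t$ is $O(e^{-\delta' t})$-close to the radial direction from $q(t)$, by the same computation for $\nu$. So $M_t$ is a normal (radial) graph of a function $f_t$ over $S_{\rho(t)}(q(t))$ with $\|f_t\|_{C^1}\leq Ce^{-\delta' t}$. The $C^2$ decay follows by interpolating the $C^1$ decay against the uniform higher-order bounds, or directly from $|h-\cot\rho\, g|\leq Ce^{-\delta' t}$ and the graph formula for $h_{ij}$.

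\textbf{Main obstacle.} The main obstacle is Step~1, namely getting time-uniform higher-derivative bounds so that interpolation gives a rate. If that route proves delicate, the fallback is to cite the quantitative almost-umbilicity theorem in space forms, which requires only $\|\mathring A\|_{L^p}$ small together with bounded area and curvature.
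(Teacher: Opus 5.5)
Your Steps~2 and~3 are fine once you have $\sup_{M_t}|A-\cot\rho(t)\,\mathrm{Id}|\leq Ce^{-\delta' t}$. The center map $q=\cos\rho\,x-\sin\rho\,\nu$ satisfies $dq=\sin\rho\,(\cot\rho\,\mathrm{Id}-A)$, and the inner-radius/degree argument gives the reverse inclusion and the graph property. The gap is Step~1, the only place where the oscillation of $H$ is controlled.

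Interpolating $|\mathring A|\leq Ce^{-\delta t}$ into decay of $\nabla\mathring A$ needs time-uniform bounds on $\nabla^2A$, or at least on $\nabla A$ in a H\"older norm. You justify these by a uniform lower bound on $\kappa_1$ from pinching and ``$F\geq c$, since the inner radius is bounded below''. That bound is not available:
\begin{itemize}
\item A lower bound on $\rho_-$ plus pointwise pinching gives no pointwise lower curvature bound. Locally, a pinched convex hypersurface can look like the graph of $\epsilon|x|^2+|x|^4$: its curvature ratio stays below $3$, while its curvature at the origin is $2\epsilon$.
\item Horo-convexity only gives $\kappa_i\geq(1-u)/\phi'\geq\sqrt{(1-\phi)/(1+\phi)}$. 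This degenerates if $M_t$ approaches the equator, and the paper only has $\phi'\geq c_T$ with a $T$-dependent constant.
\end{itemize}
Since $\dot F$ is homogeneous of degree zero and $\ddot F$ of degree $-1$, without a lower bound on $\kappa_1$ the H\"older norms of the coefficients $\dot F^{ij}(A)$ are not controlled. Schauder theory then gives no time-uniform bound on $\nabla A$. The only time-uniform regularity at this stage is the Tian--Wang $C^{2,\alpha}$ bound on the radial function, i.e.\ $A\in C^\alpha$, and that cannot be interpolated into decay of $\nabla\mathring A$.

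In the paper, the time-uniform higher-order bounds (Corollary~\ref{uniform-est}) are derived \emph{from} the present corollary. Closeness to $S_{\rho(t)}(q(t))$ with $\rho(t)\leq\pi/2-d_2/2$ is exactly what produces $\kappa_i\geq\frac12\tan(d_2/2)$. So your primary route is circular. Note also that Step~2 only needs $\sup|H-\bar H|$ small, not $\nabla H$. Still, extracting that from $\nabla H=\frac{n}{n-1}\operatorname{div}\mathring A$ with only $C^0$ or $C^\alpha$ control of $\mathring A$ requires an elliptic or integral argument, not interpolation.

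The fallback you mention is the paper's actual proof, and it carries the argument because an $L^p$ almost-umbilicity theorem uses the Codazzi equation in integrated form and needs no derivative bounds. Its hypotheses, however, are not merely ``$\|\mathring A\|_{L^p}$ small with bounded area and curvature''. The paper applies Roth--Scheuer's Theorem~3.1 and Corollary~3.3 after stereographic projection, with conformal factor $e^\psi=1+\phi'$, so $\|\psi\|_{L^\infty}\leq\log 2$. There, smallness is measured relative to the Euclidean mean curvature: $\|\mathring A\|_{L^p(M_t)}\leq\varepsilon_0\|\widetilde H\|_{L^p(\widetilde M_t)}$.

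Horo-convexity enters through the lower bound on $\|\widetilde H\|_{L^p}$, via $\widetilde H=(1+\phi')H+nu\geq H+n\geq n$. Together with $\|\widetilde A\|_{L^p}\leq C$, this gives:
\begin{itemize}
\item the Hausdorff rate $e^{-\alpha\delta t}$;
\item the graph representation with $v-1\leq Ce^{-\delta t}$, hence $C^1$ decay of $f_t$;
\item $C^2$ decay, by interpolating against the uniform $C^{2,\alpha}$ bound.
\end{itemize}
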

	
	\begin{proof}
		Fix $p>n$. We apply
		\cite[Theorem~3.1 and Corollary~3.3]
		{RothScheuerAlmostUmbilical} in stereographic coordinates from the south pole to identify the northern
		hemisphere with the Euclidean unit ball and write the round metric
		as
		\[
		\bar g=e^{2\psi}\widetilde g,
		\qquad
		e^\psi=\frac{2}{1+|x|^2}=1+\phi'.
		\]
		In particular,
		$
		\|\psi\|_{L^\infty}\leq\log2.
		$
		
		Let $\widetilde A$ and $\widetilde H$ denote the Euclidean second
		fundamental form and the Euclidean mean curvature,
		respectively. The conformal transformation law gives
		\[
		\widetilde H
		=
		e^\psi H-n\widetilde\nu(\psi)
		=
		(1+\phi') H+nu.
		\]
		Since $M_t$ is strictly horo-convex, we have 
		\(
		\phi' H\geq n(1-u)
		\)
		and therefore 
		$$\widetilde H\geq H+n\geq n.$$
		Since the conformal factor is bounded from above and away from zero,
		the Euclidean and spherical volume forms, and hence the corresponding
		$L^p$-norms, are uniformly equivalent. Using Lemma~\ref{lem:weighted-curvature-integrals}, the upper area
		bound and Hölder's inequality, we obtain
		\[
		\|\widetilde H\|_{L^p(\widetilde M_t)}
		\geq c_p>0.
		\]
		Now the conformal transformation law for the principal curvatures is
		$
		\widetilde\kappa_i
		=
		(1+\phi')\kappa_i+u.
		$
		Thus the stereographic images are strictly convex, and the uniform
		curvature and area estimates give
		\[
		\|\widetilde A\|_{L^p(\widetilde M_t)}
		\leq C.
		\]
		All other quantities on which the constants in the cited results
		depend are uniformly controlled.
		
		In turn, Lemma~\ref{lem:improved-pinching} ensures that
		$
		\|\mathring A\|_{L^p(M_t)}
		\leq
		Ce^{-\delta t}.
		$
		Consequently, for all sufficiently large $t$, the smallness
		condition
		\[
		\|\mathring A\|_{L^p(M_t)}
		\leq
		\varepsilon_0
		\|\widetilde H\|_{L^p(\widetilde M_t)}
		\]           
		in \cite[Theorem~3.1]{RothScheuerAlmostUmbilical} is satisfied.
		Therefore, there is a geodesic sphere
		$S_{\rho(t)}(q(t))$ such that
		\[
		d_{\mathcal H}
		\bigl(M_t,S_{\rho(t)}(q(t))\bigr)
		\leq
		C
		\frac{
			\|\mathring A\|_{L^p(M_t)}^\alpha
		}{
			\|\widetilde H\|_{L^p(\widetilde M_t)}^\alpha
		}
		\leq
		Ce^{-\alpha\delta t}.
		\]
		After replacing $\alpha\delta$ by $\delta$, this proves the Hausdorff
		estimate. 
		
		Finally, strict convexity and
		\cite[Corollary~3.3]{RothScheuerAlmostUmbilical} provide the normal
		graph representation and control of the graph factor. More precisely, \cite[Corollary~3.3]{RothScheuerAlmostUmbilical}
		gives
		$
		v\leq \exp\bigl(Ce^{-\delta t}\bigr).
		$
		Since $e^s-1\leq Cs$ for small $s\geq0$, it follows that
		\[
		v-1\leq Ce^{-\delta t}
		\]
		for all sufficiently large $t$. Since
		$
		v^2=1+\bar g^{ij}\partial_i f_t \partial_j f_t,
		$
		this implies exponential decay of $Df_t$, possibly after decreasing
		the decay rate. Together with the Hausdorff estimate, which controls
		$\|f_t\|_{C^0}$, we obtain
		\[
		\|f_t\|_{C^1}\leq Ce^{-\delta' t}  \qquad \text{	for some \quad $\delta'>0$.}
		\]
		Combining this estimate with the uniform $C^{2,\alpha}$-bounds and interpolation yields exponential convergence of the graph functions $f_t$ in $C^2$. 
	\end{proof}
	
	Now, we are prepared to show that the solution hypersurface $M_t$ of the flow \eqref{eq:flow} with strictly 
	horo-convex initial hypersurface $M_0$ has uniform higher-order estimates which are time independent. Moreover, the graph functions above converge to zero exponentially in any order.
	\begin{cor}\label{uniform-est}
		Let $M_t$ be the smooth solution of \eqref{eq:flow} with strictly 
		horo-convex initial hypersurface $M_0$. Then, for every $m\geq0$, there is a constant $C_m>0$, depending only on the initial hypersurface and the preserved quermassintegral, such that 
		\[\sup_{t\geq0}\sup_{M_t}\vert\nabla^mA\vert\leq C_m.\]
		Moreover, for sufficiently large time $t$ and every integer $k\geq0$, the corresponding graph functions $f_t$ in Corollary \ref{cor:quantitative-spherical-closeness} satisfy 
		\begin{align*}
		\vert f_t\vert_{C^k}\leq Ce^{-\frac{\delta}{2} t},
		\end{align*}
		where $\delta$ is the positive constant from Corollary~\ref{cor:quantitative-spherical-closeness}.
	\end{cor}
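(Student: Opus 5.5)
The argument has two parts: time-independent derivative bounds, then interpolation against the exponential $C^1$ decay from Corollary~\ref{cor:quantitative-spherical-closeness}.

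\emph{Uniform bounds.} The point is to make the $T$-dependent constants in Proposition~\ref{prop:long-time-existence} time-independent. The only $T$-dependence there entered through the lower bound $\phi'\geq c_T$, which was used to place the curvature vectors in a compact set $K_T\Subset\Gamma_+$. Two global facts remove this. First, Lemma~\ref{lem:radius-estimates} gives $\rho_+(\Omega_t)\le \pi/2-d_2$. Second, $\Omega_t\subset\mathbb S^{n+1}_+$ and $\Omega_t$ has inradius $\ge d_1$. I would combine these to show that $M_t$ stays a uniform distance from the equator, so that $\phi'\ge c>0$ for all $t$.

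If that geometric step is delicate (a ball of radius $\pi/2-d_2$ could in principle reach the equator), I would instead use the following. By Lemma~\ref{lem:weighted-curvature-integrals}, Lemma~\ref{lem:improved-pinching} and Corollary~\ref{cor:quantitative-spherical-closeness}, $M_t$ is close to a sphere $S_{\rho(t)}(q(t))$ with $\rho(t)$ pinned by $W_\ell$. The lower bound $\int\phi'\sigma_\ell\ge c_\ell$, together with near-umbilicity ($\kappa_1\ge \frac{H}{n}-Ce^{-\delta t}$ and $H$ bounded below by the outer radius bound), then gives a uniform positive lower bound on $\kappa_1$ for large $t$. On the initial finite interval, the bound from Proposition~\ref{prop:long-time-existence} suffices.

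Either way, the curvature vectors then lie in a fixed compact $K\Subset\Gamma_+$ for all $t$. The derivatives of $F$ of every order are bounded on $K$. Repeating the radial-graph argument of Proposition~\ref{prop:long-time-existence} on the intervals $I_{t_0}$ of fixed length $\tau_0$ gives the following chain of estimates, each with constants independent of $t_0$:
\begin{itemize}
\item uniform $C^1$ bounds from Lemmas~\ref{lem:interior-ball} and \ref{lem:support-lower-bound};
\item $C^{2,\alpha}$ bounds from the estimate of \cite{TianWang}, using the concavity of $F$;
\item Schauder iteration on $[t_0+\tau_0/4,t_0+\tau_0)$, with interior-in-time estimates starting at $t_0+\tau_0/4$.
\end{itemize}
The resulting bounds $\sup|\nabla^mA|\le C_m$ hold uniformly in $t$. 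I expect this step, the uniform compactness in $\Gamma_+$, to be the main obstacle.

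\emph{Exponential decay.} Write $M_t$ as a normal graph $f_t$ over $S_{\rho(t)}(q(t))$ for large $t$. The uniform bounds on $\nabla^mA$ and on the graph factor $v$ translate into $\|f_t\|_{C^{m}}\le C_m$ for every $m$. Corollary~\ref{cor:quantitative-spherical-closeness} gives $\|f_t\|_{C^1}\le Ce^{-\delta t}$, after renaming the rate. For each $k$, I would apply the Gagliardo--Nirenberg/Hölder interpolation
\[
\|f_t\|_{C^k}\le C\|f_t\|_{C^0}^{1/2}\|f_t\|_{C^{2k}}^{1/2},
\]
with $C^0$ decaying like $e^{-\delta t}$ and $C^{2k}$ uniformly bounded. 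This yields $\|f_t\|_{C^k}\le Ce^{-\frac{\delta}{2}t}$, as claimed.
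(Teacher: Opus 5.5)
Your proposal follows the paper's proof: a uniform lower curvature bound for large $t$ from the closeness of $M_t$ to $S_{\rho(t)}(q(t))$, with the finite-time bounds of Proposition~\ref{prop:long-time-existence} covering the initial interval; then Schauder estimates; then exactly the interpolation $\|f_t\|_{C^k}\le C\|f_t\|_{C^0}^{1/2}\|f_t\|_{C^{2k}}^{1/2}$. You were right to drop option (a), since geodesic spheres inside the hemisphere are horo-convex and can approach the equator, and in option (b) the pointwise bound should come from the $C^2$-part of Corollary~\ref{cor:quantitative-spherical-closeness}, namely $|h_i^j-\cot\rho(t)\,\delta_i^j|\le Ce^{-\delta t}$ with $\rho(t)\le\frac{\pi}{2}-\frac{d_2}{2}$, because neither $\int\phi'\sigma_\ell\ge c_\ell$ nor the outer radius bound alone controls $H$ pointwise.
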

	\begin{proof}
		First, from the Hausdorff-distance estimate \eqref{Hd-es} and Lemma \ref{lem:radius-estimates}, for all sufficiently large $t$ we have 
		\begin{align}\label{est-rho}
		0<\frac{d_1}{2}\leq\rho(t)\leq\frac{\pi}{2}-\frac{d_2}{2}.
		\end{align}
		Since $f_t$ converge to zero exponentially in $C^2$, there holds
		\begin{align}\label{decay pf curavture}
		\vert h^i_{j}-\cot\rho(t)\delta^i_{j}\vert\leq C\| f_t\|_{C^2}\leq Ce^{-\delta t}.
		\end{align}
		Combining this with \eqref{est-rho}, for sufficiently large $t$, it follows that
		\begin{align*}
		\kappa_i(x,t)\geq\frac{1}{2}\tan\frac{d_2}{2}>0, \quad \forall x\in M_t.
		\end{align*}
		Therefore, together with the uniform upper curvature estimate, we obtain that the curvature vectors stay in a fixed compact subset of $\Gamma_+$. Now, the standard Schauder estimates give 
		\[\sup_{t\geq0}\sup_{M_t}\vert\nabla^mA\vert\leq C_m \]
		for every $m\geq0$, where $C_m>0$ is time independent. Thus,  after transferring
		the estimates to the uniformly controlled graph coordinates, we have
		\(
		\|f_t\|_{C^m}\leq C_m.
		\) Finally, using interpolation and the decay of $\|f_t\|_{C^0}$, we obtain
		\[
		\|f_t\|_{C^k}\leq C\|f_t\|^{1-\frac{k}{2k}}_{C_0}\|f_t\|^{\frac{1}{2}}_{C^{2k}}\leq C_ke^{-\frac{\delta}{2} t}, \]
where $D_-$ denotes the lower Dini derivative.		
		for every $k\geq0$.
	\end{proof}

	\subsection{Convergence to a sphere} \label{subsec:convergence-to-sphere} We conclude the proof by identifying the limiting sphere and establishing the rate of convergence. We first adapt the argument of \cite[Section~7]{CabezasRivasScheuer2024} to show that every subsequential limiting sphere is centered at the fixed origin. Since we do not assume that $\mathcal O \in \Omega_t$, we work at a maximum point of the radial distance instead of using a global radial parametrization with respect to $\mathcal O$. This gives smooth convergence to $S_{r_\infty}(\mathcal O)$. The exponential rate, which is not obtained in \cite[Section~7]{CabezasRivasScheuer2024}, is then proved separately by writing the hypersurfaces as radial graphs over $S_{r_\infty}(\mathcal O)$ and applying a maximum-principle argument together with interpolation.
	
	\begin{thm}[Exponential convergence] \label{thm:exponential-convergence} There is a unique radius $r_\infty\in(0,\pi/2)$ satisfying $W_\ell(B_{r_\infty}(\mathcal O)) = W_\ell(\Omega_0)$ such that $M_t$ converges exponentially in $C^\infty$ to the geodesic sphere $S_{r_\infty}(\mathcal O)$ centered at the fixed origin. More precisely, for all sufficiently large $t$, the hypersurface $M_t$ can be written as a radial graph over $S_{r_\infty}(\mathcal O)$, \[ M_t = \left\{ \bigl(r_\infty+f(\theta,t),\theta\bigr): \theta\in\mathbb S^n \right\}, \] and, for every integer $m\geq0$, there are constants $C_m,\delta_m>0$ such that \[ \|f(\cdot,t)\|_{C^m(\mathbb S^n)} \leq C_m e^{-\delta_m t}. \] \end{thm}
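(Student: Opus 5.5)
The argument has three stages, following the outline announced at the beginning of Subsection~\ref{subsec:convergence-to-sphere}.

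\emph{Step 1: subsequential limits.} By Corollary~\ref{cor:quantitative-spherical-closeness}, the estimate \eqref{est-rho} and Corollary~\ref{uniform-est}, every sequence $t_k\to\infty$ has a subsequence along which $q(t_k)\to q_\infty$, $\rho(t_k)\to\rho_\infty\in[d_1/2,\pi/2-d_2/2]$, and $M_{t_k}\to S_{\rho_\infty}(q_\infty)$ smoothly. Since $W_\ell$ is preserved and is continuous under smooth convergence, and since $W_\ell$ of a geodesic ball is strictly increasing in the radius, the limit radius satisfies $W_\ell(B_{\rho_\infty})=W_\ell(\Omega_0)$. This forces $\rho_\infty=r_\infty$ independently of the subsequence, which gives existence and uniqueness of $r_\infty$. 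Because $\rho(t)\to r_\infty$ and $|A-\cot\rho(t)\,\mathrm{id}|\le Ce^{-\delta t}$, we get $F\to n\cot r_\infty$ uniformly. Moreover, $\mu(t)\to n\cot r_\infty/\cos r_\infty$ along the subsequence once $q_\infty$ is known to be $\mathcal O$.

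\emph{Step 2: identify the center.} Set $\bar r(t):=\max_{M_t} r$. At a maximum point $\xi_t$ the normal is $\nu=\partial_r$, so $u=\phi(\bar r)$, and Hamilton's trick gives
\[
D^+\bar r\le \mu(t)\cos\bar r-F(\xi_t,t).
\]
Do the same at a minimum point of $r$ when $\mathcal O\in\Omega_t$. If instead $\mathcal O\notin\Omega_t$, use the point of $M_t$ nearest to $\mathcal O$, where $\nu=-\partial_r$. Suppose some subsequential limit sphere $S_{r_\infty}(q_\infty)$ has $q_\infty\ne\mathcal O$. Using \eqref{def-mu} together with the asymptotic constancy of $F$, we get
\[
\mu\approx \frac{n\cot r_\infty\int\sigma_\ell}{\int\phi'\sigma_\ell},
\]
and on the limit sphere $\sigma_\ell$ is constant, so the comparison reduces to $\cos\bar r$ versus the average of $\phi'$. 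On a noncentered sphere, $\cos\bar r$ at the farthest point lies strictly below the average of $\phi'$. Hence $\mu\cos\bar r-F\le -c<0$ on a time interval of uniform length around the times $t_k$ (by smooth closeness), and $\bar r$ strictly decreases there. I would combine this with a Lyapunov-type quantity, namely $\bar r(t)$ itself: it is eventually nonincreasing up to $Ce^{-\delta t}$ errors, because for any nearly round sphere the farthest point has $\phi'\le$ average. Then $\bar r$ converges. A strict drop of size $c\tau$ occurring infinitely often contradicts convergence, so $q_\infty=\mathcal O$. Since every subsequence has the same limit, $M_t\to S_{r_\infty}(\mathcal O)$ smoothly. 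I expect this step to be the main obstacle, in particular making the monotonicity of $\bar r$ quantitative with summable errors.

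\emph{Step 3: exponential rate.} For large $t$, $M_t$ is a radial graph $r=r_\infty+f$ about $\mathcal O$ with $f$ small in $C^2$. It satisfies
\[
\partial_t f=v\bigl(\mu\cos(r_\infty+f)-F\bigr).
\]
Linearize at $f=0$. The linearized operator is $\dot F$-elliptic with zero-order part obtained from $-n\cot r\cdot v+\mu\cos r$; its $r$-derivative is
\[
\frac{n}{\sin^2 r}-\mu\sin r,
\]
which together with the Jacobi term $-\dot F^{ij}(\cdot)$ from $h$ of spheres gives a contracting zero-order coefficient $-\lambda<0$. The constraint $W_\ell=\mathrm{const}$ fixes the mode that $\mu$ could otherwise sustain. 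Concretely, apply the maximum principle to $\max f$ and $-\min f$. At a maximum of $f$ the gradient vanishes, so
\[
\partial_t\max f\le \mu\cos(r_\infty+\max f)-n\cot(r_\infty+\max f)+O(|f|^2)+\text{(Hessian term}\le 0).
\]
Analogously for the minimum. Subtracting the two bounds gives
\[
\tfrac{d}{dt}\operatorname{osc} f\le -\lambda \operatorname{osc} f+Ce^{-\delta t}\operatorname{osc} f,
\]
since $\cos r-\cos r'$ and $\cot r-\cot r'$ are strictly monotone in the right direction and the $\mu$ terms cancel. This yields exponential decay of $\operatorname{osc} f$. Preservation of $W_\ell$ then forces $f$ to change sign, so $\|f\|_{C^0}\le\operatorname{osc} f$. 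Finally, interpolation with the uniform bounds of Corollary~\ref{uniform-est} gives $\|f\|_{C^m}\le C_m e^{-\delta_m t}$.
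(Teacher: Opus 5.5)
The gap is in Step 3. Steps 1 and 2 are sound. Step 2 spells out the argument the paper defers to Section~7 of Cabezas-Rivas--Scheuer. The summable error you worry about comes for free: $|A-\cot\rho(t)\,\mathrm{id}|\le Ce^{-\delta t}$ gives $F=n\cot\rho(t)+O(e^{-\delta t})$ and $\mu=n\cot\rho(t)/\fint_{M_t}\phi'+O(e^{-\delta t})$ uniformly. Since $\min_{M_t}\phi'\le\fint_{M_t}\phi'$ holds for every hypersurface, you get $D^+\bar r\le Ce^{-\delta t}$.

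\textbf{Why Step 3 fails.} At a maximum of $f$ you only know the sign of the Hessian, which yields $F\ge n\cot(r_\infty+f_{\max})$. The resulting zero-order coefficient is exactly the derivative you computed, $\frac{n}{\sin^2 r_\infty}-\mu\sin r_\infty$. Since $\mu\to n/\sin r_\infty$, this tends to $n\cot^2 r_\infty>0$, so it is expanding, not contracting. In the difference of the two extremal bounds the $\mu$ terms do not cancel; they contribute $\approx -n\operatorname{osc} f$. Meanwhile $-n[\cot(r_\infty+f_{\max})-\cot(r_\infty+f_{\min})]\approx +\frac{n}{\sin^2 r_\infty}\operatorname{osc} f$, because $-n\cot r$ is increasing in $r$. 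So your argument only yields $D^+\operatorname{osc} f\le (n\cot^2 r_\infty+o(1))\operatorname{osc} f$, which proves nothing.

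With $\mu$ frozen, the linearization is $\sin^{-2} r_\infty(\Delta_{\mathbb S^n}+n)f-nf$. The expanding term $n/\sin^2 r_\infty$ is compensated by the Laplacian on nonconstant modes (exactly so on first harmonics), and this is precisely what ``Hessian term $\le 0$'' throws away. A test case: for a slightly off-center sphere $S_{r_\infty}(q)$, the true value at the farthest point is $F=n\cot r_\infty$. This exceeds your lower bound by $\approx n f_{\max}/\sin^2 r_\infty$, and the true rate there is $-nf_{\max}$.

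\textbf{How to repair it.} Do not compare $F$ with centered spheres. Instead use that $F$ is already exponentially close to a constant, uniformly on $M_t$. Then at extremal points the only $f$-dependence is through $\mu\cos(r_\infty+f)$, which contracts at rate $\mu\sin r_\infty\approx n$.

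The paper does this for $f_{\max}$ and $f_{\min}$ separately. It uses $F=F_\infty+O(e^{-\delta t})$, which rests on $|\rho(t)-r_\infty|\le Ce^{-\delta t}$ from $W_\ell$-preservation. It also uses $\mu=F_\infty/\fint_{M_t}\phi'+O(e^{-\delta t})$. A Taylor expansion of $W_\ell$ over radial graphs gives $\fint f=O(\|f\|_{C^2}^2)$, hence $\fint_{M_t}\phi'=\cos r_\infty+O(\|f\|_{C^2}^2)$. This leads to $D^+f_{\max}\le -nf_{\max}+C\|f\|_{C^2}^2+Ce^{-\delta t}$, and the quadratic term is absorbed by interpolation against the uniform $C^N$ bounds.

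Your oscillation route also works once repaired, and is a bit leaner. You have $F(x_{\max})-F(x_{\min})=O(e^{-\delta t})$ and $\mu\ge\min_{M_t}F\ge c>0$. Together these give $D^+\operatorname{osc} f\le -c\operatorname{osc} f+Ce^{-\delta t}$, with an additive rather than multiplicative error. Your sign-change argument then finishes the $C^0$ bound: $\Omega_t\supset B_{r_\infty+f_{\min}}$ and monotonicity of $W_\ell$ force $f_{\min}\le 0\le f_{\max}$, hence $\|f\|_{C^0}\le\operatorname{osc} f$. This avoids both the mean-value control and the expansion of $\fint\phi'$. The final interpolation step is as you describe.
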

	
	\begin{proof}
		By Proposition~\ref{prop:long-time-existence} and Corollary~\ref{uniform-est}, the solution exists smoothly for all $t\geq0$ and satisfies uniform higher-order estimates. Thus every sequence $t_j\to\infty$ has a subsequence along which $M_{t_j}$ converges smoothly to a closed hypersurface $M_\infty$. Lemma~\ref{lem:improved-pinching} yields that $\mathring A=0$ on $M_\infty$. The radius estimates exclude a totally geodesic limit, and therefore $M_\infty$ is a geodesic sphere of radius in a compact subinterval of $(0,\pi/2)$. As $W_\ell(\Omega_t)$ is preserved, every subsequential limiting sphere has the same radius $r_\infty$, uniquely determined by $W_\ell(B_{r_\infty}) = W_\ell(\Omega_0).$ 
		
		This also implies that the radii $\rho(t)$ of the geodesic spheres $S_{\rho(t)}(q(t))$ from Corollary~\ref{cor:quantitative-spherical-closeness} converge to $r_\infty$ as $t\to\infty$. In fact, this convergence is exponential. Indeed, the exponential $C^2$-closeness gives \[ \left| W_\ell(B_{\rho(t)})-W_\ell(\Omega_t) \right| \leq Ce^{-\delta t}. \] 
		As $W_\ell(\Omega_t) = W_\ell(\Omega_0) = W_\ell(B_{r_\infty})$ and the derivative of $\rho\to W_\ell(B_\rho)$ is bounded away from zero on the compact interval containing the radii $\rho(t)$, we conclude that \[ |\rho(t)-r_\infty| \leq Ce^{-\delta t}. \] Together with the exponential $C^2$-decay of the graph functions, this yields \[ \left| A-\cot r_\infty\,\mathrm{Id} \right| \leq Ce^{-\delta t}. \tag{7.5} \label{eq:weingarten-convergence} \]
		
		Now set $F_\infty := F(\cot r_\infty,\ldots,\cot r_\infty)=n\cot r_\infty$, and $\sigma_{\ell,\infty} := \binom{n}{\ell}\cot^\ell r_\infty.$ Since the curvature vectors remain in a fixed compact subset of $\Gamma_+$, the smoothness of $F$ and $\sigma_\ell$, together with \eqref{eq:weingarten-convergence}, gives \[ F = F_\infty+O(e^{-\delta t}), \qquad \sigma_\ell = \sigma_{\ell,\infty}+O(e^{-\delta t}). \tag{7.6} \label{eq:F-sigma-asymptotics} \] Here and below, the error terms are uniform, and the decay rate may decrease from line to line. Substituting \eqref{eq:F-sigma-asymptotics} into the definition of the global term gives \[ \mu(t) = \frac{F_\infty} {\fint_{M_t}\phi'\,dV_t} + O(e^{-\delta t}), \tag{7.7} \label{eq:mu-asymptotics} \]
		where  we have used the notation $ \fint_{M_t}\phi'\,dV_t := \frac{1}{|M_t|} \int_{M_t}\phi'\,dV_t$. 
		
		It remains to show that every limiting sphere is centered at the fixed origin $\mathcal O$. We follow the argument in \cite[Section~7]{CabezasRivasScheuer2024}, with the following  modifications. Since we do not assume that $\mathcal O \in \Omega_t$, we work at a maximum point of the radial distance. Define \[ R(t):=\max_{M_t}r \] and let $x_t\in M_t$ be a point where this maximum is attained. At $x_t$, the convex hypersurface $M_t$ is supported from the outside by the geodesic sphere $S_{R(t)}(\mathcal O)$, and hence $ \nu(x_t,t)=\partial_r.$ Therefore, using the upper Dini derivative and the flow equation, we obtain \[ D^+R(t) \leq \mu(t)\phi'(x_t,t)-F(x_t,t). \] Combining this inequality with \eqref{eq:F-sigma-asymptotics} and \eqref{eq:mu-asymptotics} yields \[ D^+R(t) \leq F_\infty \left( \frac{\phi'(x_t,t)} {\fint_{M_t}\phi'\,dV_t} -1 \right) + O(e^{-\delta t}). \tag{7.8} \label{eq:max-radius-asymptotics} \] 
		
		As $F_\infty=n\cot r_\infty>0,$ inequality~\eqref{eq:max-radius-asymptotics} has the same form as the asymptotic radial inequality in \cite[Section~7]{CabezasRivasScheuer2024}, up to an exponentially decaying error. The argument there therefore applies without further change to exclude every noncentered geodesic sphere as a subsequential limit. Therefore every subsequential limit is $S_{r_\infty}(\mathcal O)$. 
		
		Taking into account that the family $\{M_t\}_{t\geq0}$ is smoothly precompact and has a unique subsequential limit, the whole flow converges smoothly to $S_{r_\infty}(\mathcal O)$. 
		
		We finally prove that the convergence is exponential, which is a conclusion not esta\-blished in \cite{CabezasRivasScheuer2024}. The argument is a direct maximum-principle argument for the radial graph over the limiting sphere, followed by interpolation. A related strategy, in which exponential decay of a low-order quantity is upgraded by interpolation, is used in \cite[Section~3.4]{MiglioranzaThesis}; for the interpolation step, see also \cite[Lemma~C.2]{SchnurerSmoczyk}.
		
		Set $R:=r_\infty$. Since $M_t$ converges smoothly to $S_R(\mathcal O)$, for all sufficiently large $t$ it can be written as a radial graph \[ M_t = \left\{ \bigl(R+f(\theta,t),\theta\bigr): \theta\in\mathbb S^n \right\}, \] where $f(\cdot,t)\to0$ smoothly. We first control the average of the graph function, which
		corresponds to a variation of the radius of the centered sphere. For
		a sufficiently small function $f$ on $S_R(\mathcal O)$, let $\Omega_f$
		denote the domain bounded by its radial graph and define $\mathcal W(f):=W_\ell(\Omega_f)$, and hence by $W_\ell$-preservation under the flow it holds
		$\mathcal W(f(\cdot,t)) = W_\ell(\Omega_t) = W_\ell(B_R) = \mathcal W(0).
		$ As $\mathcal W$ is smooth in a neighbourhood of the zero graph, using \eqref{eq:first-variation-Wl}
		Taylor's formula gives
		\[
		0
		=
		c_{n,\ell}\sigma_{\ell,\infty}
		\int_{S_R(\mathcal O)}f\,dV_\infty
		+
		O\bigl(\|f\|_{C^2(S_R(\mathcal O))}^2\bigr),
		\]
		from where we reach
		\[
		\bigg|
		\fint_{S_R(O)}f\,dV_\infty
		\bigg|
		\leq
		C\|f\|_{C^2(S_R(O))}^2.
		\tag{7.9}
		\label{eq:mean-graph-control}
		\]
		
		Define
		$f_{\max}(t)
		:=
		\max_{\mathbb S^n}f(\cdot,t)$ and
		$f_{\min}(t)
		:=
		\min_{\mathbb S^n}f(\cdot,t).$ 
		At a spatial maximum or minimum of $f$, its spatial gradient
		vanishes and the radial graph factor is equal to one. Therefore the
		radial evolution equation, \eqref{eq:F-sigma-asymptotics} and \eqref{eq:mu-asymptotics} yield
		\[
		D^+f_{\max}(t)
		\leq
		F_\infty
		\bigg(
		\frac{\cos(R+f_{\max}(t))}
		{\fint_{M_t}\phi'\,dV_t}
		-1
		\bigg)
		+
		Ce^{-\delta t},
		\tag{7.10}
		\label{eq:max-graph-evolution}
		\]
		and
		\[
		D_-f_{\min}(t)
		\geq
		F_\infty
		\bigg(
		\frac{\cos(R+f_{\min}(t))}
		{\fint_{M_t}\phi'\,dV_t}
		-1
		\bigg)
		-
		Ce^{-\delta t},
		\tag{7.11}
		\label{eq:min-graph-evolution}
		\]
		where $D_-$ denotes the lower Dini derivative.
		
		Since $f\to0$ smoothly, expansion of both the integrand and the area
		element of the radial graph yields
		\[
		\fint_{M_t}\phi'\,dV_t
		=
		\cos R
		-
		\sin R
		\fint_{S_R(\mathcal O)}f\,dV_\infty
		+
		O\bigl(\|f\|_{C^1(S_R(\mathcal O))}^2\bigr) =
		\cos R
		+
		O\bigl(\|f\|_{C^2(S_R(\mathcal O))}^2\bigr),
		\tag{7.12}
		\]
		after using \eqref{eq:mean-graph-control}, and enlarging the error term. Taking this into account, as well as $\cos(R+s)
		=
		\cos R-\sin R\,s+O(s^2)$ and $F_\infty\tan R=n$, we can write
		\[
		D^+f_{\max}(t)
		\leq
		-nf_{\max}(t)
		+
		C\|f(\cdot,t)\|_{C^2}^2
		+
		Ce^{-\delta t},
		\tag{7.13}
		\label{eq:max-graph-decay}
		\]
		and
		\[
		D_-f_{\min}(t)
		\geq
		-nf_{\min}(t)
		-
		C\|f(\cdot,t)\|_{C^2}^2
		-
		Ce^{-\delta t}.
		\tag{7.14}
		\label{eq:min-graph-decay}
		\]
		
		Now set
		$\eta(t)
		:=
		\max\bigl\{
		f_{\max}(t),-f_{\min}(t)
		\bigr\}
		=
		\|f(\cdot,t)\|_{C^0}.$
		Since $M_t$ converges smoothly to $S_R(\mathcal O)$ and the graphical
		coordinate changes are uniformly controlled, 
		Corollary~\ref{uniform-est} implies that, for every integer
		$N\geq0$ and all sufficiently large $t$, we have
		\[
		\|f(\cdot,t)\|_{C^N(S_R(\mathcal O))}
		\leq
		C_N.
		\]

		Choose an integer $N>4$. By interpolation, for every $\varepsilon>0$ there exists
		$t_\varepsilon$ such that
		\[
		\begin{aligned}
		\|f(\cdot,t)\|_{C^2}^2
		&\leq
		C
		\|f(\cdot,t)\|_{C^0}^{\,2-4/N}
		\|f(\cdot,t)\|_{C^N}^{\,4/N}\leq
		C\eta(t)^{\,2-4/N} \leq \varepsilon\eta(t)
		\end{aligned}
		\tag{7.15}
		\label{eq:C2-interpolation}
		\]
		for all $t\geq t_\varepsilon$, which follows because
		$
		2-\frac4N>1$
		and $\eta(t)\to0$.

		Choose $\varepsilon<n/2$. From
		\eqref{eq:max-graph-decay} and \eqref{eq:min-graph-decay} we obtain
		\[
		D^+\eta(t)
		\leq
		-\frac n2\eta(t)
		+
		Ce^{-\delta t}
		\]
		for every sufficiently large $t$. The maximum principle now gives
		\[
		\eta(t)
		=
		\|f(\cdot,t)\|_{C^0}
		\leq
		Ce^{-\gamma t} \qquad \text{for every} \, 0<\gamma<
		\min\left\{
		\frac n2,\delta
		\right\}.
		\tag{7.16}
		\label{eq:graph-C0-exponential-decay}
		\]

		It remains to improve the decay to all spatial derivatives. Fix
		$m\geq1$ and choose an integer $N>m$. By interpolation and the
		uniform $C^N$-estimate,
		\[
		\begin{aligned}
		\|f(\cdot,t)\|_{C^m}
		&\leq
		C
		\|f(\cdot,t)\|_{C^0}^{\,1-m/N}
		\|f(\cdot,t)\|_{C^N}^{\,m/N}\leq
		C_m
		e^{-\gamma(1-m/N)t} \leq C_m e^{-\gamma_m t},
		\end{aligned}
		\]
		with $\gamma_m
		:=
		\gamma
		\left(
		1-\frac mN
		\right)>0$.
		Therefore $M_t$ converges exponentially in $C^\infty$ to
		$S_{r_\infty}(\mathcal O)$, as desired.
	\end{proof}
	
	Theorem~\ref{thm:exponential-convergence}, together with Proposition~\ref{prop:long-time-existence}, completes the proof of Theorem~\ref{thm:main}.

    \section*{Declarations}

    \subsection*{Data Availability} This manuscript has no associated data

    \subsection*{Competing Interests} The authors have no competing interests to declare that are relevant to the content of this article.

\end{document}